\theoremstyle{theorem}
\newtheorem{theorem}{Theorem}
\newtheorem{corollary}[theorem]{Corollary}
\newtheorem{lemma}[theorem]{Lemma}
\newtheorem{proposition}[theorem]{Proposition}
\theoremstyle{definition}
\newtheorem{example}[theorem]{Example}
\lstdefinelanguage{Julia}%
  {morekeywords={abstract,break,case,catch,const,continue,do,else,elseif,%
      end,export,false,for,function,immutable,import,importall,if,in,%
      macro,module,otherwise,quote,return,switch,true,try,type,typealias,%
      using,while},%
   sensitive=true,%
   alsoother={$},%
   morecomment=[l]\#,%
   morecomment=[n]{\#=}{=\#},%
   morestring=[s]{"}{"},%
   morestring=[m]{'}{'},%
}[keywords,comments,strings]%
\bfseries\color{black},
\newtcolorbox{revise}{colback=red!5!white,
colframe=red!75!black}
\begin{document}

\title{Deep ReLU Programming}
\author{Peter Hinz\\
	Sara van de Geer
}
\maketitle
\begin{abstract}
	Feed-forward ReLU neural networks partition their input domain into finitely many ``affine regions'' of constant neuron activation pattern and affine behaviour. We analyze their mathematical structure and provide algorithmic primitives for an efficient application of linear programming related techniques for iterative minimization of such non-convex functions. In particular, we propose an extension of the Simplex algorithm which is iterating on induced vertices but, in addition, is able to change its feasible region computationally efficiently to adjacent ``affine regions''. This way, we obtain the Barrodale-Roberts algorithm for LAD regression as a special case, but also are able to train the first layer of neural networks with L1 training loss decreasing in every step.
\end{abstract}

\section{Introduction}
\label{sec:introduction}
\subsection{Feed-forward ReLU neural networks}
A real-valued ReLU feed-forward neural network $f:\mathbb{R}^{n_0}\to \mathbb{R}$ with input layer dimension $n_0\in\mathbb{N}$ is a composition of $L\in\mathbb{N}$ layer transition functions of possibly different output dimensions $n_1,\ldots,n_{L+1}\in\mathbb{N}_+$ with $n_{L+1}=1$ and a subsequent affine mapping, i.e.
\begin{equation}
\label{eq:f}
f:
\begin{cases}
\mathbb{R}^{n_0}&\to \mathbb{R}\\
x&\mapsto W_{L+1}g^{(L)}\circ\dots\circ g^{(1)}(x)+b_{L+1}
\end{cases},
\end{equation}
where $g^{(l)}:\mathbb{R}^{n_{l-1}}\to\mathbb{R}^{n_l}, x\mapsto \textnormal{ReLU}.(W_lx+b_l)$ with the activation function $\textnormal{ReLU}:\mathbb{R}\to\mathbb{R},  x\mapsto \max(x,0)$, matrices $W_l\in \mathbb{R}^{n_{l}\times n_{l-1}}$ and bias vectors $b_l\in\mathbb{R}^{n_l}$, $l\in\left\{ 1,\ldots,L \right\}$. The dot ``$.$'' denotes element-wise application of the $\textnormal{ReLU}$ function and the restriction $n_{L+1}=1$ reflects the fact that we consider real-valued neural networks. 
We denote the class of all such functions by 
\begin{equation}
	\label{eq:F}
	\mathcal{F}:=\left\{ f:\mathbb{R}^{n_0}\to\mathbb{R}\middle\vert\; f\textnormal{ is of the form \eqref{eq:f}} \right\},
\end{equation}where the number of layers $L$ and their widths $n_0,\ldots,n_L$ are implicit. 
The functions $f\in \mathcal{F}$ are piece-wise affine and continuous. They can be represented as a sum 
\begin{equation}
\label{eq:partition}
f(x)=\sum_{i=1}^{M}\mathds{1}_{A_i}(x)\langle x,c_i\rangle +d_i
\end{equation}
with vectors $c_1,\dots,c_{M+1}\in\mathbb{R}^{n_0}$ and real numbers $d_1,\dots,d_M\in\mathbb{R}$ and a disjoint partition of the input space $\mathbb{R}^{n_0}=\bigcup_{i=1}^MA_i$ into convex sets $A_i$, $i\in\{1,\ldots,M\}$ each being the solution of linear inequalities. The works ~\cite{Montufar:2014:NLR:2969033.2969153}, \cite{DBLP:BoundingCounting} and \cite{hinz2018framework} derive upper bounds on $M\in\mathbb{N}$ such that this representation is still possible for all $f\in \mathcal{F}$, for example for $M\le\prod_{i=1}^{L}\sum_{j=0}^{\min\left( n_0,\dots,n_{i-1} \right)}{n_i\choose j}$. However, in this work we rely on the direct canonical representation~\eqref{eq:f} specified by the weight and bias parameters.
\subsection{Deep ReLU programming}
\label{sec:drlpproblem}
Linear programming plays an important role in the field of mathematical optimization and is used in operations research as a main tool to solve many large-scale real-world problems. A linear programming problem is an optimization problem restricted by linear conditions. In its canonical form, for $n\in\mathbb{N}$, $c\in \mathbb{R}^{n_0}$, a matrix $A\in\mathbb{R}^{n\times n_0}$ and a vector $b\in\mathbb{R}^{n}$ the goal is to
\begin{equation}
\label{eq:linearProgramming}
\textnormal{minimize }\langle x,c\rangle\textnormal{ such that } x\in\mathbb{R}_{\ge0}^{n_0}\textnormal{ and } (Ax)_j\le b_j\textnormal{ for }j\in\left\{ 1,\ldots,n \right\}.
\end{equation}
This means that linear programming aims to minimize an affine map over a convex set of feasible arguments that is determined by a number of linear inequalities. 

Deep ReLU programming problems are minimization problems of functions $f\in \mathcal{F}$. Some of the functions $f\in\mathcal{F}$ are not bounded from below, i.e. attain arbitrarily small values and in general multiple local minima may exist. Hence, in contrast to linear programming problems which are convex optimization, deep ReLU programming problems are non-convex in general. Deep ReLU programming can be seen as a generalization of linear programming in two ways:

\begin{itemize}
	\item 
In a linear program we can restrict to vertices as possible candidates for the minimization. Intuitively these are the intersection points, as depicted in Figure~\ref{fig:linearProgram}.
\begin{figure}[htpb]
\centering
\includegraphics[width=0.5\textwidth]{./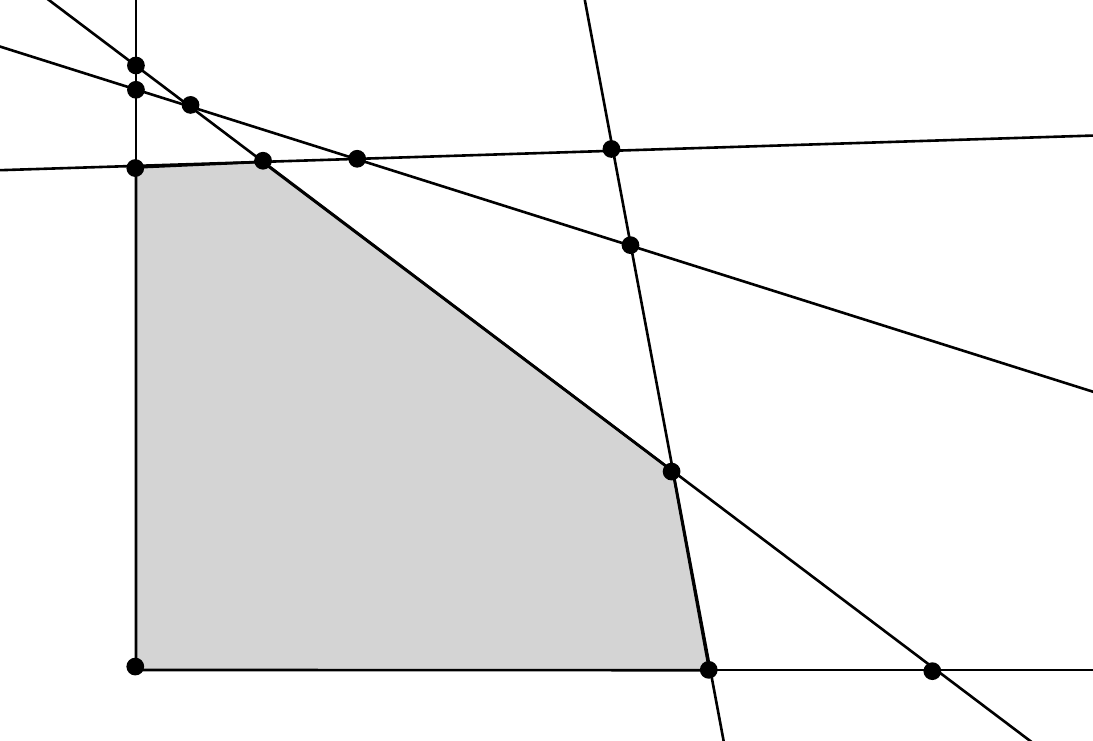}
\caption{Qualitative illustration of the argument space of a linear program with $n_0=2$. The vertices are depicted as dots and the feasible region  is represented by the gray polygon.}
\label{fig:linearProgram}
\end{figure}
If it exists, the global minimum of the objective function $x\mapsto \langle x,c\rangle$, $c\in\mathbb{R}$ will be attained in one of the vertices, more precisely in one of those vertices that are contained in the feasible region. For a given linear program as in equation~\eqref{eq:linearProgramming} and a parameter $\alpha>0$ the neural network $f_{\alpha}:\mathbb{R}^{n_0}\to\mathbb{R}$,
\begin{equation*}
	f_{\alpha}(x)=
		  \textnormal{ReLU}(\langle c,x\rangle)-\textnormal{ReLU}(\langle -c,x\rangle)+\alpha\sum_{i=1}^n\textnormal{ReLU}(\langle A_{i\cdot},x\rangle-b_i)+\alpha\sum_{i=1}^{n_0}\textnormal{ReLU}(-x_i)
\end{equation*}
is of the form~\eqref{eq:f} for $L=1$ and appropriate matrices and bias vectors. Furthermore $f_\alpha(x)=\langle c,x\rangle$ whenever $x$ is in the feasible region, i.e. for $x\ge 0$ and $Ax\le b$ element-wise. For any of the finitely many vertices and for any vertex $x^*\in\mathbb{R}^{n_0}$ that is not contained in the feasible region, the function value of $f_\alpha(x^*)$ can be made arbitrarily large by increasing $\alpha$. Hence for $\alpha$ large enough the original linear program and the problem of minimizing $f_\alpha$ have the same solutions. This shows that for every linear program, there exists a deep ReLU program that has the same vertices and global minimum.
\item Moreover, by equation \eqref{eq:partition} minimizing the restriction $f|_{A_i}$ of $f$ onto the domain $A_i$ for some $i\in\left\{ 1,\ldots,M \right\}$ yields a linear programming problem since the restriction is affine, i.e. $f|_{A_i}(x)=\langle x, c_i\rangle +d_i$ for some $c_i\in\mathbb{R}^{n_0}$ and $d_i\in\mathbb{R}$. In this setting $A_i$, which is given by linear inequalities as noted above, takes the role of the feasible region over which the function $x\mapsto \langle x,c_i\rangle=f|_{A_i}(x)-d_i$ shall be minimized. In this sense linear programming can be seen as a minimization problem of a ReLU feed-forward neural net for inputs restricted to one of its affine regions, and by dropping this condition and allowing to consider other affine regions of the neural net in the minimization problem we arrive at deep ReLU programming.
\end{itemize}
The loss functions of several machine learning methods can be written in the form of a feed-forward ReLU neural network such that parameter training of such methods has the form of deep ReLU programming. For example Linear Absolute Deviation (LAD) regression and Censored Linear Absolute Deviation (CLAD) regression require the optimization of a one and two-hidden-layer neural network respectively. More generally, in neural network training, the L1 training loss for fixed training data also has this form when considered as a function of the weight and bias parameters of a specific layer. This relevance in practical applications gives rise to a deeper analysis of the structure and solvers for deep ReLU programming problems. 

\subsection{Deep ReLU Simplex Algorithm}
A natural idea for an iterative solver for deep ReLU programming problems is the iteration on adjacent vertices of the piece-wise affine objective function. Similarly to the simplex algorithm in linear programming, the iteration should continue until we arrive at a local minimum. A modification of the simplex algorithm to convex separable piece-wise affine functions was presented 1985 in \cite{Fourer1985}. However, the functions $\mathcal{F}$ we are considering~\eqref{eq:F} are not necessarily convex functions such that our approach is different.

As explained in the previous section, in contrast to linear programming we do not have a specifically specified feasible region. Instead multiple regions of different affine behaviour are adjacent to every vertex such that one might propose an algorithm that iterates like the simplex algorithm in linear programming with such an adjacent region selected as the feasible region. Furthermore we need to change the selected feasible region to a different adjacent region of the current vertex position if necessary. In this context, the following questions arise:
\begin{itemize}
	\item How can vertices of the neural network $f$ be defined and characterized?
	\item How can the selected feasible region be specified? 
	\item How can it be changed to an adjacent affine region of $f$? Can we reuse previously computed quantities during this process?
	\item What is an optimality criterion characterizing a local minimum of $f$?
	\item What is the computational complexity of this procedure per step?
\end{itemize}

We will introduce precise mathematical definitions and develop a theory to describe the structure of ReLU neural networks and answer these questions. This allows us to formalize the above described iterative optimization process in our \emph{Deep ReLU Simplex} (DRLSimplex) algorithm, an iterative solver applicable to functions $f\in\mathcal{F}$ that are firstly regular and secondly have vertices. We will define these concepts in Section~\ref{sec:analysis}. For such functions $f$, our algorithm finds a local minimum or detects that the function $f$ is not bounded from below after a finite number of steps. 

Seen as a variant of gradient descent, our method automatically selects the step size maximally such that the affine behaviour of the target function does not change. In particular by design, the value of objective function $f$ is guaranteed to decrease in every step up to numerical uncertainty. For a given position and direction, the computation of this maximal step size of constant affine behaviour can be realized by first solving a linear equation for each neuron to find the critical step size at which its activation changes, and then taking a simple minimum over these numbers. We exploit this fact and the structure of the objective function to construct a number of algorithmic building blocks that are then composed in our DRLSimplex algorithm. Similarly to the simplex algorithm in linear programming, it traverses vertices and keeps track of a local basis corresponding to the axis directions but in contrast to the standard simplex algorithm, these axes do not correspond to edges of the feasible region. Instead they are edges separating regions of different affine behaviour of the objective function $f$. 

The essential extension of our DRLSimplex over the standard simplex algorithm is firstly the ability to change the considered affine region, thus allowing it to be used for deep ReLU programming, and secondly to efficiently update the set of axes during this process. A careful analysis shows that only one axis and the new gradient needs to be updated during this process of changing the region such that the overall runtime for each iteration of our presented algorithm is only $\mathcal{O}(\sum_{i=1}^Ln_in_{i-1}+n_0^2)$. Note that the number of weight and bias parameters of the considered neural network is $\sum_{i=1}^{L+1} n_in_{i-1}$ and $\sum_{i=1}^{L+1}n_i$ respectively with output dimension $n_{L+1}=1$ which is the reason for the order $\mathcal{O}(\sum_{i=1}^Ln_in_{i-1})$ in the runtime per step. But also the complexity term $\mathcal{O}(n_0^2)$ is unavoidable for a simplex-like algorithm because a simplex tableau has to be considered. In this sense, the complexity per step is optimal and the extension of being able to change the feasible region in our algorithm compared to simplex iterations in a fixed feasible region of the objective function comes at zero cost concerning the computational complexity per step.

Our DRLSimplex algorithm allows to strictly decrease the objective function value in every iteration. Paired with the insight that several machine learning optimization problems can be written as deep ReLU programming problems, this provides a new interesting training algorithm with a reasonable computational complexity per iteration. The iteration on vertices guarantees that exact positions of local minima are found on convergence. Furthermore, our algorithm could be used to perform novel empirical studies on the structure of piece-wise affine loss functions such as the vertex density analysis around local minima.
\subsection{Notation and structure}
We consider a vector $a\in\mathbb{R}^{n}$, $n\in\mathbb{N}$ also as a matrix with $n$ rows and one column where appropriate. Similarly the transpose $a^\intercal$ is treated as a matrix with one row and $n$ columns. Where appropriate, we define matrices block-wise based on smaller matrices and zero-filled matrices such as $
C=\tiny
\begin{pmatrix}
A& 0\\
0& B
\end{pmatrix}$ for $A\in\mathbb{R}^{a\times b}$, $B\in\mathbb{R}^{c\times d}$, $a,b,c,d\in\mathbb{N}_{>0}$. In this case, $C$ would have $a+c$ rows and $b+d$ columns. For $n\in\mathbb{N}$, the $n\times n$ identity matrix is denoted by $\textnormal{Id}_n$ and for $a\in\mathbb{R}^n$ the diagonal matrix with values $a$ on its diagonal is denoted by $\textnormal{diag}(a)$.
The dimension of a vector space $V$ and the linear span of vectors $v_1,\ldots,v_k\in V$ are denoted by $\textnormal{dim}(V)$ and $\textnormal{span}(v_1,\ldots,v_k)$ respectively.

When composing functions $\varphi^{(l)}$, $l\ge1$ we use the convention that $\varphi^{(\ell)}\circ\cdots\circ \varphi^{(1)}(x) $ means $\varphi^{(1)}(x)$ for $\ell=1$ and $x$ for $\ell=0$. 
Furthermore for a univariate function  $\varphi:\mathbb{R}\to\mathbb{R}$ the dot ``.'' denotes element-wise application. For example for $x\in\mathbb{R}^{p_1}\times\cdots\times\mathbb{R}^{p_K}$, $K,p_1,\ldots,p_K\in\mathbb{N}$, write $\varphi.(x)$ for $((\varphi(x_{1,1}),\ldots,\varphi(x_{1,p_1})),\ldots,(\varphi(x_{K,1}),\ldots,\varphi(x_{K,p_K})))$.

This paper is structured as follows: In Section~\ref{sec:analysis} we analyze mathematical properties of ReLU feed-forward neural networks and derive results needed in the subsequent sections. We then focus on algorithmic building blocks specifically designed to exploit the layer-wise structure of such networks in Section~\ref{sec:algPrimitives}. These are then combined in Section~\ref{sec:drls} with our mathematical results to construct our DRLSimplex algorithm, an adaptation of the simplex algorithm to deep ReLU programming. Here, we first, we give a description and pseudo-code in Section~\ref{sec:description}, provide a simple implementation in the Julia programming language in Section~\ref{sec:juliaImplementation} and present examples of deep ReLU programming problems that can be minimized with our algorithm in Section~\ref{sec:examples} such as L1 training of a neural network's first layer parameters.  Then we discuss our DRLSimplex algorithm in the context of linear programming and gradient descent-like procedures in Section~\ref{sec:discussion} and highlight its benefits. Finally, we summarize our findings in Section~\ref{sec:summary}. The proofs are deferred to the Appendix~\ref{app:proofs}.

\section{Mathematical Analysis}
\label{sec:analysis}
In this section we introduce the theoretical foundations that are needed in the subsequent sections. Recall that $f$ is a feed-forward neural network with ReLU activations with $L\in\mathbb{N}_+$ hidden layers and in particular a piece-wise affine function. We index the neurons by tuples $(l,j)$ where the $l\in\left\{ 1,\ldots L\right\}$ specifies the layer number and $j\in \left\{ 1,\ldots,n_l \right\}$ the neuron index within the $l$-th layer. The index set of all neurons is denoted by 
\begin{equation}
\label{eq:I}
\mathcal{I}=\left\{ (l,j)\in\mathbb{N}^2\middle\vert l\in\left\{ 1,\ldots,L \right\}, j\in\left\{ 1,\ldots,n_l \right\} \right\}.
\end{equation} 
For our analysis we need to consider the activation of the neurons of the neural network $f$ at a given input $x\in\mathbb{R}^{n_0}$. To this end we define a function that computes the arguments of the ReLU activation functions of the individual neurons. More precisely we define the \emph{ReLU arguments} by
\begin{equation}
\label{eq:args}
A:
\begin{cases}
\mathbb{R}^{n_0}&\to\mathbb{R}^{n_1}\times\cdots\times\mathbb{R}^{n_L}\\
x&\mapsto (W_1x+b_1,W_2g^{(1)}(x)+b_2,\ldots,W_Lg^{(L-1)}\circ\cdots\circ g^{(1)}(x)+b_L)
\end{cases}
\end{equation}using the notation from equation~\eqref{eq:f}.
This is a vector of vectors such that for every $(l,j)\in \mathcal{I}$ $A(x)_{lj}$ is the input of the activation function of the $j$-th neuron in the $l$-th layer.
\subsection{Hyperplane and Activation Patterns}
For every $(l,j)\in\mathcal{I}$ and $x\in\mathbb{R}^{n_{l-1}}$, the value $(W_lx+b_l)_j$ can be positive, zero or negative. If the $j$-th row of $W_l$ is non-zero, $\left\{ x\in\mathbb{R}^{n_{l-1}}\mid (W_lx+b_l)_j=0 \right\}$ defines a hyperplane in $\mathbb{R}^{n_{l-1}}$ and the three aforementioned possibilities correspond to $x$ lying on the positive half-space of this hyperplane, on the hyperplane itself or on its negative half-space. In order to describe the hyperplane relation of the ReLU arguments from equation~\eqref{eq:args} we define the \emph{hyperplane pattern} by
\begin{equation}
	\label{eq:hyperplanePattern}
H:
\begin{cases}
\mathbb{R}^{n_0}&\to\mathcal{H}\\
x&\mapsto \textnormal{sign.}(A(x))
\end{cases}
\end{equation}
with $\mathcal{H}=\left\{ -1,0,1 \right\}^{n_1}\times\cdots\times\left\{ -1,0,1 \right\}^{n_L}$ and element-wise application of the signum function defined by $\textnormal{sign}(t)=t/|t|$ for $t\neq 0$ and \textnormal{sign}(0)=0. The $\textnormal{ReLU}$-function is the identity for arguments greater that $0$, else it maps to $0$. This is why we call a neuron \emph{active} if its argument is positive and otherwise \emph{inactive}. In particular, for $(l,j)\in\mathcal{I}$ the $j$-th neuron in the $l$-th layer is active at input $x\in\mathbb{R}^{n_0}$ if $H(x)_{lj}=1$ and inactive if $H(x)_{lj}\in\left\{ -1,0 \right\}$. We encode this information in the \emph{activation pattern} defined by
\begin{equation}
\label{eq:activationPattern}
S:
\begin{cases}
\mathbb{R}^{n_0}&\to\mathcal{S}\\
x&\mapsto \textnormal{ReLU}.(H(x))
\end{cases}
\end{equation}
with $\mathcal{S}=\left\{ 0,1 \right\}^{n_1}\times\cdots\times\left\{ 0,1 \right\}^{n_L}$ and again element-wise application of the ReLU activation function.  Furthermore we define the \emph{attained activation patterns} by $\mathcal{S}_f=\big\{ S(x)\mid x\in\mathbb{R}^{n_0} \big\}$. Note that for $(l,j)\in\mathcal{I}$, the $j$-th neuron in the $l$-th layer is active at input $x$ if and only if $S(x)_{lj}=1$.
\begin{example}
\label{ex:activationPattern}
Let $L=2$, $(n_0,n_1,n_2,n_3)=(2,2,1,1)$, with weight matrices
\begin{equation*}
W_1=
\begin{pmatrix}
1&-1\\
1&1
\end{pmatrix},
W_2=
\begin{pmatrix}
1&1
\end{pmatrix},
W_3=(1),
b_1=\binom{0}{0},
b_2=-1,
b_3=0.
\end{equation*}
The neurons are indexed by $\mathcal{I}=\left\{ (1,1),(1,2),(2,2) \right\}$, the input space $\mathbb{R}^{n_0}$ is partitioned into $|\mathcal{S}_f|=7$ regions and the attained activation patterns are $\mathcal{S}_f=\left\{ 0,1 \right\}^2\times\left\{ 0,1 \right\}\setminus\big\{(\binom{0}{0},1)\big\}$ as depicted in Figure~\ref{fig:exampleSignature}. The ReLU arguments at $x=(x_1,x_2)\in\mathbb{R}^{2}$ are
\begin{equation*}
A(x)=\left( \binom{x_1-x_2}{x_2+x_2},\textnormal{ReLU}(x_1-x_2)+\textnormal{ReLU}(x_1+x_2)-1  \right)\in\mathbb{R}^2\times\mathbb{R}.
\end{equation*}
It follows that $H(x)_{1,1}=\textnormal{sign}(x_1-x_2)$ such that $\big\{x\in\mathbb{R}^2\mid H(x)_{1,1}=0\big\}=\big\{ x\in\mathbb{R}^2\mid x_1=x_2\big\}=\mathbb{R}\binom{1}{1}$ is a line or more generally, a hyperplane in $\mathbb{R}^2$. Similarly, $\big\{ x\in\mathbb{R}^2\mid H(x)_{1,2}=0 \big\}=\mathbb{R}\binom{1}{-1}$. However $H(x)_{2,1}=0$ if and only if $\textnormal{ReLU}(x_1-x_2)+\textnormal{ReLU}(x_1+x_2)=1$ such that $\big\{x\in\mathbb{R}^2\mid H(x)_{2,1}=0  \big\}$ is not a hyperplane due to the nonlinearity induced by the $\textnormal{ReLU}$-function. Instead it is is a line with two kinks, see Figure~\ref{fig:exampleSignature}.
\begin{figure}[htpb]
\centering
\includegraphics[width=0.5\textwidth]{./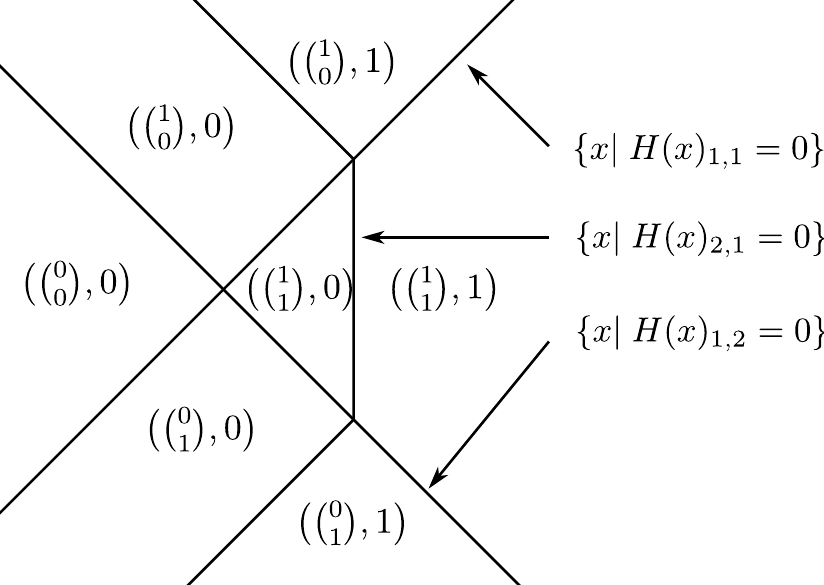}
\caption{Qualitative illustration of the partition of the input space  $\mathbb{R}^{2}$ into $7$ regions with different activation patterns as explained in Example~\ref{ex:activationPattern}.}
\label{fig:exampleSignature}
\end{figure}
\end{example}

\subsection{Objective and subjective quantities}
For the mathematical analysis of theoretical algorithmic foundations for ReLU feed-forward neural networks $f$ it is important to elaborate relations between quantities of the true function $f$ and its induced affine mappings that arise when all neuron activations are frozen, i.e. fixed to specific values. We call quantities \emph{objective} if they are based on the network $f$ and \emph{subjective} if they are based on a fixed-activation counterpart. Subjective quantities are specific to an activation pattern $s\in\mathcal{S}$ that is imposed on the network's neurons and will be consistently denoted with a tilde ``\~{}''. For example for $s\in\mathcal{S}$ we define the \emph{subjective network} $\tilde f_s$ by
\begin{equation}
	\label{eq:tildef}
\tilde{f}_{s}:
\begin{cases}
\mathbb{R}^{n_0}&\to\mathbb{R}\\
x&\mapsto 
W_{L+1}\tilde{g}^{(L)}_{s_L}\circ\cdots\circ \tilde{g}^{(1)}_{s_1}(x)+b_{L+1}
\end{cases}
\end{equation}
with 
\begin{equation}
\tilde{g}^{(l)}_{s_l}:
\begin{cases}
\mathbb{R}^{n_{l-1}}&\to\mathbb{R}^{n_{l}}\\
y&\mapsto\textnormal{diag}(s_l)\left( W_ly+b_l \right)
\end{cases}\quad \textnormal{ for }l\in\left\{ 1,\ldots,L \right\}.
\end{equation}
Note that for all inputs $x\in\mathbb{R}^{n_0}$ it holds that $\tilde f_{S(x)}(x)=f(x)$ since each neuron's $\textnormal{ReLU}$-mapping can be represented by a multiplication of $1$ if the neuron is active or by a multiplication of $0$ if the neuron is inactive, which is precisely encoded in $S(x)$. 

Similar to equation~\eqref{eq:args} we can define the \emph{subjective ReLU arguments} by
\begin{equation}
\label{eq:subjectiveArgs}
\tilde A_s:
\begin{cases}
\mathbb{R}^{n_0}&\to \mathbb{R}^{n_1}\times\cdots\times \mathbb{R}^{n_L}\\
x&\mapsto (W_1x+b_1,W_2\tilde g_{s_1}^{(1)}(x)+b_2,\ldots,W_L \tilde g_{s_{L-1}}^{(L-1)}\circ\cdots\circ \tilde g_{s_1}^{(1)}(x)+b_L)
\end{cases}
\end{equation} for an activation pattern $s\in \mathcal{S}$. The corresponding \emph{subjective hyperplane pattern} is given by
\begin{equation}
\label{eq:subjectiveHPattern}
\tilde H_s:
\begin{cases}
\mathbb{R}^{n_0}&\to\mathcal{H}\\
x&\mapsto \textnormal{sign.}(\tilde A_s(x))
\end{cases}.
\end{equation}
\subsection{Compatibility and Stability}
In equation~\eqref{eq:activationPattern} we arbitrarily defined a neuron to be inactive if its input is exactly $0$. This is a borderline case and it could just as well be defined active for this input. To reflect this fact we introduce the concept of compatibility. We say that a hyperplane pattern $h\in\mathcal{H}$ and an activation pattern $s\in\mathcal{S}$ are \emph{compatible} if 
\begin{equation}
\label{eq:compatible}
\forall (l,j)\in\mathcal{I}\quad h_{lj}(s_{lj}-0.5)\ge 0.
\end{equation}
In other words $s$ and $h$ are compatible if every neuron that is considered active has a non-negative input and every neuron that is considered inactive has a non-positive input. This seemingly simple concept will play an important role in our theory. For convenience we define
\begin{align}
\label{eq:compatibleSignatures}
\mathcal{S}^C(x)&= \big\{ s\in\mathcal{S}\mid\;s\textnormal{ is compatible with } H(x) \big\},\\
\label{def:representingSignaturesFixed}
\tilde{\mathcal{S}}^{C}(x)&=  \big\{ s\in\mathcal{S}\mid\; s\textnormal{ is compatible with }\tilde H_s(x) \big\}
\end{align}
and say that $s\in \mathcal{S}$ is compatible with $x$ if it is compatible with $H(x)$, i.e. if $s\in\mathcal{S}^C(x)$. The first quantity $\mathcal{S}^C(x)$ above is simply the set of all activation patterns that are compatible with the hyperplane pattern $H(x)$ at the input $x\in\mathbb{R}^{n_0}$. The second quantity $\tilde{\mathcal{S}}^C(x)$ is a bit more involved because $s$ is appearing twice in the set condition. It is the set of all activation patterns that are compatible with the their induced subjective hyperplane pattern at $x$. Luckily, both quantities are always equal as the following theorem states.
\begin{theorem}
\label{thm:argumentRepresentation}
For $x\in\mathbb{R}^{n_0}$ the following are true:
\begin{enumerate}
\item $\mathcal{S}^C(x)=\tilde{\mathcal{S}}^C(x)$
\item For $s\in\mathcal{S}^C(x)$ or $s\in\tilde{\mathcal{S}}^C(x)$ it holds that $A(x)=\tilde{A}_s(x)$.
\end{enumerate}
\end{theorem}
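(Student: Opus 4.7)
The plan is to prove both parts simultaneously by an induction on layers, powered by an elementary scalar equivalence. For $t \in \mathbb{R}$ and $\sigma \in \{0,1\}$, the condition $\textnormal{sign}(t)(\sigma - \tfrac12) \ge 0$ is equivalent to $\textnormal{ReLU}(t) = \sigma\, t$. Applied componentwise this says that compatibility of $s_l$ with the hyperplane pattern of a vector $v \in \mathbb{R}^{n_l}$ is equivalent to $\textnormal{ReLU}.(v) = \textnormal{diag}(s_l)\, v$, i.e.\ it converts the sign-based compatibility condition into the algebraic identity making $g^{(l)}$ and $\tilde g^{(l)}_{s_l}$ agree on their input.

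Starting from $s \in \mathcal{S}^C(x)$, I would prove by induction on $l \in \{0,1,\ldots,L\}$ that $g^{(l)} \circ \cdots \circ g^{(1)}(x) = \tilde g^{(l)}_{s_l} \circ \cdots \circ \tilde g^{(1)}_{s_1}(x)$. The base case $l=0$ is the convention that both sides equal $x$. For the step, the inductive hypothesis gives a common value $v$ to which both $g^{(l)}$ and $\tilde g^{(l)}_{s_l}$ are applied, so it suffices to check that ReLU and multiplication by $\textnormal{diag}(s_l)$ agree on $W_l v + b_l$; but by the inductive hypothesis this vector is exactly $A(x)_l$, so compatibility of $s_l$ with $H(x)_l$ is precisely the hypothesis required by the elementary equivalence above. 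This induction directly yields $A(x) = \tilde A_s(x)$, hence $H(x) = \tilde H_s(x)$, which simultaneously establishes item 2 for $s \in \mathcal{S}^C(x)$ and gives the inclusion $\mathcal{S}^C(x) \subseteq \tilde{\mathcal{S}}^C(x)$.

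For the reverse inclusion I would run the same induction, but now checking compatibility against $\tilde H_s(x)$ rather than $H(x)$. The main point requiring care is that $\tilde H_s(x)$ depends on $s$ itself, so a priori the compatibility condition looks self-referential. The resolution is that this dependence is acyclic in the layer index: $\tilde A_s(x)_1 = W_1 x + b_1 = A(x)_1$ does not involve $s$ at all, so compatibility of $s_1$ against $\tilde H_s$ and against $H$ coincide on layer $1$; once compatibility has been transferred through layer $l-1$, the induction yields $\tilde A_s(x)_l = A(x)_l$, after which the two compatibility conditions on layer $l$ also coincide. Propagating through layers $1,\ldots,L$ delivers $A(x) = \tilde A_s(x)$ and $s \in \mathcal{S}^C(x)$, closing both the set equality in item 1 and item 2 in full generality.
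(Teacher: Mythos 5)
Your proof is correct and follows essentially the same route as the paper's: both directions rest on the scalar observation that compatibility of $s_{lj}$ with the sign of the ReLU argument is equivalent to $\textnormal{ReLU}(t)=s_{lj}\,t$, which is then propagated through the layers by induction to give $A(x)=\tilde A_s(x)$, with the set equality $\mathcal{S}^C(x)=\tilde{\mathcal{S}}^C(x)$ falling out of $H(x)=\tilde H_s(x)$. Your explicit remark that the apparent self-reference in the definition of $\tilde{\mathcal{S}}^C(x)$ is harmless because the dependence on $s$ is acyclic in the layer index is a nice clarification, but it does not change the substance of the argument.
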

The above theorem is an important tool for the subsequent sections. Below we present two immediate consequences.
\begin{corollary}
\label{cor:representation}
For all $x\in\mathbb{R}^{n_0}$ and compatible activation patterns $s\in\mathcal{S}^C(x)$, it holds that $\tilde{f}_s(x)=f(x)$.
\end{corollary}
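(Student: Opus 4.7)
The plan is to invoke Theorem~\ref{thm:argumentRepresentation} to identify the objective and subjective ReLU arguments, and then verify that the only remaining discrepancy between $f(x)$ and $\tilde f_s(x)$---namely, the outermost activation being $\mathrm{ReLU}.$ versus $\mathrm{diag}(s_L)$---vanishes on this common argument vector thanks to compatibility. The proof therefore reduces to a coordinate-wise check in the $L$-th layer, with no induction needed.

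Concretely, I would first unfold the definitions so that
\begin{equation*}
f(x) = W_{L+1}\,\mathrm{ReLU}.(A(x)_L) + b_{L+1},\qquad \tilde f_s(x) = W_{L+1}\,\mathrm{diag}(s_L)\,\tilde A_s(x)_L + b_{L+1},
\end{equation*}
using that $g^{(L)}\circ\cdots\circ g^{(1)}(x)=\mathrm{ReLU}.(A(x)_L)$ by definition of $A$, and analogously $\tilde g^{(L)}_{s_L}\circ\cdots\circ\tilde g^{(1)}_{s_1}(x)=\mathrm{diag}(s_L)\,\tilde A_s(x)_L$ by the definitions of $\tilde g$ and $\tilde A_s$. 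Part~(2) of Theorem~\ref{thm:argumentRepresentation}, applicable because $s\in\mathcal{S}^C(x)$, then gives $A(x)=\tilde A_s(x)$, so in particular $A(x)_L=\tilde A_s(x)_L$. Thus it suffices to show
\begin{equation*}
\mathrm{ReLU}.(A(x)_L)=\mathrm{diag}(s_L)\,A(x)_L.
\end{equation*}

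For this I would check each coordinate $j\in\{1,\ldots,n_L\}$ separately. The compatibility condition~\eqref{eq:compatible} at $(L,j)$ reads $H(x)_{Lj}(s_{Lj}-0.5)\ge 0$. If $s_{Lj}=1$, this forces $H(x)_{Lj}\in\{0,1\}$, i.e.\ $A(x)_{Lj}\ge 0$, and hence $\mathrm{ReLU}(A(x)_{Lj})=A(x)_{Lj}=s_{Lj}\,A(x)_{Lj}$. If $s_{Lj}=0$, it forces $H(x)_{Lj}\in\{-1,0\}$, i.e.\ $A(x)_{Lj}\le 0$, and hence $\mathrm{ReLU}(A(x)_{Lj})=0=s_{Lj}\,A(x)_{Lj}$. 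In both cases the coordinates match, so the vector identity above holds and we conclude $f(x)=\tilde f_s(x)$.

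The substantive work in this corollary is entirely packaged inside Theorem~\ref{thm:argumentRepresentation}; once the two argument tuples are known to coincide, the remaining step is just a sign case analysis matching compatibility against the piecewise behaviour of $\mathrm{ReLU}$. If anything were to be an obstacle, it would be purely notational---making sure the final-layer expressions are written as functions of $A(x)_L$ and $\tilde A_s(x)_L$ in a form where the theorem applies directly, rather than being tempted to re-prove an inductive identity through all $L$ layers.
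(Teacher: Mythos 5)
Your proof is correct and follows essentially the same route as the paper: both reduce the statement to Theorem~\ref{thm:argumentRepresentation} and then observe that the final-layer discrepancy between $\mathrm{ReLU}.$ and $\mathrm{diag}(s_L)$ disappears under compatibility. Your version is in fact slightly more careful than the paper's one-line proof, which writes $\tilde f_s(x)=W_{L+1}\mathrm{ReLU}.(\tilde A_s(x)_L)+b_{L+1}$ and leaves the coordinate-wise sign check implicit.
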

\begin{corollary}
\label{cor:sameHyperplanePatternSet}
For $x\in\mathbb{R}^{n_0}$ and a compatible activation pattern $s\in\mathcal{S}^{C}(x)$, it holds that
\begin{equation*}
\big\{ y\in\mathbb{R}^{n_0}\mid H(y)=H(x) \big\}=\big\{ y\in\mathbb{R}^{n_0}\mid \tilde{H}_s(y)=\tilde{H}_s(x) \big\}.
\end{equation*}
\end{corollary}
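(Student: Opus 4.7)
The plan is to derive the corollary directly from Theorem~\ref{thm:argumentRepresentation}. The engine of the proof is part (2) of that theorem: as soon as we can certify that $s$ is compatible with a given point (either via $H$ or via $\tilde{H}_s$), we obtain $A=\tilde A_s$ at that point, and consequently $H=\tilde{H}_s$ there. The corollary is then a double inclusion that amounts to transferring compatibility of $s$ from $x$ to $y$ in two different ways depending on which direction we are proving.

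First I would note the base identity at $x$. Since $s\in\mathcal{S}^C(x)$, Theorem~\ref{thm:argumentRepresentation}(2) gives $A(x)=\tilde A_s(x)$, hence $H(x)=\textnormal{sign.}(A(x))=\textnormal{sign.}(\tilde A_s(x))=\tilde{H}_s(x)$. This single observation will be combined with the analogous identity at $y$ to conclude.

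For the inclusion ``$\subseteq$'', I would take $y$ with $H(y)=H(x)$. Compatibility is a pointwise sign condition depending only on the hyperplane pattern, so $s$ is compatible with $H(y)$, that is $s\in\mathcal{S}^C(y)$. Applying Theorem~\ref{thm:argumentRepresentation}(2) at $y$ yields $A(y)=\tilde A_s(y)$, hence $\tilde{H}_s(y)=H(y)=H(x)=\tilde{H}_s(x)$. For the inclusion ``$\supseteq$'', I would take $y$ with $\tilde{H}_s(y)=\tilde{H}_s(x)$. Here one cannot appeal to compatibility with $H(y)$ directly because $H(y)$ is what we want to determine. Instead I would use part (1) of Theorem~\ref{thm:argumentRepresentation}: from $s\in\mathcal{S}^C(x)=\tilde{\mathcal{S}}^C(x)$, $s$ is compatible with $\tilde{H}_s(x)$, and since $\tilde{H}_s(y)=\tilde{H}_s(x)$, $s$ is also compatible with $\tilde{H}_s(y)$. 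Thus $s\in\tilde{\mathcal{S}}^C(y)=\mathcal{S}^C(y)$, so Theorem~\ref{thm:argumentRepresentation}(2) applied at $y$ gives $A(y)=\tilde A_s(y)$, whence $H(y)=\tilde{H}_s(y)=\tilde{H}_s(x)=H(x)$.

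There is essentially no hard step; the proof is a bookkeeping exercise around Theorem~\ref{thm:argumentRepresentation}. The only point worth stating clearly is that the two inclusions must transfer compatibility in opposite directions: in the forward inclusion compatibility transfers via the objective pattern $H$, while in the reverse inclusion the subjective pattern $\tilde{H}_s$ is the only pattern at $y$ initially under our control, so we must route through part (1) of Theorem~\ref{thm:argumentRepresentation} to gain access to compatibility of $s$ with $H(y)$ and finish with part (2).
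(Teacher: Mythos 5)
Your proof is correct and follows essentially the same route as the paper's: the forward inclusion transfers compatibility of $s$ to $y$ via the objective pattern $H$ and then applies Theorem~\ref{thm:argumentRepresentation}, while the reverse inclusion routes through $\tilde{\mathcal{S}}^C(y)=\mathcal{S}^C(y)$ exactly as the paper does. No gaps.
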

For our theory and our DRLP-algorithm it is essential to understand the relation of objective quantities specific to the true network $f$ and their subjective counterparts specific to a fixed activation. For this purpose we call a quantity specific to some input $x\in\mathbb{R}^{n_0}$ \emph{stable} if its objective and subjective version with compatible activation $s\in\mathcal{S}^C(x)$ are equal. For example the function value of the network at $x\in\mathbb{R}^{n_0}$ is stable by Corollary~\ref{cor:representation} because $f(x)=\tilde f_s(x)$ for compatible $s\in\mathcal{S}^C(x)$. Similarly, Corollary~\ref{cor:sameHyperplanePatternSet} shows that at $x\in\mathbb{R}^{n_0}$ the set of other input values with identical hyperplane pattern is stable.

In our DRLSimplex algorithm we will iterate on vertices of a ReLU feed-forward network which we will define later. These are the intersection points of several affine regions of the network and in particular the activation pattern corresponding to a vertex is not canonically given. Instead we will keep track of the activation pattern in a dedicated state parameter. As long as this activation pattern $s\in\mathcal{S}$ is compatible with the current vertex position, for stable quantities we automatically obtain their objective version by computing the subjective version corresponding to $s$. To exploit this fact it is essential to prove stability of all quantities used in our algorithm. Simply put, our algorithm sees the neural network with imposed compatible activation $s$ at input $x$ and to be sure that derived quantities are not depending on the choice of $s$ we need the stability property of all such relevant quantities.

\subsection{Local behaviour}
We now want to focus on the local behaviour of feed-forward neural networks with ReLU activation functions. For $x\in\mathbb{R}^{n_0}$ and $\varepsilon>0$ we denote the $\varepsilon$-\emph{ball} around $x$ by $B_\varepsilon(x):=\left\{ y\in\mathbb{R}^{n_0}\middle\vert\;\|x-y\|_2<\varepsilon \right\}$. 
\subsubsection{Critical Indices}
\label{sec:criticalIndices}
At every input $x\in\mathbb{R}^{n_0}$ the hyperplane $H(y)$ pattern might or might not be constant for $y$ in an infinitely small neighbourhood around $x$. If it is not constant, there are one or more indices $(l,j)\in\mathcal{I}$ describing the positions of corresponding neurons whose activation is not constant. We call these indices the \emph{critical indices} and denote them by
\begin{equation}
\label{eq:C}
C(x)= \left\{ (l,j)\in \mathcal{I} 
\middle\vert\;\forall \varepsilon>0\;|
\left\{ H(y)_{l,j}\middle\vert\; y\in B_\varepsilon(x) \right\}
|>1
\right\}\subset \mathcal{I},
\end{equation}
where $\mathcal{P}$ denotes the power set. Similarly we define the \emph{subjective critical indices} of $x\in\mathbb{R}^{n_0}$ by
\begin{equation}
\label{eq:subjectiveC}
\tilde{C}_s(x)=
\{(l,j)\in\mathcal{I}\vert \;\forall \varepsilon>0\;\vert\{ \tilde{H}_s(y)_{l,j}\vert\; y\in B_\varepsilon(x) \}\vert>1  \}\quad\textnormal{ for }s\in\mathcal{S}.
\end{equation}
\begin{lemma}
\label{lem:zero}
For $x\in\mathbb{R}^{n_0}$ and all $(l,j)\in C(x)$, it holds that $H(x)_{l,j}=0$ and for all $s\in\mathcal{S}$ and $(l,j)\in \tilde C_s(x)$ it holds that $\tilde H_s(x)_{l,j}=0$.
\begin{proof}
If  $H(x)_{l,j}=c\in\left\{ -1,1 \right\}$ for some $x\in\mathbb{R}^{n_0}$ and $(l,j)\in \mathcal{I}$, then by continuity of $y\mapsto A(y)_{l,j}$ for $\varepsilon$ small enough $|\left\{ H(y)_{l,j}\middle\vert\; y\in B_{\varepsilon} \right\}|=|\left\{ c \right\}|=1\not > 1$ such that $(l,j)\not \in C(x)$. The case for $\tilde H_s$ is along similar lines.
\end{proof}
\end{lemma}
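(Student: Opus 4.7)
The plan is to prove the contrapositive for both statements in parallel: if a hyperplane pattern entry is nonzero at $x$, then the corresponding neuron index cannot be critical. The unifying tool is continuity of the ReLU argument maps $A$ and $\tilde A_s$.

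First I would observe that $A:\mathbb{R}^{n_0}\to\mathbb{R}^{n_1}\times\cdots\times\mathbb{R}^{n_L}$ defined in equation~\eqref{eq:args} is a continuous function, since it is built from compositions and sums of affine maps and the continuous $\textnormal{ReLU}$ activation. In particular, each coordinate map $y\mapsto A(y)_{l,j}$ is continuous. Fix $x\in\mathbb{R}^{n_0}$ and $(l,j)\in\mathcal{I}$, and suppose for contradiction that $H(x)_{l,j}=c$ with $c\in\{-1,+1\}$. By definition of the hyperplane pattern this means $c\cdot A(x)_{l,j}>0$, so continuity gives some $\varepsilon>0$ with $c\cdot A(y)_{l,j}>0$ for all $y\in B_\varepsilon(x)$. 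Consequently $H(y)_{l,j}=c$ constantly on $B_\varepsilon(x)$, so $|\{H(y)_{l,j}\mid y\in B_\varepsilon(x)\}|=1$, which contradicts $(l,j)\in C(x)$. Hence $H(x)_{l,j}=0$ for every critical index, establishing the first claim.

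For the second claim I would repeat the argument verbatim with $\tilde A_s$ and $\tilde H_s$ in place of $A$ and $H$. The only thing to check is that $\tilde A_s$ is continuous for every fixed $s\in\mathcal{S}$, but this is immediate from its definition in equation~\eqref{eq:subjectiveArgs}: each $\tilde g^{(l)}_{s_l}(y)=\textnormal{diag}(s_l)(W_l y+b_l)$ is affine, and $\tilde A_s$ is a tuple of affine compositions of these maps, hence continuous. The same contradiction then rules out $\tilde H_s(x)_{l,j}\in\{-1,+1\}$ whenever $(l,j)\in\tilde C_s(x)$.

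There is no real obstacle here; the statement is essentially a continuity remark and the only thing worth being careful about is writing down the continuity of $A$ (since it involves the composition $g^{(L-1)}\circ\cdots\circ g^{(1)}$ and one might worry about layer depth, but continuity is preserved under composition, so this is automatic).
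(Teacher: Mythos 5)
Your argument is exactly the paper's proof: the contrapositive via continuity of $y\mapsto A(y)_{l,j}$ (resp. $y\mapsto \tilde A_s(y)_{l,j}$), showing the sign is locally constant so the index cannot be critical. The only addition is your explicit verification that $A$ and $\tilde A_s$ are continuous, which the paper takes for granted; the proof is correct.
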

Unfortunately, the critical indices are not stable, i.e. for the subjective and objective versions equality $C(x)=\tilde C_s(x)$ does not hold for all $x\in\mathbb{R}^{n_0}$ and compatible $s\in\mathcal{S}^C(x)$ as the following two examples demonstrate.
\begin{example}
\label{ex:CSub}
Let $L=2$, $(n_0,n_1,n_2,n_3)=(1,2,1,1)$, $b_1=(0,0)^\intercal$, $W_1=(1,1)^\intercal$, $W_2=(1,-1)$, $b_2=0$, $W_3=(1)$, $b_3=0$.
It holds that
\begin{align*}
C(0)&= ((1,1), (1,2))\subsetneq \mathcal{I}=\left\{ (1,1),(1,2),(2,1) \right\},\\
\mathcal{S}^C(0)&= \left\{ s\in\left\{ 0,1 \right\}^2\times \left\{ 0,1 \right\}\mid\; s\textnormal{ is compatible with }H(0)=
\left( 
\tbinom{0}{0},0 \right)
\right\}\\
&= \left\{ 0,1 \right\}^2\times \left\{ 0,1 \right\},\\
\tilde A_{\left( \binom{0}{1},1 \right)}(x)&= \left( \tbinom{x}{x},-x \right),\\
\tilde C_{\left( \binom{0}{1},1 \right)}(0)&= \left\{ (1,1),(1,2)),(2,1) \right\}=\mathcal{I}.
\end{align*}
In particular $C(0)\subsetneq \tilde C_{s}(0)$ for $s=\big( \binom{0}{1},1 \big)\in\mathcal{S}^{c}(0)$
\end{example}
\begin{example}
\label{ex:CSup}
Let $L=2$, $(n_0,n_1,n_2,n_3)=(1,2,1,1)$, $b_1=(0,0)^\intercal$, $W_1=(1,-1)^\intercal$, $W_2=(1,-1)$, $b_2=0$, $W_3=(1)$, $b_3=0$. It holds that
\begin{align*}
C(0)&= ((1,1), (1,2), (2,2))= \mathcal{I}\\
\mathcal{S}^c(0)&= \left\{ s\in\left\{ 0,1 \right\}^2\times \left\{ 0,1 \right\}\mid\; s\textnormal{ is compatible with }H(0)=
\left( 
\tbinom{0}{0},0 \right)
\right\}\\
&= \left\{ 0,1 \right\}^{2}\times \left\{ 0,1 \right\},\\
\tilde A_{\left( \binom{0}{0},1 \right)}(x)&= \left( \tbinom{x}{x},0 \right),\\
\tilde C_{\left( \binom{0}{0},1 \right)}(0)&= \left\{ (1,1),(1,2))\right\}\subsetneq\mathcal{I}.
\end{align*}
In particular $C(0)\supsetneq \tilde C_s(0)$ for $s=\big( \tbinom{0}{0},1 \big)\in\mathcal{S}^{c}(0)$.
\end{example}
Later in Section~\ref{sec:regularity} we will introduce a \emph{regularity} condition for which Theorem~\ref{thm:regularity} proves stability of the critical indices.
\subsubsection{Critical Kernel}
We now want to focus on how the hyperplane pattern changes locally. To describe the directions in which it does not change, we define the \emph{critical kernel} at $x\in\mathbb{R}^{n_0}$ by
\begin{equation}
\label{eq:hyperKernel}
\textnormal{Ker}^C(x)=\left\{ v\in\mathbb{R}^{n_0}\mid \exists \varepsilon^*>0\;\forall t\in(-\varepsilon^*,\varepsilon^*)\;H(x+tv)=H(x) \right\}.
\end{equation}
Similarly, for a specific activation pattern $s\in\mathcal{S}$ we define the \emph{subjective critical kernel} by
\begin{equation}
\tilde{\textnormal{Ker}}_s^C(x)=\left\{ v\in\mathbb{R}^{n_0}\mid \exists \varepsilon^*>0\;\forall t\in(-\varepsilon^*,\varepsilon^*)\;\tilde H_s(x+t v)=\tilde H_s(x) \right\}.
\end{equation}
The critical kernel at some input $x\in\mathbb{R}^{n_{0}}$ is stable as the following lemma shows.
\begin{lemma}
\label{lem:sameKernel}
For all $x\in\mathbb{R}^{n_0}$ and $s\in\mathcal{S}^{C}(x)$ it holds that $\textnormal{Ker}^C(x)=\tilde{\textnormal{Ker}}{}^C_s(x)$.
\begin{proof}
This follows immediately from Corollary~\ref{cor:sameHyperplanePatternSet}.
\end{proof}
\end{lemma}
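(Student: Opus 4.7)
The plan is to unfold both definitions into a statement about set membership along the ray $\{x + tv \mid t \in (-\varepsilon^*, \varepsilon^*)\}$, and then invoke Corollary~\ref{cor:sameHyperplanePatternSet} to identify the two sets. Concretely, the membership $v \in \textnormal{Ker}^C(x)$ is by definition equivalent to the existence of $\varepsilon^* > 0$ such that $x + tv \in E$ for all $t \in (-\varepsilon^*, \varepsilon^*)$, where
\begin{equation*}
E := \{y \in \mathbb{R}^{n_0} \mid H(y) = H(x)\}.
\end{equation*}
The analogous reformulation holds for the subjective version with the set
\begin{equation*}
\tilde{E}_s := \{y \in \mathbb{R}^{n_0} \mid \tilde{H}_s(y) = \tilde{H}_s(x)\}.
\end{equation*}

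Next, I would apply Corollary~\ref{cor:sameHyperplanePatternSet}, which, under the hypothesis $s \in \mathcal{S}^C(x)$, gives precisely $E = \tilde{E}_s$. Substituting this equality into either of the two reformulated membership conditions shows that $v \in \textnormal{Ker}^C(x)$ if and only if $v \in \tilde{\textnormal{Ker}}^C_s(x)$, which yields the claimed equality of the two kernels.

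There is no real obstacle; the entire content of the lemma is a direct rewriting of the kernel definitions in terms of the sets whose equality is provided by the earlier corollary. The one point to be careful about is that the same $\varepsilon^*$ serves both formulations, but this is automatic since the set equality $E = \tilde{E}_s$ is global on $\mathbb{R}^{n_0}$, so the ray condition for any given $\varepsilon^*$ translates between the two versions without modification.
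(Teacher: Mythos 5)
Your proof is correct and is exactly the paper's argument: the paper also deduces the lemma immediately from Corollary~\ref{cor:sameHyperplanePatternSet}, and your write-up simply spells out the unfolding of the two kernel definitions that makes the deduction explicit.
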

The above result states that we can find directions in which the hyperplane pattern of the neural network $f$ does not change locally by only considering the subjective affine functions $\tilde A_s(x)_{lj}$, $(l,j)\in\mathcal{I}$ for some compatible $s\in\mathcal{S}^C(x)$. In other words, for $s\in\mathcal{S}(x)$ the subjective quantity $\tilde{\textnormal{Ker}}{}_s^C(x)$ is a universal property and not specific to $s$. The following reformulation makes this result more useful.
\begin{lemma} For all $x\in\mathbb{R}^{n_0}$ and $s\in\mathcal{S}$ it holds that
\label{lem:tildeKernel}
\begin{equation}
	\label{eq:tildeKernel}
	\tilde{\textnormal{Ker}}{}_s^C(x)=\bigcap_{(l,j)\in\tilde C_s(x)} \left\{ v\in\mathbb{R}^{n_0}\mid \tilde A_s(x+v)_{lj}=0 \right\}.
\end{equation}
\end{lemma}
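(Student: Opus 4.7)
The plan is to reduce the claim to a coordinate-wise analysis that exploits the affineness of $\tilde A_s$. The first observation is that each component of $\tilde A_s$ is an affine function of the input: since the subjective layer maps $\tilde g^{(l)}_{s_l}(y)=\textnormal{diag}(s_l)(W_ly+b_l)$ are themselves affine, their composition carries no ReLU nonlinearity and the coordinate $\tilde A_s(\cdot)_{lj}$ can be written as $\tilde A_s(y)_{lj}=\alpha_{lj}^\intercal y+\beta_{lj}$ for some $\alpha_{lj}\in\mathbb R^{n_0}$ and $\beta_{lj}\in\mathbb R$. In particular, for any $v\in\mathbb R^{n_0}$ and $t\in\mathbb R$,
\begin{equation*}
\tilde A_s(x+tv)_{lj}=\tilde A_s(x)_{lj}+t\,\alpha_{lj}^\intercal v,
\end{equation*}
so the sign function along the line through $x$ with direction $v$ can be analysed explicitly.

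Next I would fix $v\in\mathbb R^{n_0}$ and examine the requirement $\tilde H_s(x+tv)_{lj}=\tilde H_s(x)_{lj}$ for $|t|$ sufficiently small, index by index. For $(l,j)\notin\tilde C_s(x)$, negating the defining condition in~\eqref{eq:subjectiveC} yields some $\varepsilon>0$ on which $y\mapsto\tilde H_s(y)_{lj}$ takes only one value on $B_\varepsilon(x)$, which must be $\tilde H_s(x)_{lj}$. Thus (treating $v=0$ trivially) for all $|t|<\varepsilon/\|v\|_2$ the constraint holds automatically, so non-critical indices impose no restriction on $v$. For $(l,j)\in\tilde C_s(x)$, the subjective form of Lemma~\ref{lem:zero} forces $\tilde A_s(x)_{lj}=0$, so the constraint $\tilde H_s(x+tv)_{lj}=0$ for all small $t$ reduces to $\textnormal{sign}(t\,\alpha_{lj}^\intercal v)=0$ on some open interval around $0$, equivalently $\alpha_{lj}^\intercal v=0$. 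Using once more $\tilde A_s(x)_{lj}=0$, this is the same as $\tilde A_s(x+v)_{lj}=\tilde A_s(x)_{lj}+\alpha_{lj}^\intercal v=0$.

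Combining the two cases, $v\in\tilde{\textnormal{Ker}}{}^C_s(x)$ is equivalent to $\tilde A_s(x+v)_{lj}=0$ for every $(l,j)\in\tilde C_s(x)$, which is the desired identity in~\eqref{eq:tildeKernel}. The finite cardinality of $\mathcal I$ allows one to take a single uniform $\varepsilon^*>0$ that works for all indices simultaneously. The only real delicacy is the two-case split, in particular the invocation of the subjective version of Lemma~\ref{lem:zero} to turn the hyperplane-pattern condition at $x$ into the algebraic identity $\tilde A_s(x)_{lj}=0$; once that translation is made the remaining step is just the elementary affine geometry of crossing a hyperplane.
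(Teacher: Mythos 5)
Your proof is correct and follows essentially the same route as the paper's: both arguments split the constraint $\tilde H_s(x+tv)=\tilde H_s(x)$ into non-critical indices (which impose no condition, by negating the definition in~\eqref{eq:subjectiveC} and using finiteness of $\mathcal{I}$ to get a uniform $\varepsilon^*$) and critical indices (where Lemma~\ref{lem:zero} gives $\tilde A_s(x)_{lj}=0$ and the affinity of $\tilde A_s(\cdot)_{lj}$ turns the local sign condition into the linear equation $\tilde A_s(x+v)_{lj}=0$). Your phrasing as a chain of equivalences rather than two separate inclusions is a cosmetic difference only.
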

This lemma shows that for $s\in\mathcal{S}$ the subjective critical kernel is a linear subspace of $\mathbb{R}^{n_0}$. In connection with Lemma~\ref{lem:sameKernel} it states that for every $x\in\mathbb{R}^{n_0}$ the critical kernel $\textnormal{Ker}^C(x)$ is the solution set of all $v\in \mathbb{R}^{n_0}$ satisfying the linear equation system $\tilde A_s(x+v)_{lj}=0$ for all $(l,j)\in\tilde C_s(x)$. It is remarkable that this holds irrespective of the choice of $s\in\mathcal{S}^C(x)$. Despite the fact that firstly for different $s,s'\in\mathcal{S}^{C}(x)$ the subjective critical indices might not have the same cardinality $|\tilde C_s(x)|\neq|\tilde C_{s'}(x)|$ and secondly, the affine functions $\tilde A_s$ and $\tilde A_{s'}$ can differ, the subjective critical kernels are the same, i.e. $\tilde{\textnormal{Ker}}{}_s^C(x)=\tilde{\textnormal{Ker}}{}_{s'}^C(x)$.
\begin{lemma}
\label{lem:hyperKernelEquality}
For $x\in\mathbb{R}^{n_0}$ there exists an $\varepsilon^*>0$ such that
\begin{equation}
B_{\varepsilon^*}(0)\cap \left\{ v\in\mathbb{R}^{n_0}\middle\vert H(x+v)=H(x) \right\}=B_{\varepsilon^*}(0) \cap \textnormal{Ker}^C(x)
\end{equation}
\end{lemma}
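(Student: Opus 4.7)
The plan is to reduce the objective statement about $H$ to one about a subjective hyperplane pattern $\tilde H_s$ for a fixed compatible activation, and then exploit the linear-algebraic structure of the latter. Concretely, I would choose any $s\in\mathcal{S}^C(x)$ (for instance $s=S(x)$). By Corollary~\ref{cor:sameHyperplanePatternSet}, the set $\{v\in\mathbb{R}^{n_0}\mid H(x+v)=H(x)\}$ coincides with $\{v\in\mathbb{R}^{n_0}\mid \tilde H_s(x+v)=\tilde H_s(x)\}$, and by Lemma~\ref{lem:sameKernel}, $\textnormal{Ker}^C(x)=\tilde{\textnormal{Ker}}{}_s^C(x)$. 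Hence it suffices to prove the desired local equality with $\tilde H_s$ replacing $H$ and $\tilde{\textnormal{Ker}}{}_s^C(x)$ replacing $\textnormal{Ker}^C(x)$.

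Next, I would split the index set $\mathcal{I}$ into the subjective critical indices $\tilde C_s(x)$ and their complement. For each $(l,j)\notin \tilde C_s(x)$, the component $v\mapsto \tilde H_s(x+v)_{lj}$ is by definition locally constant at $v=0$; since $\tilde A_s(\cdot)_{lj}$ is an affine function, this either means $\tilde A_s(x)_{lj}\neq 0$ (so its sign persists on some neighbourhood by continuity) or $\tilde A_s(\cdot)_{lj}$ vanishes on an open set and therefore identically. In both sub-cases one produces a radius $\varepsilon_{lj}>0$ with $\tilde H_s(x+v)_{lj}=\tilde H_s(x)_{lj}$ for all $v\in B_{\varepsilon_{lj}}(0)$. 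Because $\mathcal{I}$ is finite, setting $\varepsilon^*=\min_{(l,j)\notin \tilde C_s(x)}\varepsilon_{lj}>0$ gives a uniform radius on which the non-critical coordinates of $\tilde H_s$ are already frozen.

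On this ball, the condition $\tilde H_s(x+v)=\tilde H_s(x)$ therefore reduces to $\tilde H_s(x+v)_{lj}=0$ for every $(l,j)\in\tilde C_s(x)$, where I used Lemma~\ref{lem:zero} to get $\tilde H_s(x)_{lj}=0$ for such indices. As $\tilde A_s(\cdot)_{lj}$ is affine, this is equivalent to the system of linear equations $\tilde A_s(x+v)_{lj}=0$, $(l,j)\in\tilde C_s(x)$, whose solution set is precisely $\tilde{\textnormal{Ker}}{}_s^C(x)$ by Lemma~\ref{lem:tildeKernel}. This yields both inclusions on $B_{\varepsilon^*}(0)$ and finishes the proof.

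The step that I expect to require the most care is the uniform choice of $\varepsilon^*$: the definition of $\textnormal{Ker}^C(x)$ only delivers a direction-dependent radius $\varepsilon^*_v$, so one must argue through the subjective picture, where the finite index set $\mathcal{I}$ together with the affine nature of each $\tilde A_s(\cdot)_{lj}$ allows a single uniform radius. The mildly subtle sub-case is when $\tilde A_s(\cdot)_{lj}$ is identically zero for some non-critical index, which is handled by the remark that an affine function vanishing on a neighbourhood vanishes everywhere.
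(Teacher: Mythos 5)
Your proof is correct and follows essentially the same route as the paper's: reduce to the subjective pattern via Corollary~\ref{cor:sameHyperplanePatternSet}, freeze the non-critical coordinates on a uniform ball (the paper obtains this directly from the negation of the definition in equation~\eqref{eq:subjectiveC} plus finiteness of $\mathcal{I}$, where you give a slightly more elaborate argument via affinity), reduce the critical coordinates to the linear system $\tilde A_s(x+v)_{lj}=0$ using Lemma~\ref{lem:zero}, and conclude with Lemmas~\ref{lem:tildeKernel} and \ref{lem:sameKernel}.
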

The above lemma states that locally around $x\in\mathbb{R}^{n_0}$ the solutions $x'\in\mathbb{R}^{n_0}$ to $H(x')=H(x)$ are precisely the affine subspace $x+\textnormal{Ker}^C(x)$. We call its dimension the \emph{critical degrees of freedom}
\begin{equation}
\label{eq:df}
\textnormal{df}^C(x)=\textnormal{dim}\left( \textnormal{Ker}^C(x) \right).
\end{equation}
Informally, this is the dimension of the vector space of directions which do not change the sign of any neuron output before the ReLU activation function is applied. We have shown that we can compute this dimension by picking an arbitrary $s\in\mathcal{S}^C(x)$ and considering the kernel of a linear equation system~\eqref{eq:tildeKernel}.  
Note that by Lemma~\ref{lem:tildeKernel} we have the following lower bound:
\begin{lemma}
\label{lem:dfge}
For $x\in\mathbb{R}^{n_0}$ and $s\in\mathcal{S}^C(x)$ it holds that $\textnormal{df}^C(x)\ge n_0-|\tilde C_s(x)|$.
\end{lemma}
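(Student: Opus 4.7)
The plan is to combine the two preceding lemmas and then reduce to a dimension count for a homogeneous linear system. First I would invoke Lemma~\ref{lem:sameKernel} to replace the objective critical kernel by its subjective counterpart, giving $\textnormal{df}^C(x)=\dim\bigl(\tilde{\textnormal{Ker}}{}_s^C(x)\bigr)$ for the chosen compatible $s\in\mathcal{S}^C(x)$. Then, by Lemma~\ref{lem:tildeKernel}, this subjective critical kernel equals
\begin{equation*}
\bigcap_{(l,j)\in\tilde C_s(x)} \bigl\{ v\in\mathbb{R}^{n_0}\mid \tilde A_s(x+v)_{lj}=0 \bigr\},
\end{equation*}
so the task reduces to bounding the dimension of this intersection from below.

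Next I would observe that for each fixed $s\in\mathcal{S}$, the subjective ReLU arguments $\tilde A_s$ are affine functions of their input, since by construction (see equation~\eqref{eq:subjectiveArgs}) each subjective layer map $\tilde g^{(l)}_{s_l}$ is affine, and compositions of affine maps are affine. Hence, for every neuron $(l,j)\in\mathcal{I}$, the map $v\mapsto \tilde A_s(x+v)_{lj}$ is affine in $v$. Crucially, for $(l,j)\in\tilde C_s(x)$, Lemma~\ref{lem:zero} gives $\tilde H_s(x)_{lj}=0$, i.e.\ $\tilde A_s(x)_{lj}=0$, so the equation $\tilde A_s(x+v)_{lj}=0$ becomes a \emph{homogeneous} linear equation in $v$, defining a linear subspace of $\mathbb{R}^{n_0}$ of codimension at most $1$.

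Finally, the subjective critical kernel is the intersection of $|\tilde C_s(x)|$ such linear subspaces, each of codimension at most $1$. By the standard dimension bound for intersections of linear subspaces (or equivalently, the rank--nullity theorem applied to the linear system whose rows are the gradients of the affine maps above), this intersection has dimension at least $n_0-|\tilde C_s(x)|$. Chaining the inequalities yields
\begin{equation*}
\textnormal{df}^C(x)=\dim\bigl(\tilde{\textnormal{Ker}}{}_s^C(x)\bigr)\ge n_0-|\tilde C_s(x)|,
\end{equation*}
which is the claim.

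The argument is essentially bookkeeping; the only nontrivial point is ensuring that each equation is genuinely linear (not merely affine), which is where Lemma~\ref{lem:zero} is needed — without it one would only get an affine intersection that might be empty, and no dimension bound would follow. Once that is in hand, the estimate is immediate from linear algebra.
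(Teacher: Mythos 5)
Your proof is correct and follows essentially the same route the paper intends: the paper derives this lemma directly from Lemma~\ref{lem:tildeKernel} (together with Lemma~\ref{lem:sameKernel}), viewing the critical kernel as the solution set of $|\tilde C_s(x)|$ homogeneous linear equations and counting dimensions. Your additional remark that Lemma~\ref{lem:zero} is what makes each equation homogeneous rather than merely affine is exactly the point that makes the dimension bound go through.
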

\subsection{Normal vectors}
For $s\in\mathcal{S}$ the subjective neural network $\tilde f_s$ is an affine function. The same holds for the subjective ReLU arguments $x\mapsto \tilde A_s(x)_{lj}$ for every neuron position $(l,j)\in \mathcal{I}$. By equation~\eqref{eq:subjectiveArgs} it follows that 
\begin{equation}
\label{eq:explFormulation}
\tilde A_s(x)_{lj}=\langle x, \tilde v_{s,l,j}\rangle+\tilde w_{s,l,j}
\end{equation}with 
$\tilde w_{s,l,j}=b_{lj} + \left( \sum_{\ell=1}^{l-1} W_l\textnormal{diag}(s_{l-1})\cdots W_{\ell+1}\textnormal{diag}(s_\ell)b_\ell \right)_j $ and
\begin{equation}
	\label{eq:v}
\tilde v_{s,l,j}=\left( W_l\textnormal{diag}(s_{l-1})W_{l-1}\cdots \textnormal{diag}(s_1)W_1 \right)_{j\cdot}
\end{equation}
for $s\in\mathcal{S}$ and $(l,j)\in\mathcal{I}$. We call $(\tilde v_{s,l,j})_{(l,j)\in\mathcal{I}}$, the \emph{normal vectors} for $s\in\mathcal{S}^C(x)$. Figure~\ref{fig:normalVector} illustrates this concept. For $(l,j)\in\mathcal{I}$, the set $\big\{ x\in\mathbb{R}^2\mid \tilde A_s(x)_{lj}=0 \big\}$ is a $1$-dimensional hyperplane if and only if the normal vector $\tilde v_{s,l,j}$ is non-zero. In contrast, the objective version without a fixed activation $\big\{ x\in\mathbb{R}^2\mid A(x)_{lj}=0 \big\}$ is in general not a hyperplane as explained previously in Example~\ref{ex:activationPattern}.
\begin{figure}[htpb]
\centering
\includegraphics[width=0.6\linewidth]{./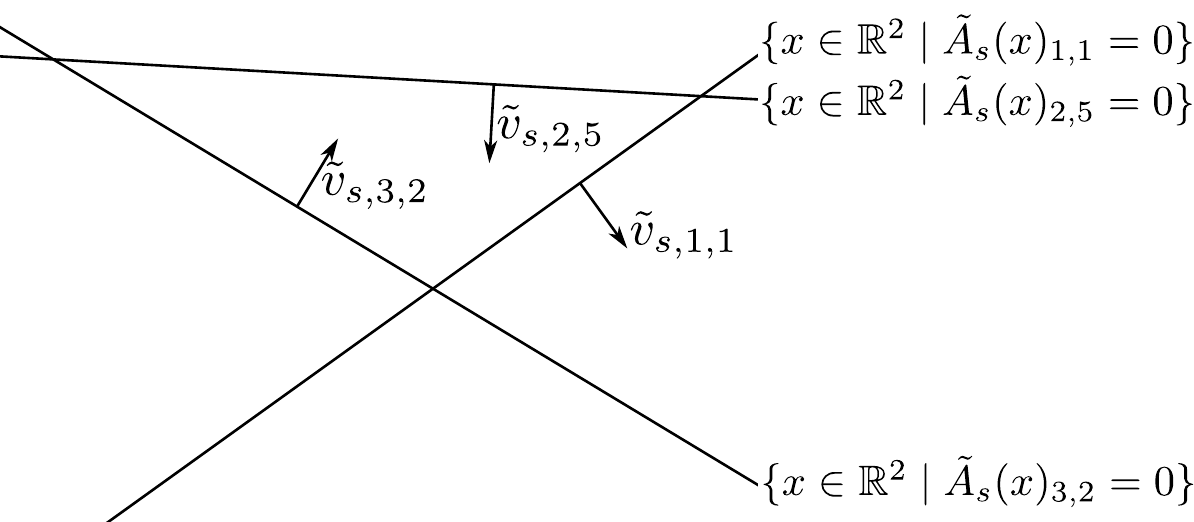}
\caption{Generic qualitative illustration of normal vectors for $n_0=2$. The non-zero normal vectors $\tilde v_{s,l,j}$ are precisely the normal vectors of the hyperplanes $\{x\in\mathbb{R}^2|\; \tilde A_{s}(x)_{l,j}=0\}$ for $(l,j)\in\mathcal{I}$.}
\label{fig:normalVector}
\end{figure}
Below we present two results about these normal vectors.
\begin{lemma}
\label{lem:tildeV}
For $x\in\mathbb{R}^{n_0}$, $s\in\mathcal{S}^C(x)$ and $(l,j)\in \tilde C_s(x)$ and $v\in\mathbb{R}^{n_0}$ it holds that
$\tilde A_s(x+v)_{lj}=\langle v,\tilde v_{s,l,j}\rangle.$
\begin{proof}
Equation~\eqref{eq:explFormulation} and Lemma~\ref{lem:zero} imply $\tilde A_s(x+v)_{lj}=\tilde A_s(x)_{lj}+\langle v,\tilde v_{s,l,j}\rangle=\langle v,\tilde v_{s,l,j}\rangle$.
\end{proof}
\end{lemma}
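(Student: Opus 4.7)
The plan is to exploit two facts already established in the excerpt: first, that the map $x\mapsto \tilde A_s(x)_{lj}$ is affine with slope equal to the normal vector $\tilde v_{s,l,j}$, and second, that at a subjective critical index the value of $\tilde A_s$ vanishes.

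First I would unfold equation~\eqref{eq:explFormulation}. This equation asserts that $\tilde A_s(y)_{lj}=\langle y,\tilde v_{s,l,j}\rangle+\tilde w_{s,l,j}$ for every $y\in\mathbb{R}^{n_0}$, which is exactly an affine function of $y$ with linear part given by $\tilde v_{s,l,j}$. Applying this with $y=x+v$ and with $y=x$ and subtracting yields immediately
\begin{equation*}
\tilde A_s(x+v)_{lj}=\tilde A_s(x)_{lj}+\langle v,\tilde v_{s,l,j}\rangle.
\end{equation*}
At this point it only remains to justify why the constant contribution $\tilde A_s(x)_{lj}$ is zero.

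Second, I would invoke Lemma~\ref{lem:zero}, applied to the subjective hyperplane pattern. Because $(l,j)\in \tilde C_s(x)$ is a subjective critical index, Lemma~\ref{lem:zero} forces $\tilde H_s(x)_{lj}=0$. By the definition \eqref{eq:subjectiveHPattern} of $\tilde H_s$ as $\textnormal{sign.}(\tilde A_s(\cdot))$, this is exactly the statement $\tilde A_s(x)_{lj}=0$. Substituting into the affine expansion above closes the argument.

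I do not foresee a genuine obstacle here: the lemma is essentially a bookkeeping consequence of the affine representation \eqref{eq:explFormulation} together with Lemma~\ref{lem:zero}. The hypothesis $s\in\mathcal{S}^C(x)$ is not even used in the computation itself; it will matter only downstream, when one wishes to identify $\tilde A_s$ and $A$ via Theorem~\ref{thm:argumentRepresentation}. The only thing one has to be careful about is to read off the correct linear part in \eqref{eq:explFormulation}, namely that the gradient of the affine map $x\mapsto \tilde A_s(x)_{lj}$ is precisely the row $(W_l\textnormal{diag}(s_{l-1})\cdots \textnormal{diag}(s_1)W_1)_{j\cdot}=\tilde v_{s,l,j}$ from \eqref{eq:v}, and not some transposed variant.
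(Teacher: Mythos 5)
Your proposal is correct and matches the paper's own proof exactly: both use the affine representation in equation~\eqref{eq:explFormulation} to write $\tilde A_s(x+v)_{lj}=\tilde A_s(x)_{lj}+\langle v,\tilde v_{s,l,j}\rangle$ and then Lemma~\ref{lem:zero} to kill the term $\tilde A_s(x)_{lj}$. Your added observation that $s\in\mathcal{S}^C(x)$ is not actually needed for the computation is accurate.
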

\begin{lemma}
Let $x\in\mathbb{R}^{n_0}$ and $s\in\mathcal{S}$. For all $(l,j)\in \tilde C_s(x)$ it holds that $\tilde v_{s,l,j}\neq 0$.
\label{lem:nonzeroChanging}
\begin{proof}
By equation~\eqref{eq:subjectiveC}, $x\mapsto \tilde A_{s}(x)_{lj}$ cannot be constant. The statement now follows from equation~\eqref{eq:explFormulation}.
\end{proof}
\end{lemma}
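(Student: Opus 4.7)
The plan is to argue by contradiction via the affine representation of the subjective ReLU argument. First I would fix $(l,j)\in \tilde C_s(x)$ and consider the map $y\mapsto \tilde A_s(y)_{lj}$. By equation~\eqref{eq:explFormulation}, this map is affine with gradient $\tilde v_{s,l,j}$ and constant term $\tilde w_{s,l,j}$, so it is a genuine function of $y$ precisely when $\tilde v_{s,l,j}\neq 0$.

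Next I would suppose for contradiction that $\tilde v_{s,l,j}=0$. Then $\tilde A_s(y)_{lj}=\tilde w_{s,l,j}$ is independent of $y$, so $\tilde H_s(y)_{lj}=\textnormal{sign}(\tilde A_s(y)_{lj})=\textnormal{sign}(\tilde w_{s,l,j})$ is a fixed element of $\{-1,0,1\}$ for every $y\in\mathbb{R}^{n_0}$. In particular for every $\varepsilon>0$ the set $\{\tilde H_s(y)_{lj}\mid y\in B_\varepsilon(x)\}$ is a singleton, so its cardinality is $1$ and not strictly greater than $1$. By definition~\eqref{eq:subjectiveC} this means $(l,j)\notin \tilde C_s(x)$, contradicting the assumption on $(l,j)$.

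There is essentially no obstacle here: once one notices that the subjective ReLU argument is affine in $y$ with normal vector $\tilde v_{s,l,j}$, the only way the sign can fail to be locally constant is that the affine function is non-constant, which forces the normal vector to be nonzero. The whole argument is just a direct appeal to the definition of $\tilde C_s(x)$ combined with the explicit form~\eqref{eq:explFormulation}; no use of compatibility or of the objective network is needed.
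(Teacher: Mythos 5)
Your argument is correct and is essentially the paper's own proof, just unrolled as a contradiction rather than stated in contrapositive form: the paper likewise observes that membership in $\tilde C_s(x)$ (via the definition~\eqref{eq:subjectiveC}) forces $y\mapsto \tilde A_s(y)_{lj}$ to be non-constant, and then concludes $\tilde v_{s,l,j}\neq 0$ from the affine representation~\eqref{eq:explFormulation}. No gap, and no genuinely different route.
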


\begin{figure}[htpb]
\centering
\includegraphics[width=0.4\linewidth]{./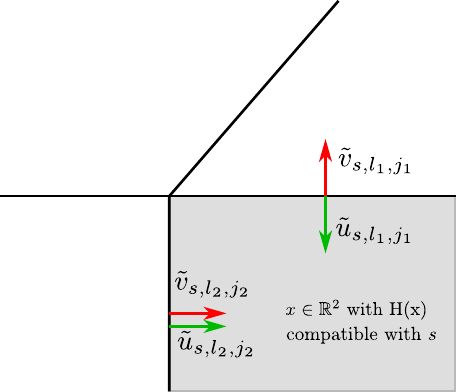}
\caption{Intuitively, for $x\in\mathbb{R}^{n_0}$, $s=S(x)$ and $(l,j)\in\mathcal{I}$, the direction of non-zero oriented normal vectors $\tilde u_{s,l,j}$ (green) specifies on which side of the hyperplane $\{ x'\in\mathbb{R}^{n_0}|\;\tilde A(x')_{lj}=0\}$ the argument $x$ lies. For this purpose, the normal vector $\tilde v_{s,l,j}$ (red) is flipped if necessary.}
	\label{fig:orientedNormal}
\end{figure}
The description of our algorithm furthermore requires a slightly modified version. For $s\in\mathcal{S}$ and $(l,j)\in\mathcal{I}$ we define the \emph{oriented normal vector} $\tilde u_{s,l,j}\in\mathbb{R}^{n_0}$ by
\begin{equation}
	\label{eq:orientedNormal}
	\tilde u_{s,l,j}=
	\begin{cases}
		\tilde v_{s,l,j}&\quad \textnormal{ if }s_{lj}=1\\
		-\tilde v_{s,l,j}&\quad \textnormal{ if }s_{lj}=0
	\end{cases}.
\end{equation}
For each activation pattern $s\in\mathcal{S}$ and neuron position $(l,j)\in\mathcal{I}$, the direction of the oriented normal vector $\tilde v_{s,l,j}$ indicates on which side of the hyperplane $\{ x\in\mathbb{R}^{n_0}| \tilde A_s(x)_{lj}=0 \}$ the points $x\in\mathbb{R}^{n_0}$ with activation pattern $S(x)=s$ are located, given that such points exist, see Figure~\ref{fig:orientedNormal}. In fact, in terms of linear programming, they can be used to define the ``feasible region'' $\{x\in\mathbb{R}^{n_0}|S(x)=s\}$ and closely related, we will base our definition of the feasible directions on the oriented normal vectors, see Section~\ref{sec:feasibleDirections}.
\subsection{Regularity}
\label{sec:regularity}
We call $x\in\mathbb{R}^{n_0}$ a \emph{vertex} if $\textnormal{df}^C(x)=0$. Furthermore, $x\in\mathbb{R}^{n_0}$ is a \emph{regular point} if 
the normal vectors $\left( \tilde v_{s,l,j} \right)_{(l,j)\in \tilde C_s(x)}$ are linearly independent for all compatible $s\in \mathcal{S}^C(x)$. If both conditions hold, $x$ is a \emph{regular vertex}.  Furthermore,  we call the ReLU feed-forward neural network $f$ \emph{regular} if all points $x\in\mathbb{R}^{n_0}$ are regular points. 
In Section~\ref{sec:criticalIndices} we saw that the critical indices are not stable, i.e. that $C(x)=\tilde C_s(x)$ does not hold for all $x\in\mathbb{R}$ and compatible activation patterns $s\in\mathcal{S}^{C}(x)$. However, if we restrict to regular arguments $x$, stability does hold.
\begin{theorem}
	\label{thm:regularity}
	For a regular point $x\in\mathbb{R}^{n_0}$ it holds that $C(x)=\tilde C_s(x)$ for all $s\in \mathcal{S}^C(x)$.
\end{theorem}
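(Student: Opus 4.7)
The plan is to prove the two inclusions $\tilde{C}_s(x)\subseteq C(x)$ and $C(x)\subseteq \tilde{C}_s(x)$ separately for the given regular $x$ and arbitrary $s\in \mathcal{S}^C(x)$.

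For $\tilde{C}_s(x)\subseteq C(x)$, I fix $(l,j)\in \tilde{C}_s(x)$ and construct a nearby point $y$ with $H(y)_{l,j}\neq 0=H(x)_{l,j}$ (the latter equality by Lemma~\ref{lem:zero}). Regularity provides linear independence of $(\tilde{v}_{s,l',j'})_{(l',j')\in \tilde{C}_s(x)}$, so I can choose $v\in\mathbb{R}^{n_0}$ with $\langle v,\tilde{u}_{s,l,j}\rangle=1$ and $\langle v,\tilde{u}_{s,l',j'}\rangle=0$ for the remaining critical indices. For $t>0$ small, $y:=x+tv$ stays compatible with $s$: non-critical coordinates of $\tilde{H}_s(y)$ match $\tilde{H}_s(x)$ by continuity; the other critical coordinates of $\tilde{A}_s(y)$ remain exactly zero and are compatible with any bit; and the sign convention in $\tilde{u}_{s,l,j}$ makes the sign of $\tilde{A}_s(y)_{l,j}$ agree with $s_{l,j}$. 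Theorem~\ref{thm:argumentRepresentation} then yields $A(y)=\tilde{A}_s(y)$, so $A(y)_{l,j}\neq 0$ and $(l,j)\in C(x)$.

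For $C(x)\subseteq \tilde{C}_s(x)$, I fix $(l,j)\in C(x)$. Lemma~\ref{lem:zero} and Theorem~\ref{thm:argumentRepresentation} give $\tilde{A}_s(x)_{l,j}=A(x)_{l,j}=0$, so it suffices to show $\tilde{v}_{s,l,j}\neq 0$; this will force the affine $\tilde{A}_s(\cdot)_{l,j}$ (vanishing at $x$ but non-constant) to change sign in every neighborhood of $x$, giving $(l,j)\in \tilde{C}_s(x)$. As a preliminary step I obtain the nonvanishing for \emph{some} compatible pattern: pick $y_n\to x$ with $A(y_n)_{l,j}\neq 0$, pass to a subsequence where $H(y_n)=h$ is constant, and select $s^*\in \mathcal{S}^C(x)$ compatible with $h$---such $s^*$ exists because $h$ agrees with $H(x)$ at coordinates where $H(x)\neq 0$ by continuity, while $s^*_{l',j'}$ is unconstrained when $H(x)_{l',j'}=0$. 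Then $s^*\in \mathcal{S}^C(y_n)$ for large $n$, so $\tilde{A}_{s^*}(y_n)=A(y_n)$, forcing $\tilde{v}_{s^*,l,j}\neq 0$ and $(l,j)\in \tilde{C}_{s^*}(x)$.

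The main obstacle is to promote $\tilde{v}_{s^*,l,j}\neq 0$ to $\tilde{v}_{s,l,j}\neq 0$ for the originally prescribed $s$, equivalently to show that regularity forces $\tilde{C}_{s'}(x)$ to be independent of the compatible pattern $s'$. The plan exploits two structural facts: first, Lemma~\ref{lem:sameKernel} combined with regularity gives $|\tilde{C}_{s'}(x)|=n_0-\dim\textnormal{Ker}^C(x)$ constant in $s'$; second, the ``big cells'' $\bar{U}_{s'}=\{y:s'\in \mathcal{S}^C(y)\}$ are, under regularity, full-dimensional polyhedral cones based at $x$ whose union covers a neighborhood of $x$, and on each of which $A=\tilde{A}_{s'}$. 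If $\tilde{v}_{s,l,j}=0$ then $A(\cdot)_{l,j}\equiv 0$ throughout $\bar{U}_s$; propagating this vanishing across the $(n_0-1)$-dimensional facets shared with adjacent cells via continuity of $A$, while tracking how $\tilde{v}_{s',l,j}$ transforms under the flip of a single critical bit (only bits at layers strictly below $l$ change $\tilde{v}_{s',l,j}$, since it depends only on $s'_1,\ldots,s'_{l-1}$), should force $A(\cdot)_{l,j}\equiv 0$ on every cell of a neighborhood of $x$, contradicting $\tilde{v}_{s^*,l,j}\neq 0$. Making this cell-to-cell propagation rigorous is the technical heart of the argument.
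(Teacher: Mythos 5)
Your first inclusion $\tilde{C}_s(x)\subseteq C(x)$ is sound: the dual direction $v$ exists by regularity, the compatibility of $x+tv$ with $s$ for small $t>0$ follows from Lemma~\ref{lem:zero}, Lemma~\ref{lem:tildeV} and the orientation convention, and Theorem~\ref{thm:argumentRepresentation} then transfers $\tilde{A}_s(x+tv)_{l,j}\neq 0$ to $A(x+tv)_{l,j}\neq 0$. The preliminary step of the second inclusion (existence of \emph{some} $s^*\in\mathcal{S}^C(x)$ with $(l,j)\in\tilde{C}_{s^*}(x)$) is also fine.

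The gap is exactly where you place it: you never actually prove that $\tilde{C}_{s'}(x)$ is independent of the compatible pattern $s'$, and without that the second inclusion does not close. Your proposed cell-to-cell propagation is only a plan (``should force \dots''), and as stated it is missing the one algebraic fact that makes it work. The paper isolates this fact as Lemma~\ref{lem:collinear}: if $s$ and $s'$ are compatible with $x$ and differ only at the single position $(l^*,j^*)$, and if $\tilde{v}_{s,\tilde{l},\tilde{j}}=0$ while $\tilde{v}_{s',\tilde{l},\tilde{j}}\neq 0$, then $\tilde{v}_{s',\tilde{l},\tilde{j}}$ is a scalar multiple of $\tilde{v}_{s,l^*,j^*}$; this follows from the explicit product formula~\eqref{eq:v}, since the two products differ only in one diagonal entry of $\textnormal{diag}(s_{l^*})$. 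Combined with the observation that $\tilde{A}_{s'}(x)_{\tilde{l},\tilde{j}}=A(x)_{\tilde{l},\tilde{j}}=0$ (Theorem~\ref{thm:argumentRepresentation} and Lemma~\ref{lem:zero}), a change of the critical set under a one-bit flip would put two collinear normal vectors at critical indices of $s'$, contradicting regularity. The paper then handles arbitrary pairs $s,s'\in\mathcal{S}^C(x)$ by a path of one-bit flips inside $\mathcal{S}^C(x)$ (which exists because all flipped bits sit at positions where $H(x)=0$), and finally identifies the common set with $C(x)$. Your facet-propagation picture is morally a geometric restatement of this, but to make it rigorous you would still have to prove the collinearity statement (or an equivalent), control which cells actually share full-dimensional facets near a regular point that need not be a vertex, and rule out the case where the propagated vanishing meets a cell on which $\tilde{v}_{s',l,j}$ is nonzero but $(l,j)$ fails to be critical. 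None of this is carried out, so the proof as written is incomplete at its decisive step.
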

The above result is an important statement required to understand correctness of our proposed DRLSimplex algorithm in Section~\ref{sec:drls} because it implies that for every regular vertex $x\in\mathbb{R}^{n_0}$ the subjective critical indices $\tilde C_s(x)$ are always identical for different compatible activation patterns $s\in\mathcal{S}^C(x)$. Intuitively, for a compatible $s\in\mathcal{S}^C(x)$ if we find the critical indices $\tilde C_s(x)$ i.e. those $n_0$ neuron positions $(l,j)\in\mathcal{I}$ for which the set $\{x'\in\mathbb{R}^{n_0}|\; \tilde A_s(x')_{lj}=0\}$ is a hyperplane containing the regular vertex $x$, we can be sure that for a different compatible activation pattern $s'\in\mathcal{S}^{C}(x)$ exactly the same indices $\tilde C_{s'}(x)=\tilde C_s(x)$ will be critical. In particular in an algorithm, if we update the state parameter representing a compatible activation pattern, another state parameter representing the critical indices will still contain the correct value and is not required to be recomputed or updated to keep the state variables consistent. We will exploit this fact in Section~\ref{sec:updatingFeasibleAxes}.

Below we present a probabilistic result which guarantees that a random neural network $f$ is regular almost surely for specific distributional assumptions on its parameters.
\label{sec:regularVertices}
\begin{theorem}
	\label{thm:regularNetwork}
Assume the $\sum_{i=2}^{L+1}n_ln_{l-1}$ weight parameters of the matrices $W_1,\ldots,W_{L}$ and the $\sum_{i=2}^{L+1}n_i$ bias parameters of the vectors $b_1,\ldots,b_L$ of the neural network $f$ are sampled from a distribution such that conditionally on the weight parameters, the bias parameters are independent with a conditional marginal distribution that assigns probability zero to all finite sets. Then 
\begin{equation*}
	\mathbb{P}\left[ f \textnormal{ is regular}\right]=1,
\end{equation*}i.e. almost surely all arguments $x\in\mathbb{R}^{n_0}$ of $f$ are regular points.
\end{theorem}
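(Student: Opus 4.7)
The plan is to bound the probability of irregularity by a finite union of algebraic bad events indexed by activation patterns $s \in \mathcal{S}$ and neuron subsets $J \subseteq \mathcal{I}$, and then to show each such event has conditional probability zero given the weights, using the continuity assumption on the biases.

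If $f$ is irregular there exist $x \in \mathbb{R}^{n_0}$ and $s \in \mathcal{S}^C(x)$ such that $(\tilde v_{s,l,j})_{(l,j) \in \tilde C_s(x)}$ is linearly dependent; set $J := \tilde C_s(x)$, which is non-empty (the empty family is vacuously independent) and, by Lemma~\ref{lem:zero}, satisfies $\tilde A_s(x)_{lj} = 0$ for all $(l,j) \in J$. Defining
\begin{equation*}
E_{s,J} := \bigl\{\exists x \in \mathbb{R}^{n_0}:\ \tilde A_s(x)_{lj} = 0\ \forall (l,j) \in J\ \text{and}\ (\tilde v_{s,l,j})_{(l,j) \in J}\ \text{is linearly dependent}\bigr\},
\end{equation*}
the irregularity event is contained in $\bigcup_{s \in \mathcal{S}} \bigcup_{\varnothing \neq J \subseteq \mathcal{I}} E_{s,J}$, a finite union, so it suffices to show $\mathbb{P}(E_{s,J}) = 0$ for each fixed pair $(s, J)$.

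I would reformulate $E_{s,J}$ algebraically using~\eqref{eq:explFormulation}. Assemble the normal vectors into a matrix $V_{s,J} \in \mathbb{R}^{|J| \times n_0}$ with rows $\tilde v_{s,l,j}^\intercal$ and set $w_{s,J} \in \mathbb{R}^J$ to have entries $\tilde w_{s,l,j}$. Note that $V_{s,J}$ depends only on the weights $W$. Let $K := \ker V_{s,J}^\intercal$. Linear dependence of the rows is $\dim K \ge 1$, and the existence of $x$ solving $V_{s,J} x = -w_{s,J}$ is, by $\mathrm{Range}(V_{s,J}) = K^\perp$, equivalent to $w_{s,J} \perp K$. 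Hence $E_{s,J} = \{\dim K(W) \ge 1\} \cap \{w_{s,J} \perp K(W)\}$.

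To finish, condition on $W$. On $\{\dim K \ge 1\}$ I would select any non-zero $\alpha \in K$ as a measurable function of $W$ (e.g.\ from the reduced row echelon form of $V_{s,J}^\intercal$), and let $(l^*, j^*) \in J$ be an index with $l^*$ maximal subject to $\alpha_{l^*, j^*} \neq 0$. From the explicit recursive formula for $\tilde w_{s,l,j}$ given before equation~\eqref{eq:v}, the bias $b_{l^*, j^*}$ appears in $\tilde w_{s,l,j}$ only when $l \ge l^*$, and for $l = l^*$ only when $j = j^*$ with coefficient $1$; together with $\alpha_{l,j} = 0$ for all $(l,j) \in J$ with $l > l^*$, this shows that the single consequence $\sum_{(l,j) \in J} \alpha_{lj} \tilde w_{s,l,j} = 0$ of $w_{s,J} \perp K$ has the form $\alpha_{l^*, j^*} b_{l^*, j^*} + (\text{terms not involving } b_{l^*, j^*}) = 0$. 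This pins $b_{l^*, j^*}$ to a single value, which by the assumed continuity of its conditional marginal has conditional probability zero, so $\mathbb{P}(E_{s,J} \mid W) = 0$ almost surely and the tower property together with the union bound complete the argument. The main technical obstacle is the layer-triangular bookkeeping isolating $b_{l^*, j^*}$ as a free variable with non-vanishing coefficient $\alpha_{l^*, j^*}$; this is the only point where the recursive layer structure is used, and it is what dictates the maximal-layer choice of $(l^*, j^*)$.
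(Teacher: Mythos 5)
Your proof is correct, and it reaches the conclusion by a genuinely more direct route than the paper. The paper proves the theorem via Lemma~\ref{lem:randomHyperplanes}, an induction on the number $n$ of hyperplanes $H^{s_i}_{l_i,j_i}$ from equation~\eqref{eq:hyperplanes}: the induction hypothesis is used to guarantee that the sub-family obtained by removing the deepest-layer hyperplane is linearly independent, so that a non-empty common intersection together with linear dependence forces the deepest hyperplane to contain a fixed affine subspace, pinning its bias to a single value. You dispense with the induction entirely by encoding the bad event for a fixed pair $(s,J)$ as $\{\dim\ker V_{s,J}^\intercal\ge 1\}\cap\{w_{s,J}\perp\ker V_{s,J}^\intercal\}$ and extracting a single $W$-measurable kernel vector $\alpha$; the resulting linear relation $\sum_{(l,j)\in J}\alpha_{lj}\tilde w_{s,l,j}=0$ plays the role of the paper's containment condition, and your maximal-layer choice of $(l^*,j^*)$ (which exactly mirrors the paper's choice of $l_k=\max_i l_i$) isolates $b_{l^*,j^*}$ with the non-vanishing coefficient $\alpha_{l^*,j^*}$. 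The underlying probabilistic mechanism is identical — condition on the weights and all other biases, then use that the remaining bias assigns zero mass to singletons — but your one-shot linear-algebra formulation avoids both the induction and the paper's separate treatment of zero normal vectors (your event $E_{s,J}$ is a superset that absorbs that case, and by Lemma~\ref{lem:nonzeroChanging} the critical normal vectors are non-zero anyway). The only points to make explicit in a polished write-up are the measurable selection of $\alpha$ and a tie-breaking rule for $j^*$ when several indices share the maximal layer, both of which you already flag.
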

The condition is satisfied for example if all parameters of the neural network are independent random variables each with a continuous distribution or if the weight matrices are deterministic and the bias parameters are independent random variables with a continuous distribution. In such cases, the above result states that all points $x\in\mathbb{R}^{n_0}$ in the input space are regular points. Our DRLSimplex algorithm we describe in Section~\ref{sec:drls} can only iterate on regular points and the above result demonstrates that this is no severe restriction.
\subsection{Feasible directions}
\label{sec:feasibleDirections}
For each input value $x\in\mathbb{R}^{n_0}$ and compatible $s\in\mathcal{S}^C(x)$ we consider directions $v\in\mathbb{R}$ such that locally $s$ is still compatible for arguments in direction $v$ from $x$ i.e. such that for some $\varepsilon>0$, $s\in\mathcal{S}^C(x+\varepsilon v)$. We call the class of such directions \emph{feasible directions}, formally defined by
\begin{equation}
\tilde{D}_s(x):=\big\{ v\in\mathbb{R}^{n_0}\mid\;\exists\varepsilon>0\quad s\in\mathcal{S}^C(x+\varepsilon v) \big\}.
\end{equation}
The following lemma provides a different formulation in terms of the subjective ReLU arguments $\tilde A_s$.
\begin{lemma}
\label{lem:feasibleFormulation}
For $x\in\mathbb{R}^{n_0}$ and $s\in\mathcal{S}^C(x)$ it holds that 
\begin{equation}
\label{eq:feasibleAlternativeDef}
\tilde{D}_s(x)=\big\{ v\in\mathbb{R}^{n_0}\mid\; \forall (l,j)\in \tilde C_s(x)\quad \tilde A_s(x+v)_{lj}(s_{lj}-0.5)\ge 0\big\}.
\end{equation}
\end{lemma}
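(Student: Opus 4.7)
The plan is to chain two reformulations: first, pass from the defining condition $s\in\mathcal{S}^C(x+\varepsilon v)$ to a pointwise inequality on the affine map $\tilde A_s$ via Theorem~\ref{thm:argumentRepresentation}; second, split the resulting conditions according to whether the index $(l,j)$ lies in $\tilde C_s(x)$ or not, and show that the non-critical ones are automatic for small $\varepsilon$ while the critical ones are scale-invariant.

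Concretely, for any $y\in\mathbb{R}^{n_0}$, Theorem~\ref{thm:argumentRepresentation} gives $\mathcal{S}^C(y)=\tilde{\mathcal{S}}^C(y)$, so $s\in\mathcal{S}^C(y)$ is equivalent to $\tilde H_s(y)_{lj}(s_{lj}-0.5)\ge 0$ for every $(l,j)\in\mathcal{I}$. Since $s_{lj}-0.5\in\{-0.5,0.5\}$ is nonzero, this is in turn equivalent to the sign-free inequality $\tilde A_s(y)_{lj}(s_{lj}-0.5)\ge 0$. Applying this at $y=x+\varepsilon v$, the membership $v\in\tilde D_s(x)$ becomes: there exists $\varepsilon>0$ such that for all $(l,j)\in\mathcal{I}$, $\tilde A_s(x+\varepsilon v)_{lj}(s_{lj}-0.5)\ge 0$.

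Next I would analyse this system by splitting over $\mathcal{I}=\tilde C_s(x)\cup(\mathcal{I}\setminus\tilde C_s(x))$. For a non-critical $(l,j)$, definition~\eqref{eq:subjectiveC} supplies some $\varepsilon_{lj}>0$ on which $\tilde H_s(\cdot)_{lj}$ is constant and hence equals $\tilde H_s(x)_{lj}$; combined with $s\in\tilde{\mathcal{S}}^C(x)$ this gives $\tilde A_s(y)_{lj}(s_{lj}-0.5)\ge 0$ for every $y\in B_{\varepsilon_{lj}}(x)$, so choosing $\varepsilon<\min_{(l,j)\notin\tilde C_s(x)}\varepsilon_{lj}/(\|v\|+1)$ makes these inequalities hold automatically. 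For a critical $(l,j)\in\tilde C_s(x)$, Lemma~\ref{lem:zero} yields $\tilde A_s(x)_{lj}=0$, and together with the affine form~\eqref{eq:explFormulation} this gives $\tilde A_s(x+\varepsilon v)_{lj}=\varepsilon\langle v,\tilde v_{s,l,j}\rangle=\varepsilon\,\tilde A_s(x+v)_{lj}$. Since $\varepsilon>0$, multiplying by $\varepsilon$ preserves the sign of the product with $(s_{lj}-0.5)$, so the critical inequality at $x+\varepsilon v$ holds if and only if it holds at $x+v$ — a condition independent of $\varepsilon$.

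Putting the two steps together, the existence of $\varepsilon>0$ making all $|\mathcal{I}|$ inequalities hold reduces to requiring only the critical inequalities at $\varepsilon=1$, giving exactly the right-hand side of~\eqref{eq:feasibleAlternativeDef}. The main subtle point is arranging one single $\varepsilon>0$ that simultaneously handles every non-critical index; this is why I would take a uniform $\varepsilon$ smaller than all of the finitely many $\varepsilon_{lj}/(\|v\|+1)$, using Lemma~\ref{lem:zero} to know the critical inequalities do not degrade under positive scaling. The trivial case $v=0$ is handled by noting that $0$ lies in both sets by hypothesis $s\in\mathcal{S}^C(x)$ and the vanishing of $\tilde A_s(x)_{lj}$ on $\tilde C_s(x)$.
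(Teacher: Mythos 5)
Your proposal is correct and follows essentially the same route as the paper's proof: both reduce membership $s\in\mathcal{S}^C(x+\varepsilon v)$ to the sign inequalities $\tilde A_s(\cdot)_{lj}(s_{lj}-0.5)\ge 0$ over all of $\mathcal{I}$ (the paper via Lemma~\ref{lem:compatibleReluArgumetns}, you via Theorem~\ref{thm:argumentRepresentation}), discharge the non-critical indices by local constancy of $\tilde H_s(\cdot)_{lj}$, and handle the critical indices by the scale-invariance coming from $\tilde A_s(x)_{lj}=0$ (your Lemma~\ref{lem:zero} plus equation~\eqref{eq:explFormulation} computation is exactly the paper's Lemma~\ref{lem:tildeV}). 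The only cosmetic difference is that you phrase it as a single chain of equivalences rather than two separate inclusions.
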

In particular for each $x\in\mathbb{R}^{n_0}$ and $s\in\mathcal{S}^C(x)$, the feasible directions form a \emph{convex cone}.
\subsubsection{Feasible axes and region separating axes for regular vertices}
\label{sec:feasibleAxes}
In the special case where $x\in\mathbb{R}^{n_0}$ is a regular vertex, for $s\in\mathcal{S}^C(x)$ the feasible directions $\tilde D_s(x)$ are the positive span of $|C(x) |$ linear independent vectors as we will show below.
By regularity and Theorem~\ref{thm:regularity}, the critical oriented normal vectors $(\tilde u_{s,l,j})_{(l,j)\in C(x)}$ are linearly independent. In particular, when put as columns in a matrix, the resulting square matrix $A\in\mathbb{R}^{n_0\times n_0}$ has an inverse $A^{-1}$ with rows denoted by $\tilde a_{x,s,l,j}\in\mathbb{R}^{n_0}$, $(l,j)\in C(x)$ satisfying
\begin{equation}
	\label{eq:axisCondition}
	\langle \tilde a_{x,s,l,j},\tilde u_{s,l',j'}\rangle=
	\begin{cases}
		1\quad&\textnormal{if }(l,j)=(l',j')\\
		0\quad&\textnormal{if }(l,j)\not =(l',j')
	\end{cases}\quad \textnormal{ for }(l,j),(l',j')\in C(x).
\end{equation}

For convenience we define the \emph{set of feasible axes} $\tilde{\mathcal{A}}_s(x)$ by
\begin{equation}
\label{eq:feasAx}
\tilde{\mathcal{A}}_s(x):= \left\{ \tilde a_{s,x,l,j}\middle\vert\; (l,j)\in C(x) \right\}.
\end{equation}

We now show that indeed the feasible directions are the positive span of the feasible axes.
\begin{lemma}
\label{lem:positiveSpan}
Let $x\in\mathbb{R}^{n_0}$ be a regular vertex
and $s\in\mathcal{S}^C(x)$. For any $v\in\mathbb{R}^{n_0}$ it holds that $v\in \tilde D_s(x)$ if and only if there exist non-negative coefficients $\alpha_{lj}$, $(l,j)\in C(x)$ such that $v=\sum_{(l,j)\in C(x)}\alpha_{lj}\tilde{a}_{s,x,l,j}$. 
\begin{proof}
	First note that $v=\sum_{(l,j)\in C(x)}\alpha_{lj}\tilde{a}_{s,x,l,j}$ for possibly negative coefficients $\alpha_{lj}$, $(l,j)\in C(x)$ by equations~\eqref{eq:tildeKernel}, \eqref{eq:df}, Lemma~\ref{lem:sameKernel} and the assumption $\textnormal{df}^C(x)=0$. By the compatibility assumption, the stability of the critical indices Theorem~\ref{thm:regularity}, Lemma~\ref{lem:tildeV} transforms equation~\eqref{eq:feasibleAlternativeDef} into
\begin{align*}
v\in \tilde D_s(x)&\iff\forall (l,j)\in C(x)\quad\langle v,\tilde v_{s,l,j}\rangle (s_{lj}-0.5)\ge 0\\
&\iff\forall (l,j)\in C(x)\quad\langle v,\tilde u_{s,l,j}\rangle \ge 0\\
&\iff\forall (l,j)\in C(x)\quad \sum_{(l',j')\in C(x)}\alpha_{l'j'}\langle \tilde a_{s,x,l',j'},\tilde u_{s,l,j}\rangle\ge 0
\end{align*} By equation~\eqref{eq:axisCondition} the latter condition evaluates to $\alpha_{l,j}\ge 0$ for $(l,j)\in C(x)$.
\end{proof}
\end{lemma}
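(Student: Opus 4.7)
The plan is to characterize $\tilde D_s(x)$ via the biorthogonal relationship between the oriented normal vectors and the feasible axes, then read off the non-negativity condition on the coefficients in the dual basis.

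First I would verify that the feasible axes $\{\tilde a_{s,x,l,j} : (l,j) \in C(x)\}$ actually form a basis of $\mathbb{R}^{n_0}$. Because $x$ is a regular point, Theorem~\ref{thm:regularity} gives $\tilde C_s(x) = C(x)$ and the normal vectors $(\tilde v_{s,l,j})_{(l,j)\in C(x)}$ are linearly independent, so $|C(x)| \leq n_0$. Because $x$ is a vertex, $\textnormal{df}^C(x) = 0$, and Lemma~\ref{lem:dfge} forces $|C(x)| \geq n_0$. Hence $|C(x)| = n_0$, the square matrix whose columns are the oriented normals is invertible by equation~\eqref{eq:orientedNormal}, and the rows of its inverse (the feasible axes) form the dual basis characterized by \eqref{eq:axisCondition}. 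Consequently any $v \in \mathbb{R}^{n_0}$ admits a unique expansion $v = \sum_{(l,j) \in C(x)} \alpha_{lj}\, \tilde a_{s,x,l,j}$ with real coefficients.

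Next I would translate membership in $\tilde D_s(x)$ into inequalities on the $\alpha_{lj}$. Lemma~\ref{lem:feasibleFormulation} recasts $\tilde D_s(x)$ as $\{v : \tilde A_s(x+v)_{lj}(s_{lj}-0.5) \geq 0 \text{ for all } (l,j)\in \tilde C_s(x)\}$. For $(l,j)\in C(x) = \tilde C_s(x)$, Lemma~\ref{lem:zero} gives $\tilde A_s(x)_{lj} = 0$, so Lemma~\ref{lem:tildeV} yields $\tilde A_s(x+v)_{lj} = \langle v, \tilde v_{s,l,j}\rangle$. The definition \eqref{eq:orientedNormal} of the oriented normal then rewrites the feasibility condition as $\langle v, \tilde u_{s,l,j}\rangle \geq 0$ for each $(l,j) \in C(x)$. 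Substituting the dual-basis expansion of $v$ and invoking the biorthogonality \eqref{eq:axisCondition} collapses each inner product to $\alpha_{lj}$, giving $v \in \tilde D_s(x) \iff \alpha_{lj} \geq 0$ for every $(l,j) \in C(x)$, which is the desired equivalence.

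The main obstacle is the first step: extracting $|C(x)| = n_0$ from the combined regularity and vertex hypotheses, since this is where the existence and basis property of the feasible axes really gets used — without it, the family $\{\tilde a_{s,x,l,j}\}$ might not span $\mathbb{R}^{n_0}$ and the ``if and only if'' could fail in the ``only if'' direction. Once this dimensional bookkeeping is in place, the remainder is a routine assembly of Lemmas~\ref{lem:zero}, \ref{lem:tildeV}, and~\ref{lem:feasibleFormulation} together with the defining duality relation of the feasible axes.
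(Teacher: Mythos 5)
Your proposal is correct and follows essentially the same route as the paper's proof: establish that $|C(x)|=n_0$ so the feasible axes form a (dual) basis, then use Lemma~\ref{lem:feasibleFormulation} together with Lemma~\ref{lem:tildeV}, the definition of the oriented normals, and the biorthogonality relation \eqref{eq:axisCondition} to convert feasibility into non-negativity of the coefficients. Your explicit use of Lemma~\ref{lem:dfge} to pin down $|C(x)|=n_0$ is a slightly more detailed account of the dimensional bookkeeping that the paper compresses into its first sentence, but the argument is the same.
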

The following result shows that different compatible signatures share the same axes at indices with the same activations.
\begin{theorem}
	\label{thm:identicalAxes}
	Let $x\in\mathbb{R}^{n_0}$ be a regular vertex, $(l,j)\in C(x)$ and $s,s'\in\mathcal{S}^C(x)$ with $s_{lj}=s'_{lj}\in\left\{ 0,1 \right\}$. It holds that 
		$\tilde a_{s,x,l,j}=\tilde a_{s',x,l,j}$.
\end{theorem}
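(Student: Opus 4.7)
The plan is to verify that $\tilde a_{s,x,l,j}$ satisfies the characterizing system \eqref{eq:axisCondition} of $\tilde a_{s',x,l,j}$ and then invoke the uniqueness afforded by regularity. First I would use Theorem~\ref{thm:regularity} to replace $\tilde C_s(x)$ and $\tilde C_{s'}(x)$ by the common set $C(x)$. Pick $\varepsilon>0$ small and set $y=x+\varepsilon\tilde a_{s,x,l,j}$. By Lemma~\ref{lem:positiveSpan} we have $\tilde a_{s,x,l,j}\in\tilde D_s(x)$, hence $s\in\mathcal{S}^C(y)$ for $\varepsilon$ small. Using \eqref{eq:axisCondition} and Lemma~\ref{lem:tildeV}, one computes $\tilde A_s(y)_{l',j'}=0$ for $(l',j')\in C(x)\setminus\{(l,j)\}$ and $\tilde A_s(y)_{l,j}=\varepsilon(2s_{lj}-1)$. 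Theorem~\ref{thm:argumentRepresentation}(2) converts these into equal values of $A(y)_{l',j'}$, so $H(y)_{l',j'}=0$ off $(l,j)$ and $H(y)_{l,j}$ has sign $2s_{lj}-1$.

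Next I would verify $s'\in\mathcal{S}^C(y)$. For non-critical $(l',j')\notin C(x)$, the hyperplane pattern is locally constant, so compatibility transfers from $x$ to $y$; for critical indices other than $(l,j)$, $H(y)_{l',j'}=0$ is compatible with any activation; and at $(l,j)$ the hypothesis $s'_{lj}=s_{lj}$ matches the sign of $H(y)_{l,j}$ determined above. Applying Theorem~\ref{thm:argumentRepresentation}(2) a second time — this time with $s'$ — gives $\tilde A_{s'}(y)=A(y)=\tilde A_s(y)$.

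Finally, applying Lemma~\ref{lem:tildeV} to $\tilde A_{s'}$ at $x$ (valid because $C(x)=\tilde C_{s'}(x)$) translates this identity into $\langle \tilde a_{s,x,l,j},\tilde v_{s',l',j'}\rangle=0$ for $(l',j')\in C(x)\setminus\{(l,j)\}$ and $\langle \tilde a_{s,x,l,j},\tilde v_{s',l,j}\rangle=2s_{lj}-1$. Since $s_{lj}=s'_{lj}$, multiplying each relation by $(2s'_{lj}-1)$ and using the definition \eqref{eq:orientedNormal} yields $\langle \tilde a_{s,x,l,j},\tilde u_{s',l',j'}\rangle=\delta_{(l,j),(l',j')}$ for every $(l',j')\in C(x)$. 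By regularity the oriented normals $(\tilde u_{s',l',j'})_{(l',j')\in C(x)}$ are linearly independent, so the rows of the inverse of the associated matrix are uniquely determined by \eqref{eq:axisCondition}; hence $\tilde a_{s,x,l,j}=\tilde a_{s',x,l,j}$.

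The main obstacle will be the compatibility check for $s'$ at $y$: the argument goes through precisely because the axis $\tilde a_{s,x,l,j}$ is chosen to lie exactly on the remaining critical hyperplanes (so those coordinates of $H(y)$ vanish and compatibility is automatic), while the sign of $H(y)_{l,j}$ is dictated by $s_{lj}$, which by assumption equals $s'_{lj}$. Once this compatibility is in place, the proof is a routine transfer between the $s$- and $s'$-representations through two applications of Theorem~\ref{thm:argumentRepresentation}(2).
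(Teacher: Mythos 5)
Your proposal is correct and follows essentially the same route as the paper's proof: both evaluate at a point $x+\varepsilon\,\tilde a_{s,x,l,j}$ on the axis, show that $s$ and $s'$ are simultaneously compatible there (the paper routes this through Corollary~\ref{cor:sameHyperplanePatternSet}, you check it coordinate-wise), apply Theorem~\ref{thm:argumentRepresentation} to get $\tilde A_{s'}=A=\tilde A_s$ at that point, and conclude via the uniqueness of the system \eqref{eq:axisCondition}. Your version is, if anything, slightly more careful about the sign bookkeeping between $\tilde v$ and $\tilde u$.
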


The above theorem is an important result that we will use in Section~\ref{sec:updatingFeasibleAxes} to compute the axes $\tilde{\mathcal{A}}_{s'}(x)$ for a new activation pattern $s'\in\mathcal{S}^C(x)$ when we already know the axes $\tilde{\mathcal{A}}_{s}(x)$ for another activation pattern $s\in\mathcal{S}^C(x)$ that only differs at one specific neuron position $(l^*,j^*)\in\mathcal{I}$ from $s'$. By the above result, in this case all but one axis are the same. 
Furthermore, the theorem allows us to define the set 
\begin{equation}
	\mathcal{A}(x)=\left\{ a_{r,x,l,j}\in\mathbb{R}^{n_0}\mid\;r\in\left\{ 0,1 \right\}, (l,j)\in C(x) \right\}.
\end{equation}
of $2n_0$ different \emph{region separating axes} satisfying
\begin{equation}
	\label{eq:feasibleRegionseparating}
	a_{x,s_{lj},l,j}=\tilde a_{x,s,l,j}	\quad \textnormal{ for }(l,j)\in C(x), s\in\mathcal{S}^C(x).
\end{equation}
Note that the feasible axes $\tilde{\mathcal{A}}_s(x)$ are sets of $n_0$ vectors specific to $s\in\mathcal{S}^{C}(x)$ whereas the region separating axes $\mathcal{A}(x)$ are a set of $2n_{0}$ vectors not specific to some activation. 
\begin{example}
\label{ex:axes}
Let $L=2$, $(n_0,n_1,n_2,n_3)=(2,2,1,1)$, $b_1=(0,0)^\intercal$, 
$W_1=
{
\tiny
\begin{pmatrix}
1&0\\
0&1
\end{pmatrix}
}$, $W_2=(1,-1)$, $b_2=-1$, $W_3=(1)$, $b_3=0$. In this case it holds that 
\begin{align}
	h_1(x_1,x_2)&= (\textnormal{ReLU}(x_1),\textnormal{ReLU}(x_2)),\\
A(x)_{2,1}&= \textnormal{ReLU}(x_1)-\textnormal{ReLU}(x_2)-1.
\end{align}
In particular $A(x)_{2,1}=0$ if and only if either $x_2<0$ and $x_1=1$ or $x_2\ge 0$ and $x_1=1+x_2$. This is the reason for the kink in Figure~\ref{fig:axes} at $x^*=(1,0)$. The corresponding critical indices are $C(x^*)=\left\{ (1,2),(2,1) \right\}$ and the region separating axes are given by $a_{x^*,0,1,2}=(0,-1)$, $a_{x^*,1,1,2}=(1,1)$, $a_{x^*,0,2,1}=(-1,0)$, $a_{x^*,1,2,1}=(1,0)$.
\end{example}
\begin{figure}[htpb]
\centering
\includegraphics[width=0.6\linewidth]{./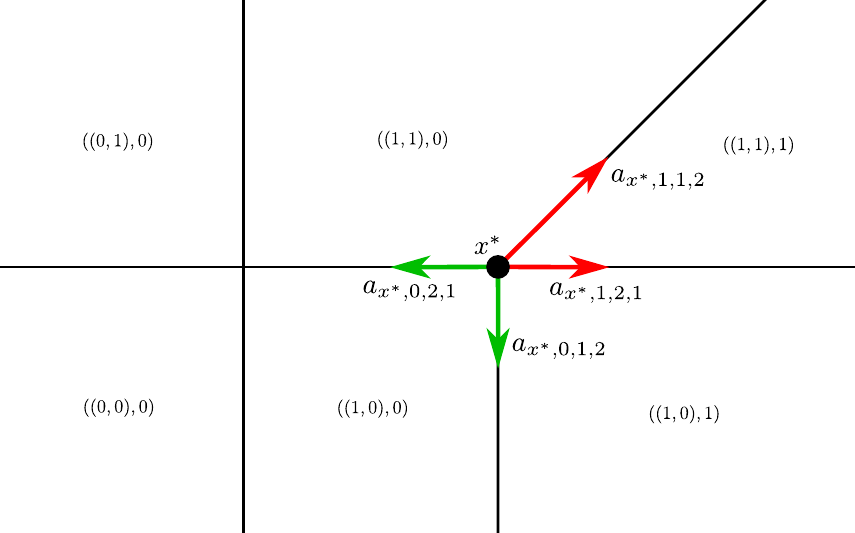}
\caption{Illustration of the region separating axes. The black lines represent the sets $A(x_1,x_2)_{lj}=0$ for $(l,j)\in\left\{ (1,1),(1,2),(2,1) \right\}$. The tuples indicate the activation pattern of affine regions of the neural network. The arrows are the region separating axes at position $x^*=(1,0)$. 
	The red region separating axes represent the feasible axes $\tilde{\mathcal{A}}_s(x^*)$ for the activation pattern $s=((1,1),1)$ whereas the green axes correspond to ${\tilde{\mathcal{A}}}_{ s'}(x^*)$ for $s'=( (1,0),0)$.
}
\label{fig:axes}
\end{figure}

\subsection{Local minima in regular networks}
Gradient descent algorithms are usually based on a computed gradient at the current position in the argument space $\mathbb{R}^{n_0}$. However our considered function $f$ is piece-wise affine and therefore the gradient is constant on each such affine region and not well-defined at the boundaries of these regions. It solely depends on which neurons are active such that we define the gradient specific to the activation pattern $s\in\mathcal{S}$ by 
\begin{equation}
\label{eq:gradient}
\nabla_s=\left(W_{L+1}\textnormal{diag}(s_L)W_L\cdots\textnormal{diag}(s_1)W_1  \right)^\intercal.
\end{equation}
With this definition it holds that
\begin{equation}
\tilde f_s(x)= \langle x,\nabla_s\rangle+\sum_{l=1}^{L+1}W_{L+1}\textnormal{diag}(s_L)\cdots W_{l+1}\textnormal{diag}(s_l)b_l
\end{equation}for $x\in\mathbb{R}^{n_0}$ and a compatible activation pattern $s\in\mathcal{S}^C(x)$ by equation~\eqref{eq:tildef}. In particular by Corollary~\ref{cor:representation} it follows that $\nabla_s$ is indeed equal to the gradient $\nabla f(x)$ at values $x\in\mathbb{R}^{n_0}$ where the gradient is well-defined and the activation pattern is $S(x)=s$.
Note that $\nabla_s$ depends on the weight matrices $W_1,\ldots,W_{L+1}$ but not on the bias vectors $b_1,\ldots,b_{L+1}$ of the neural network $f$.
Below we present a sufficient and necessary condition when a regular vertex is a local minimum of $f$
\begin{proposition}
\label{prop:localMinimum}
Let $x\in\mathbb{R}^{n_0}$ be a regular vertex. Then 
\begin{align*}
&\exists \varepsilon>0: f(x)=\inf\left\{ f(y)\middle\vert\; y\in B_\varepsilon(x) \right\}\\
\iff& \forall a\in\mathcal{A}(x)\;\exists s\in\mathcal{S}^C(x), (l,j)\in \tilde C_s(x) \textnormal{ with }a=\tilde a_{x,s,l,j}\textnormal{ and }\langle \tilde a_{x,s,l,j},\nabla_s\rangle\ge 0.
\end{align*}
\end{proposition}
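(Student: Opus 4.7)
My approach is to translate the local minimality condition at $x$ into a statement about one-sided directional derivatives of $f$ along the $2n_0$ region separating axes in $\mathcal{A}(x)$, and then to show that each such directional derivative admits several equivalent representations of the form $\langle a,\nabla_s\rangle$, which is precisely what allows the existential quantifier on the right-hand side of the proposition. The scaffolding is provided by Lemma~\ref{lem:positiveSpan}, which describes the feasible cone $\tilde D_s(x)$ as the nonnegative span of the feasible axes $\tilde{\mathcal A}_s(x)$, and by Theorem~\ref{thm:identicalAxes}, which tells us that $\tilde a_{x,s,l,j}$ depends among the coordinates of $s$ only on $s_{lj}$.

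\textbf{Key step.} The heart of the argument is to show that for any region separating axis $a=a_{x,r,l,j}\in\mathcal{A}(x)$ and any $s\in\mathcal{S}^C(x)$ with $s_{lj}=r$, the inner product $\langle a,\nabla_s\rangle$ equals the one-sided directional derivative of $f$ at $x$ along $a$. By equation~\eqref{eq:feasibleRegionseparating} we have $\tilde a_{x,s,l,j}=a$, so $a\in\tilde{\mathcal A}_s(x)\subset\tilde D_s(x)$ by Lemma~\ref{lem:positiveSpan}; by the definition of the feasible cone, $s\in\mathcal{S}^C(x+ta)$ for all small $t>0$, and Corollary~\ref{cor:representation} together with the affinity of $\tilde f_s$ then yields $f(x+ta)-f(x)=\tilde f_s(x+ta)-\tilde f_s(x)=t\langle a,\nabla_s\rangle$. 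In particular $\langle a,\nabla_s\rangle$ depends on $s$ only through $s_{lj}$.

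\textbf{Both implications.} For $(\Rightarrow)$, local minimality forces the directional derivative along every $a\in\mathcal{A}(x)$ to be nonnegative; writing $a=a_{x,r,l,j}$, at least one $s\in\mathcal{S}^C(x)$ with $s_{lj}=r$ exists, since $(l,j)\in C(x)$ forces $H(x)_{lj}=0$ by Lemma~\ref{lem:zero} so that both values of $s_{lj}$ are compatible. The key step then realizes the right-hand side with this $s$ and index $(l,j)$. For $(\Leftarrow)$, let $y$ be close enough to $x$ and set $s^*=S(y)$. Continuity of $A$ makes $s^*\in\mathcal{S}^C(x)$ for $y$ near enough, and trivially $s^*\in\mathcal{S}^C(y)$, so $y-x\in\tilde D_{s^*}(x)$ and Corollary~\ref{cor:representation} yields $f(y)-f(x)=\langle y-x,\nabla_{s^*}\rangle$. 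By Lemma~\ref{lem:positiveSpan}, $y-x=\sum_{(l,j)\in C(x)}\alpha_{lj}\tilde a_{x,s^*,l,j}$ with $\alpha_{lj}\ge 0$; each axis equals some $a\in\mathcal{A}(x)$, the hypothesis furnishes a compatible $s'$ with $\langle a,\nabla_{s'}\rangle\ge 0$, and the key step identifies this value with $\langle \tilde a_{x,s^*,l,j},\nabla_{s^*}\rangle$. Summing over $(l,j)\in C(x)$ with nonnegative coefficients concludes $f(y)\ge f(x)$.

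\textbf{Main obstacle.} The delicate point, and the whole reason the right-hand side of the proposition can require only a single witnessing $s$ rather than all of them, is the fact established in the key step that $\langle a,\nabla_s\rangle$ is independent of the coordinates of $s$ other than $s_{lj}$. Establishing this cleanly requires the stability of feasible axes from Theorem~\ref{thm:identicalAxes} combined with the function-value stability from Corollary~\ref{cor:representation}; isolating this fact is where the substance of the proof lies.
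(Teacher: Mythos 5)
Your proof is correct and follows essentially the same route as the paper's: the paper packages your ``key step'' as Lemma~\ref{lem:oneSided} (identifying $\langle a,\nabla_s\rangle$ with the one-sided directional derivative of $f$ along $a$, hence its independence of the witnessing compatible $s$) and your backward direction as Corollary~\ref{cor:localexpansion} (the expansion of $f(x+v)-f(x)$ as a nonnegative combination of these directional derivatives via Lemma~\ref{lem:positiveSpan}). The only cosmetic looseness is the phrase ``by the definition of the feasible cone, $s\in\mathcal{S}^C(x+ta)$ for all small $t>0$,'' which strictly requires the linear-inequality characterization of Lemma~\ref{lem:feasibleFormulation} (or the segment argument in Lemma~\ref{lem:oneSided}) rather than the bare existential in the definition of $\tilde D_s(x)$, but this is immediate and not a gap.
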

Intuitively the above result states that a regular vertex $x\in\mathbb{R}^{n_0}$ is a local minimum of $f$ if and only if for any region separating axis $a\in\mathcal{A}(x)$, $f$ is non-decreasing locally in the direction $a$. Furthermore, for every axis $a_{x,r,l,j}$ for $r\in\left\{ 0,1 \right\}, (l,j)\in C(x)$ the test whether $f$ is non-decreasing locally in direction of the axis $a_{x,r,l,j}$ can be carried out by checking for $\langle \tilde a_{x,s,l,j},\nabla_s\rangle\ge0$ where $s\in\mathcal{S}^C(x)$ with $s_{lj}=r$ by equation~\eqref{eq:feasibleRegionseparating}. In particular, this result allows to check for a local minimum using $2n_0$ inner products involving subjective quantities specific to compatible activation patterns.  

We will use Proposition~\ref{prop:localMinimum} in the stopping criterion for our algorithm presented in Section~\ref{sec:drlp}. For regular networks, this condition will exactly identify vertices that are local minima. Note that the piece-wise affinity of the function $f$ implies that only vertices are interesting candidates for local minima. 
\begin{lemma}
\label{lem:minimumVertex}
Every strict local minimum of $f$ is a vertex and for every non-strict local minimum $x\in\mathbb{R}^{n_0}$ there exist a vertex $x'\in\mathbb{R}^{n_0}$ with $f(x)=f(x')$.
\end{lemma}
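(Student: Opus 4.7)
The plan is to dispatch the two assertions separately: the strict case by a direct contradiction along a nonzero critical-kernel direction, and the non-strict case by induction on $\textnormal{df}^C(x)$.

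For the first claim I would argue by contraposition. Assume $x$ is a local minimum with $\textnormal{df}^C(x)\ge 1$ and pick $v\in\textnormal{Ker}^C(x)\setminus\{0\}$ together with a compatible $s\in\mathcal{S}^C(x)$. Combining Lemma~\ref{lem:hyperKernelEquality} with Corollary~\ref{cor:sameHyperplanePatternSet} yields an $\varepsilon^*>0$ such that $s\in\mathcal{S}^C(x+tv)$ for $|t|<\varepsilon^*$, whereupon Corollary~\ref{cor:representation} gives $f(x+tv)=\tilde f_s(x+tv)=f(x)+t\langle v,\nabla_s\rangle$ on that interval. If the slope $\langle v,\nabla_s\rangle$ is nonzero, one of $x\pm\varepsilon v$ attains a value strictly below $f(x)$, contradicting local minimality; if it vanishes, $f$ is constant on the segment, so $x$ cannot be a strict local minimum. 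Either way, any strict local minimum satisfies $\textnormal{df}^C(x)=0$ and is therefore a vertex.

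For the second claim I would induct on $\textnormal{df}^C(x)$. The base case is immediate with $x'=x$. In the inductive step, the slope computation above shows that $\langle v,\nabla_s\rangle=0$ at a local minimum, so $f$ is locally constant along the ray $t\mapsto x+tv$. I would then extend: by Lemma~\ref{lem:tildeV} the subjective argument $\tilde A_s(x+tv)_{lj}$ is identically zero for $(l,j)\in\tilde C_s(x)$ and an affine function of $t$ otherwise. Let $t^*>0$ be the smallest $t$ at which some previously non-critical argument reaches zero, and set $x_1:=x+t^*v$. Then $s$ remains compatible at $x_1$, $\tilde C_s(x_1)\supsetneq\tilde C_s(x)$, and $f(x_1)=f(x)$ by the affinity-with-zero-slope of $f$ along the segment. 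Under the regularity assumption of this section the enlarged collection of critical normal vectors is still linearly independent, giving $\textnormal{df}^C(x_1)<\textnormal{df}^C(x)$. A short separate argument shows that $x_1$ is itself a local minimum of $f$: any $y$ near $x_1$ with $f(y)<f(x_1)$ would, combined with the constancy of $f$ on the segment $[x,x_1]$ and continuity, produce points arbitrarily close to $x$ with $f$-values below $f(x)$, contradicting local minimality at $x$. The inductive hypothesis applied to $x_1$ then supplies the desired vertex.

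The main obstacle is twofold. First, one must ensure the stopping time $t^*$ is finite, which fails only when $v$ is orthogonal to every normal vector $\tilde v_{s,l,j}$; in that case $f$ is constant on the entire line $x+\mathbb{R}v$ and no point of that line has $\textnormal{df}^C=0$, so either a better $v\in\textnormal{Ker}^C(x)$ (possibly $-v$) is chosen to avoid the orthogonality or one appeals to the implicit standing assumption that $f$ admits at least one vertex. Second, transporting local minimality from $x$ to $x_1$ is delicate because a neighborhood of $x_1$ is not just a translate of a neighborhood of $x$ -- the two lie in different affine regions -- but the sketch above using continuity together with constancy along $[x,x_1]$ settles it. Everything else is bookkeeping with the stability and normal-vector results established earlier in this section.
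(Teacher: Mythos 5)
Your first part, and the overall strategy of the second (descend along critical-kernel directions so that $\textnormal{df}^C$ drops at each pattern change), coincide with the paper's proof. But there is a genuine gap in your inductive step: the point $x_1=x+t^*v$ need \emph{not} be a local minimum of $f$, and the continuity argument you sketch for transporting local minimality is not valid. Take $f(t)=-\textnormal{ReLU}(t-1)$ on $\mathbb{R}$ with $x=0$, $v=1$: then $x_1=1$, every point slightly to the right of $x_1$ has $f<f(x_1)$, yet no point near $x=0$ has $f<f(x)$ --- constancy of $f$ on $[x,x_1]$ plus continuity yields nothing, because the offending points cluster near $x_1$, not near $x$. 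In two dimensions the same phenomenon occurs with $x_1$ not yet a vertex (e.g.\ $f(x,y)=-\textnormal{ReLU}(x-1)+2\,\textnormal{ReLU}(x+y-3)+2\,\textnormal{ReLU}(x-y-3)$, $x=(0,0)$, $v=(1,0)$, $x_1=(1,0)$), so your induction hypothesis really cannot be applied to $x_1$. The repair is to weaken the invariant: what propagates is not local minimality but the statement that some $s$ compatible at $x_k$ satisfies $f(x_k)=f(x)$ and $\langle w,\nabla_s\rangle=0$ for all $w\in\textnormal{Ker}^C(x_k)$. This holds at $x$ because $f$ is affine on $x+\textnormal{Ker}^C(x)$ near $x$ and $x$ is a local minimum; it persists because $s$ stays compatible at $x_1$ (the only hyperplane-pattern entries that change become $0$), $\tilde C_s(x_1)\supseteq\tilde C_s(x)$, hence $\textnormal{Ker}^C(x_1)\subseteq\textnormal{Ker}^C(x)$, so the directional derivative along any $v_1\in\textnormal{Ker}^C(x_1)$ is still $\langle v_1,\nabla_s\rangle=0$. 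This is what the paper's laconic ``repetition of this procedure'' amounts to.

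Two smaller points. The dimension drop $\textnormal{df}^C(x_1)<\textnormal{df}^C(x)$ needs no regularity (the lemma does not assume it, so your appeal to it is out of place): the newly critical neuron's subjective argument is a non-constant affine function of $t$ along $v$, so $\langle v,\tilde v_{s,l',j'}\rangle\neq0$, whence $v\in\textnormal{Ker}^C(x)\setminus\textnormal{Ker}^C(x_1)$ and the inclusion of kernels is strict. Finally, you are right to flag that $t^*$ may be infinite; the paper's own proof shares this gap ($f(x,y)=-\textnormal{ReLU}(x-1)$ has a non-strict local minimum at the origin but no vertex at all, so the statement fails without an extra hypothesis), and neither replacing $v$ by $-v$ nor by another element of $\textnormal{Ker}^C(x)$ repairs it in general --- an assumption such as the existence of a vertex, which the paper only imposes later for the algorithm, is genuinely needed there.
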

\subsection{Discussion}
We have analyzed the affine linear structure of ReLU feed-forward neural networks. Our main contribution here are the introduction of precise mathematical quantities and the analysis how these quantities can be obtained from a subjective view on the neural network induced by explicitly specified neuron activations. More precisely we distinguish between objective and subjective versions of such quantities at some input $x$ where the subjective version is specific to an activation pattern compatible with $x$ i.e. where neurons are considered active/inactive when the input of their ReLU activation function is non-negative/non-positive respectively. In case of equality, we call the corresponding quantity \emph{stable}.

At a given input $x\in\mathbb{R}^{n_0}$ such quantities with an objective and a subjective version are for example the critical indices corresponding to those neurons that locally change their activity in every small neighbourhood around $x$, and the critical kernel, i.e. the linear subspace of directions in the input space $\mathbb{R}^{n_0}$ that do not change the argument of the critical indices' ReLU activation functions.
Regularity of a point means that the subjective critical normal vectors are linearly independent for all compatible activation patterns. By our Theorem~\ref{thm:regularity} this automatically guarantees that the objective critical indices and all the subjective the critical indices specific are identical. In particular, the critical indices are stable. Figure~\ref{fig:impossible} depicts two vertex constellations that are impossible as a consequence. This makes the critical indices a general property obtainable from a subjective view on the neural network for a fixed compatible activation pattern.  Note that this property is essential because it shows that the subjective critical indices are unaffected by a compatible change of the activation pattern which allows algorithms to keep track of the subjective critical indices as a state parameter which does not need to be updated for such changes. We will exploit this fact in our Deep ReLU Simplex algorithm explained in Section~\ref{sec:drls}.
\begin{figure}[htpb]
\centering
\includegraphics[width=0.6\linewidth]{./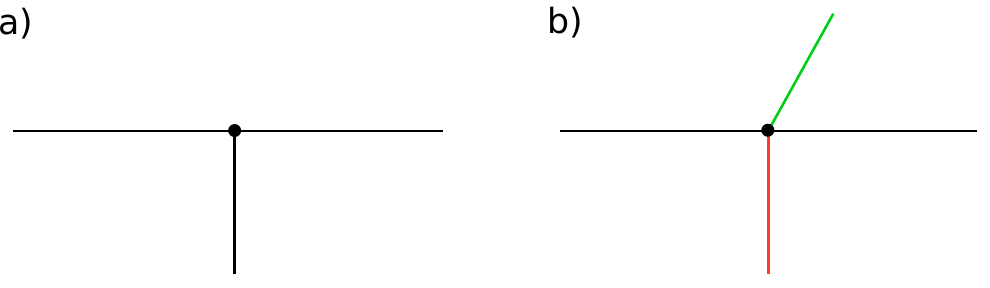}
\caption{
Impossible vertex constellations by the third and fourth row in Table~\ref{tab:stability}. In a) the black dot is not a vertex subjectively seen from the upper adjacent region, in b) the red and green segments separate adjacent affine regions but are meant to correspond to zero sets of different neurons. This is impossible because for regular points, always the same subjective neurons are critical for any compatible activation pattern\protect\footnotemark. 
}
\label{fig:impossible}
\end{figure}
\footnotetext{For non-regular points, the auxiliary Lemma~\ref{lem:collinear} shows that a neuron that is only subjectively critical for some activation patterns, has a normal vector collinear to the normal vector of another neuron such that b) in Figure~\ref{fig:impossible} is still impossible}
\begin{table}[htpb]
	\centering
	\begin{tabular}{|l|l|l|}
		\hline
		Quantity & Stability & Reference\\
		\hline
		ReLU arguments in neurons & $\tilde A_s(x)=A(x)$&Theorem~\ref{thm:argumentRepresentation}\\
		Value of objective function & $\tilde f_s(x)= f(x)$&Corollary~\ref{cor:representation}\\
		Critical kernel &$\tilde{\textnormal{Ker}}^C_s(x)=\textnormal{Ker}^C(x)$ & Lemma~\ref{lem:sameKernel}\\
		Critical indices & $\tilde C_s(x)=C(x)$ for regular $x$&Theorem~\ref{thm:regularity}\\
		\hline
	\end{tabular}
	\caption{Stability of certain concepts for $x\in\mathbb{R}^{n_0}$ and compatible $s\in \mathcal{S}^C(x)$}
	\label{tab:stability}
\end{table}

We have summarized our stability results of several quantities derived in the previous sections in Table~\ref{tab:stability}. These quantities are  specifically interesting for a rigorous analysis and construction of linear programming and active-set related algorithms applied to functions with the structure of a ReLU feed-forward neural network and their stability justifies their deduction based on a subjective view on the network's hyperplanes induced by a fixed imposed activation pattern of the network's ReLU neurons which can thus serve as a state parameter in the corresponding algorithm. In this context, regularity is relevant which we showed is almost surely satisfied for continuous random network parameters and is therefore not a strong requirement. Just as intuition suggests, regular vertices in $\mathbb{R}^{n_0}$ have $2^{n_0}$ adjacent affine regions separated by $2n_0$ axes that ensure a local minimum if their directed derivatives of the objective neural network $f$ are all non-negative.
\section{Algorithmic Primitives for Deep ReLU Programming}
\label{sec:algPrimitives}
In this section we will describe how solver algorithms for deep ReLU programming problems can be implemented. We first describe basic algorithmic routines in Section~\ref{sec:algPrimitives} that will be used as building blocks in Section~\ref{sec:drls} where we describe our DRLSimplex algorithm in detail and provide pseudo-code. With these routines we want to demonstrate how certain quantities can be computed but do not focus on numerical stability.
\subsection{Computation of the gradient}
\label{sec:computationGradient}
For every activation pattern $s\in\mathcal{S}$ we can compute the corresponding gradient $\nabla_s\in\mathbb{R}^{n_0}$ in computational complexity order $\mathcal{O}(\sum_{l=1}^{L}n_ln_{l-1})$ by using equation~\eqref{eq:gradient}.
We will denote the function that computes this gradient by $\textsc{Gradient}$.
\subsection{Computation of an oriented normal vector}
\label{sec:computationNormal}
For an activation pattern $s\in\mathcal{S}$ and a neuron position index $(l,j)\in\mathcal{I}$ the complexity for the computation of the normal vector $\tilde v_{s,l,j}$ is of order $\mathcal{O}(n_0n_1+\ldots+n_{l-1}n_l)$. This follows from equation~\eqref{eq:v} if we compute the matrix product from the left side starting with the $j$-th row of $W_l$. The same holds for the oriented normal vector $\tilde u_{s,l,j}$ by equation~\eqref{eq:orientedNormal}. In our pseudo-code we will denote this procedure by $\textsc{OrientedNormalVec}$ taking as parameters the activation pattern $s$ and the neuron position $(l,j)$.
\subsection{Computing inner products with all oriented normal vectors}
\label{sec:multipleInner}
Assume we have an activation pattern $s\in\mathcal{S}$ and a vector $w\in\mathbb{R}^{n_0}$. We want to compute the inner products $\langle w,\tilde v_{s,l,j}\rangle$ for all $(l,j)\in\mathcal{I}$. Then each of these inner products can be computed as explained in Section~\ref{sec:computationNormal} in $\mathcal{O}(\sum_{\ell=1}^{l}(n_{\ell}n_{\ell-1}))$. However there exists a more efficient way to compute all such inner products at once in the same complexity order. From equation~\eqref{eq:v} it follows that 
\begin{equation}
	\label{eq:normalvectors}
	\langle w,\tilde v_{s,l,j} \rangle=\left( W_l\textnormal{diag}(s_{l-1})W_{l-1}\cdots \textnormal{diag}(s_1)W_1 w \right)_{j}\quad\textnormal{ for }(l,j)\in\mathcal{I}.
\end{equation}In particular the desired inner products can be computed by pushing the vector $w$ through the network with fixed activation pattern $s$ but with bias terms set to zero and observing the computed values at all neuron positions $\mathcal{I}$. More precisely, we successively compute the above matrix vector product from the right side as $u_1:= W_1w$, $u_{l+1}:=W_{l+1}\textnormal{diag}(s_l)u_l$ for $l\in\left\{ 1,\ldots,L-1 \right\}$ and read the corresponding entries $(u_{l})_{j}$ for $(l,j)\in\mathcal{I}$. According to equation~\eqref{eq:orientedNormal}, depending on the activation $s_{lj}$ the sign in equation~\eqref{eq:normalvectors} needs to be flipped to obtain the inner products with the oriented normal vectors. Since this change of the sign, does not increase the overall complexity order, the computation of all inner products is of order $\mathcal{O}(\sum_{l=1}^{L}n_ln_{l-1})$.
\subsection{Projection using the pseudoinverse of some oriented normal vectors}
\label{sec:project}
Assume we have a number $m\le n_0$ of distinct neuron positions $(l_1,j_1),\ldots,(l_m,j_m)\in\mathcal{I}$, an activation pattern $s\in\mathcal{S}$ and the pseudoinverse $A^+\in\mathbb{R}^{m\times n_0}$ of the matrix whose linearly independent columns are the oriented normal vectors $\tilde u_{s,l_1,j_1},\ldots,\tilde u_{s,l_{m},j_m}$. We can project a vector $v\in\mathbb{R}^{n_0}$ onto the linear span of $\tilde u_{s,l_1,j_1},\ldots,\tilde u_{s,l_{m},j_m}$ by computing $(A^+)^{\intercal} w$ where $w=(\langle v, \tilde u_{s,l_1,j_1},\rangle,\ldots,\langle v,\tilde u_{s,l_{m},j_m}\rangle)\in\mathbb{R}^{n_0} $. By Section~\ref{sec:multipleInner} $w$ can be computed in $\mathcal{O}(\sum_{l=1}^L n_ln_{l-1})$ such that the computational complexity for the computation of this projection is of order $\mathcal{O}(\sum_{l=1}^L n_ln_{l-1}+n_0^2)$. We will denote this procedure by $\textsc{Project}(\texttt{A}^+,\texttt{C},\texttt{s},\texttt{v})$ where $\texttt{A}^+$ is the pseudoinverse matrix, $\texttt{C}$ is the list of neuron positions, $\texttt{s}$ is the activation pattern and $\texttt{v}$ is the vector to project.
\subsection{Updating the pseudoinverse for an additional column}
\label{sec:addColumn}
Given $m\in \left\{ 0,\ldots,n_0-1 \right\}$, a matrix $A\in\mathbb{R}^{n_0\times m}$ with $\textnormal{rank}(A)=m$ and its pseudoinverse $A^+=(A^\intercal A)^{-1}A^{\intercal}$ we can add a given additional linear independent column $w\in\mathbb{R}^{n_0}$ to $A$ and compute the corresponding pseudoinverse in computational complexity $\mathcal{O}(mn_0)$. This can be done by first projecting $w$ onto the column space of $A$ by computing $w_{\|}=(A^+)^\intercal A^\intercal w$ and taking the orthogonal part $w_{\perp}=w-w_{\|}$. If we denote by $\tilde A\in\mathbb{R}^{n_0\times (m+1)}$ the matrix that has the same columns as $A$ and an additional $(m+1)$-th column equal to $w$ then its pseudoinverse $\tilde A^+$ can be constructed as follows. The $(m+1)$-th row of $\tilde A^+$ is equal to $\langle w_{\perp},w\rangle^{-1} w_\perp$. Furthermore, if we denote the $i$-th row of $A^+$ by $a^+_i$ then the $i$-th row of $\tilde A^+$ is equal to $a^{+}_i-\langle a^+_i,w\rangle\langle w_{\perp},w\rangle^{-1} w_\perp$ for $i\in\left\{ 1,\ldots,m \right\}$. One easily checks that indeed $\tilde A^+ \tilde A=\textnormal{Id}_{m+1}$ and that the column space of $\tilde A$ and the row space of $\tilde A^+$ are equal. Obviously, given $A$, $A^+$ and $w$, this construction of $\tilde A^+$ is of order $\mathcal{O}(mn_0)$. 

In our pseudo-code we will use a procedure $\textsc{AddPseudorow}$ that takes the following parameters. The first parameter $\texttt{A}^+$ is the pseudoinverse of the matrix $A$ with the linearly independent oriented normal vectors $\tilde u_{\texttt{s},l_1,j_1},\ldots,\tilde u_{\texttt{s},l_{|\texttt{C}|},j_{|\texttt{C}|}}$ as columns where the ordered collection $\texttt{C}=\left( (l_1,j_1,),\ldots,(l_{|\texttt{C}|},j_{|\texttt{C}|})\right)$ of neuron indices and the activation pattern $\texttt{s}\in\mathcal{S}$ are the second and third parameters. The last parameter $\texttt{u}$ is the additional column that shall be added to $A$. The return value is the updated pseudoinverse $\tilde A^+$ computed as described above. Note that we can use the \textsc{Project} procedure from Section~\ref{sec:project} to compute $w_{\|}$ in order $\mathcal{O}(\sum_{l=1}^Ln_ln_{l-1}+n_0^2)$. The rest of the construction is also covered by this computational complexity order.

Upon this procedure we build the $\textsc{AddAxis}$ procedure below in Algorithm~\ref{alg:addAxis} which, instead of an arbitrary an additional column $\texttt{u}$ uses the oriented normal vector at a specified the neuron position $\texttt{c}$. From the pseudo-code and the above discussion it is clear that the whole $\textsc{AddAxis}$ procedure is of computational complexity order $\mathcal{O}(\sum_{l=1}^Ln_ln_{l-1}+n_0^2)$.
\begin{algorithm}
\caption{Add Axis}
\label{alg:addAxis}
\begin{algorithmic}[1]
	\Procedure{AddAxis}{$\texttt{A}^+$: matrix, $\texttt{s}$: activation pattern , $\texttt{c}$: neuron position}
	\State $\texttt{u}\gets  \Call{OrientedNormalVec}{\texttt{s},\texttt{c}}$
	\State $\texttt{A}^+\gets  \Call{AddPseudorow}{\texttt{A}^+,\texttt{C},\texttt{s},\texttt{u}}$
	\State \Return $\texttt{A}^+$
\EndProcedure
\end{algorithmic}
\end{algorithm}
\subsection{Updating the pseudoinverse for an omitted column}
\label{sec:dropColumn}
Given $n\in \left\{ 1,\ldots,n_0 \right\}$, a column index $i\in\left\{ 1,\ldots,n \right\}$, a matrix $A\in\mathbb{R}^{n_0\times n}$ with $\textnormal{rank}(A)=n$ and its pseudoinverse $A^+=(A^\intercal A)^{-1}A^{\intercal}$ we can omit the $i$-th column of $A$ and compute the corresponding pseudoinverse in computational complexity $\mathcal{O}(nn_0)$. If we denote the rows of $A^+$ by $a^+_1,\ldots,a^+_n$ and the matrix that is obtained by omitting the $i$-th column in $A$ by $\tilde A\in\mathbb{R}^{(n-1)\times n_0}$ then for $k\in\left\{ 1,\ldots,n-1 \right\}$ the $k$-th row of its pseudoinverse $\tilde A^+\in \mathbb{R}^{(n-1) \times n_0}$ is equal to $a^+_{j(k)}-\langle a^+_i,a^{+}_{j(k)}\rangle \langle a^+_i,a^+_i\rangle^{-1} a^+_i$ where $ j(k)=k$ if $k<i$, $j(k)=k+1$ if $k\ge i$. Note that this construction of $\tilde A^+$ is of complexity order $\mathcal{O}(n n_0)$ and does not involve $A$ but only its pseudoinverse $A^+$. Again one easily verifies that indeed $\tilde A^+ A=\textnormal{Id}_{n-1}$ and that the column space of $\tilde A$ and the row space of $\tilde A^+$ are equal. 

In our pseudo-code we will denote this procedure by $\textsc{RemovePseudorow}$ taking as parameters a matrix $\texttt{A}^+$ and an index $\texttt{i}$. It returns the modified pseudoinverse as described above where $\texttt{A}^+$ and $\texttt{i}$ take the roles of the pseudoinverse $A^+$ and $i$ respectively.

\subsection{Flip one activation}
\label{sec:flip}
For an activation pattern $s\in\mathcal{S}$ and a neuron index $(l^*,j^*)\in\mathcal{I}$ we define the function that flips the activation of $s$ at position $(l^*,j^*)$ by
$ \textsc{Flip} :\mathcal{S}\times \mathcal{I}\to\mathcal{S}$ with 
\begin{equation}
	\label{eq:flip}
	\textsc{Flip}(s,(l^*,j^*))_{lj}=
	\begin{cases}
		s_{lj}\quad&\textnormal{if } (l,j)\neq(l^*,j^*)\\
		1-s_{lj}\quad&\textnormal{if } (l,j)=(l^*,j^*)
	\end{cases}.
	\quad 
\end{equation}
In an implementation with constant random access time, this function has constant runtime $\mathcal{O}(1)$ as it accesses and changes only one specific bit. 
\subsection{Updating the feasible axes when changing one activation}
\label{sec:updatingFeasibleAxes}
We now explain the essential ingredient that allows the change of the affine region in a computationally efficient way. It exploits the fact that flipping the activation pattern at one specific position only causes one feasible axis to change as depicted in Figure~\ref{fig:adjacentRegionAxes}.

Assume that $x$ is a regular vertex, i.e. $|C(x)|=n_0$ and let $s\in\mathcal{S}^C(x)$ be a compatible activation pattern. Applying Theorem~\ref{thm:regularity} we identify the subjective critical indices $\tilde C_s(x)=C(x)=\left\{(l_1,j_1),\dots,(l_{n_0},j_{n_0})\right\}$. Let $A_s\in\mathbb{R}^{n_0\times n_0}$ be the matrix with columns $\tilde u_{s,l_1,j_1},\dots,\tilde u_{s,l_{n_0},j_{n_0}}$ and denote its pseudoinverse by $A^+_s=(A_s^{\intercal}A_s)^{-1}A_s^{\intercal}$. Now we change the activation at neuron index $(l_{n_0},j_{n_0})$, i.e. let $\tilde s =\textsc{Flip}(s,(l_{n_0},j_{n_0}))$ as defined in equation~\eqref{eq:flip}. We want to find the pseudoinverse $A^+_{\tilde s}\in\mathbb{R}^{n_0\times n_0}$ of the matrix $A_{\tilde s}\in\mathbb{R}^{n_0\times n_0}$ with columns $\tilde u_{\tilde s,l_1,j_1},\dots,\tilde u_{\tilde s,l_{n_0},j_{n_0}}$. The key insight here is the fact that the first $n_0-1$ rows of $A_{\tilde s}^+$ and $A_s^+$ are equal. Since $A_{\tilde s}^+$ is the pseudoinverse of $A_{\tilde s}$, the unknown axis $a^+_{\tilde s,_{n_0}}\in\mathbb{R}^{n_0}$ in the last row of $A_{\tilde s}^+$ satisfies the following three conditions. Firstly, it is an element of the linear span of $\tilde u_{\tilde ,l_1,j_1s},\dots,\tilde u_{\tilde s,l_{n_0},j_{n_0}}$, secondly it is orthogonal to $\tilde{u}_{\tilde s,l_1,j_1s},\dots,\tilde u_{\tilde s,l_{n_0-1},j_{n_0-1}}$ and finally it satisfies $\langle a^{+}_{\tilde s,n_0},\tilde u_{\tilde s,l_{n_0},j_{n_0}}\rangle=1$. These three conditions uniquely determine $a^+_{\tilde s,_{n_0}}\in\mathbb{R}^{n_0}$ and suggest the following computation. First compute 
\begin{equation}
	\label{eq:additionalAxis}
	w:=\tilde u_{\tilde s,l_{n_0},j_{n_0}}-\sum_{i=1}^{n_0-1}\langle \tilde u_{\tilde s,l_i,n_i},\tilde u_{\tilde s,l_{n_0},j_{n_0}} \rangle a^+_{\tilde s,l_{n_i},j_{n_i}}	\in\mathbb{R}^{n_0},
\end{equation}which is then used to compute $a^{+}_{\tilde s,n_0}={\langle w,u_{\tilde s,l_{n_0},j_{n_0}}\rangle}^{-1}w$. One easily checks that indeed the three conditions are satisfied. Since $\tilde u_{\tilde s,l_{n_0},j_{n_0}}$ and the inner products can be computed in $\mathcal{O}(\sum_{l=1}^{L}n_ln_{l-1})$ as explained in Sections~\ref{sec:computationNormal} and \ref{sec:multipleInner} the overall computational effort in equation~\eqref{eq:additionalAxis} is of order $\mathcal{O}(\sum_{i=1}^Ln_in_{i-1}+n_0^2)$ because we need to subtract $n_0-1$ vectors with $n_0$ entries each. In turn this implies that $a^+_{\tilde s,n_0}$ and hence the matrix $A^+_{\tilde s}$ can be constructed in $\mathcal{O}(\sum_{i=1}^Ln_in_{i-1}+n_0^2)$ if we already know $A^+_s$.
Above we assumed for simplicity that the axis that needs to be recomputed corresponds to the last row of the matrix $A^+_{\tilde s}$. We will denote the above procedure by $\textsc{UpdateAxisNewRegion}$ and allow for an arbitrary row index. Its arguments $(\texttt{A}^+,\texttt{i},\texttt{s},\texttt{c})$ specify the matrix $\texttt{A}^+$ whose $\texttt{i}$-th row needs to be updated when the activation pattern $\texttt{s}$ is flipped at neuron index $\texttt{c}\in\mathcal{I}$, where $\texttt{c}$ corresponds to $(l_{n_0},j_{n_0})$ above. This procedure returns the updated matrix $A^+_{\tilde s}$.

We want to stress again that this algorithmic primitive is the important building block which allows to change the considered affine region. It is therefore the main ingredient that distinguishes our algorithm presented in Section~\ref{sec:description} from linear programming algorithms where the considered affine region is the fixed feasible region. The fact that this change of the considered region is computationally not more expensive than the other algorithmic differences as listed in Table~\ref{tab:complexity}, makes deep ReLU programming interesting for practical problems.
\begin{figure}[htpb]
\centering
\includegraphics[width=0.5\linewidth]{./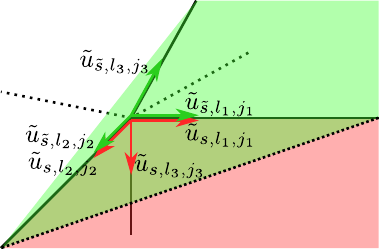}
\caption{
	Illustration of the concept for the algorithmic primitive in Section~\ref{sec:updatingFeasibleAxes} for $n_0=3$. If we know the feasible axes 
	$\tilde u_{s,l_1,j_1}, \tilde u_{s,l_2,j_2},\tilde u_{s,l_3,u_3}$ of the red region with activation pattern $s\in\mathcal{S}$ and want to compute the feasible axes  
	$\tilde u_{\tilde s,l_1,j_1}, \tilde u_{\tilde s,l_2,j_2},\tilde u_{\tilde s,l_3,u_3}$ of the green region 
	with activation pattern $\tilde s$ then we can reuse the $n_0-1=2$ axes $\tilde u_{\tilde s,l_1,j_1}=\tilde u_{s,l_1,j_1}$
	and $\tilde u_{\tilde s,l_2,j_2}=\tilde u_{s,l_2,j_2}$, only one axis $\tilde u _{\tilde s,l_3,j_3}$ needs to be recomputed, see Theorem~\ref{thm:identicalAxes}.
}
\label{fig:adjacentRegionAxes}
\end{figure}
\subsection{Advance maximally without leaving the current affine region}
\label{sec:advanceMax}
Assume we have an argument $x\in\mathbb{R}^{n_0}$ and we want to advance into a direction $v\in\mathbb{R}^{n_0}$ maximally such that the hyperplane pattern does not change. More precisely we want to find 
\begin{equation}
	\label{eq:tstar}
	t^*=\sup\left\{ t\ge0\mid\; H(x+tv)=H(x) \right\}
\end{equation}
 as illustrated in Figure~\ref{fig:11}.\begin{figure}[htpb]
\centering
\includegraphics[width=0.6\linewidth]{./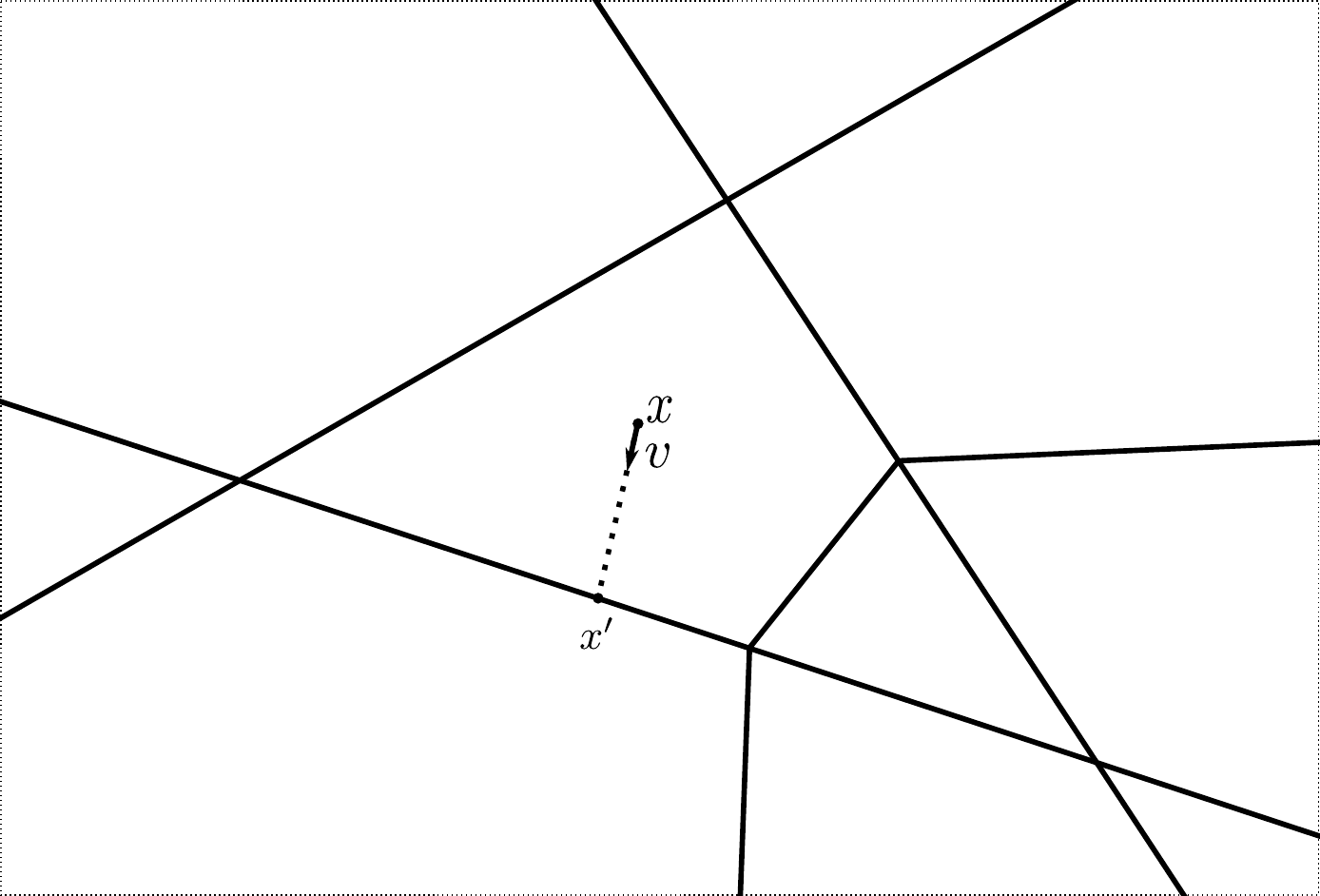}
\caption{Illustration of the setting in ~\ref{sec:advanceMax} for $n_0=2$. Given a starting point $x\in\mathbb{R}^{n_0}$, a compatible activation pattern $s\in\mathcal{S}^C(x)$ and a direction $v\in\mathbb{R}^{n_0}$ we consider the ray $x+tv$, $t\ge 0$ and find $t$ maximally such that the affine region corresponding to $s$ is not left. In the figure, $x'=x+t^*v$.}
\label{fig:11}
\end{figure}
\begin{figure}[htpb]
\centering
\includegraphics[width=\linewidth]{./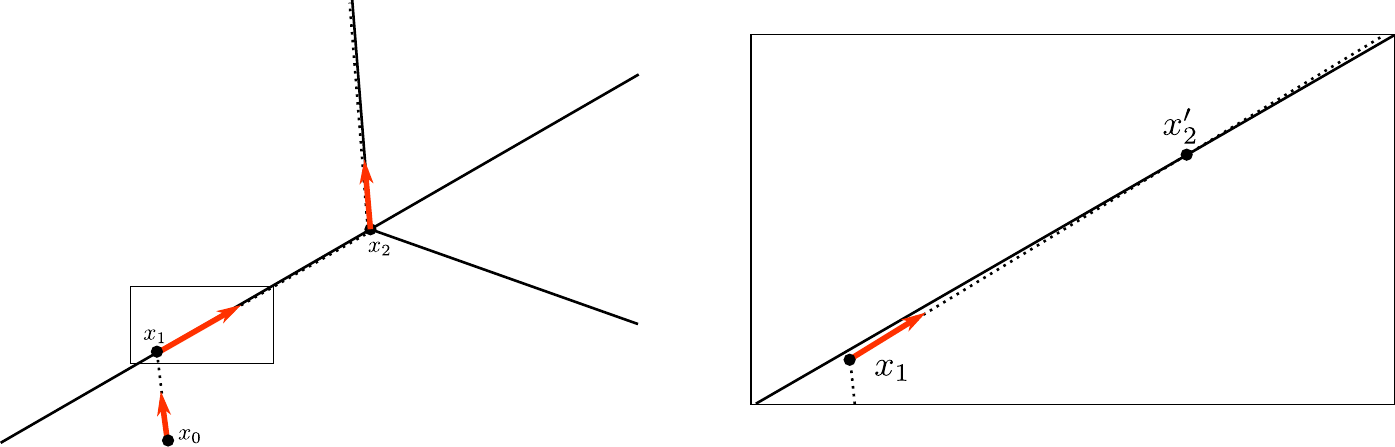}
\caption{
Illustration of a problem caused by numerical uncertainty for a two-dimensional input $n_0=2$. At the position $x_1$ we want to advance in the direction parallel to the boundary of two affine regions until we encounter a change of the affine behaviour at position $x_2$. However, when using finite precision arithmetic, it can happen that $x_1$ itself is not exactly on the boundary and that the direction we take for the next step is not parallel to the boundary line as seen on the magnification of the of the corresponding rectangle section on the left. This causes the dotted line to intersect with the region boundary line such that the next position would be $x_2'$. In order to avoid this artifact, we need to keep book about which region boundaries we are assuming to stay on and ignore activation changes of the corresponding neurons. }
\label{fig:ignore}
\end{figure}
The value $t^*$ above can be computed efficiently as follows. Let $x_v(t)=x+tv$ for $t\in\mathbb{R}$. If we use the notation from equation~\eqref{eq:f} and define $\alpha_1=W_1x+b_1$ and $\beta_1=W_{1}v$ then $g^{(1)}(x_v(t))=\textnormal{ReLU}.(\alpha_1+t\beta_1)$ such that 
\begin{equation}
	\label{eq:t1}
t_1:=\min\left\{ -\frac{(\alpha_1)_j}{(\beta_1)_j}\;\middle|\; j\in\left\{ 1,\dots,n_1 \right\}\land (\beta_1)_j\neq0\land -\frac{(\alpha_1)_j}{(\beta_1)_j}>0 \right\}
\end{equation}
is the infimum $t_1=\inf\left\{ t\ge0\;| \exists\; j\in\left\{ 1,\ldots,n_1 \right\}:\;H(x)_{1,j}\not=H(x_v(t))_{1,j} \right\}$ whenever the set in equation~\eqref{eq:t1} is not empty. Similarly, if we set $s = S(x)$ and recursively define $\alpha_{l+1}=W_{l+1}\textnormal{diag}(s_l)\alpha_l$, $\beta_{l+1}=W_{l+1}\textnormal{diag}(s_l)\beta_l$ for $l\in\left\{ 1,\ldots,L-1 \right\}$, then 
\begin{equation}
	\label{eq:tl}
t_l:=\min
\left\{ 
-\frac{(\alpha_l)_j}{(\beta_l)_j}
\middle\vert\;
j\in\left\{ 1,\dots,n_1 \right\}
\land
(\beta_l)_j\neq0\land
-\frac{(\alpha_l)_j}{(\beta_l)_j}>0
\right\}
\end{equation}
satisfies $t_l =\inf\left\{ t>0\;| \exists\;j\in\left\{ 1,\ldots,n_{l} \right\}:\;H(x)_{l,j}\not=H(x_v(t))_{l,j} \right\}$
for $l\in\left\{ 1,\dots,L \right\}$. In particular it holds that $t^*=\min(t_1,\dots,t_L)$ if at least one of the sets in equations~\eqref{eq:t1} and \eqref{eq:tl} is not empty, otherwise $t^*=\infty$.

This basic idea needs to be extended in some ways to fit our needs. Specifically, we need the following modifications:
\begin{enumerate}
	\item \underline{Explicit specification of activation pattern}: Since we will track the activation pattern in a dedicated state variable in our DRLSimplex algorithm in Section~\ref{sec:description}, instead of using $s=S(x)$ as above, the activation pattern $s$ to be used in the above construction will be explicitly specified as a parameter which is assumed to be compatible with $x$, i.e. to be an element of $\mathcal{S}^C(x)$.
	\item \underline{Information about the corresponding neuron position}: In our implementation below the neuron index at which the activation pattern changes at the point $x_v(t^*)$ is needed. In other words, we also require the argmin neuron position to be returned.
	\item \underline{Controllable insensitivity}: We need to specify some neuron positions where the sign change will be ignored, they shall not be considered in the construction $t^*$. This is necessary to avoid numerical problems illustrated in Figure~\ref{fig:ignore}.
	\item \underline{Mismatch robustness}: Due to numerical uncertainty, it may happen that our specified activation pattern is not compatible with the hyperplane pattern at position $x$, i.e. $s\not\in\mathcal{S}^C(x)$. This means that $x$ is outside the affine region of the objective function $f$ corresponding to $s$. In such a case it makes sense to ignore a sign change or to allow for negative values for $t^*$, see Figure~\ref{fig:mismatchRobustness}. This is achieved in line 10 of Algorithm~\ref{alg:advMaxAdjusted}.
\end{enumerate}
In Algorithm~\ref{alg:advMaxAdjusted} we provide pseudo-code that implements the computation of $t^*$ and these modifications. If the list of neurons $\texttt{C}$ is provided as a boolean array which allows random access, then the check if a specific neuron index $(i,j)\in\mathcal{I}$ is an element of $\texttt{C}$ requires constant time and therefore, the computational complexity of the whole procedure is of order  $\mathcal{O}(\sum_{l=1}^{L}n_ln_{l-1})$. The desired minimum and minimizer are basically computed in a network forward pass.
\begin{figure}[htpb]
\centering
\includegraphics[width=0.6\textwidth]{./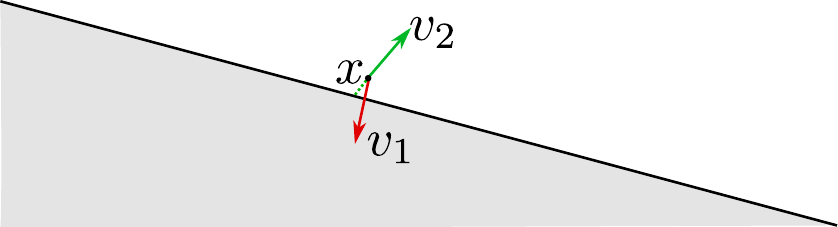}
\caption{Illustration of the mismatch robustness implemented in Algorithm~\ref{alg:advMaxAdjusted}. Assume that the activation pattern $s$ is compatible with arguments $\tilde x$ in the gray area, i.e. $s\in S^C(\tilde x)$ for such $\tilde x$. However the current position $x$ is not in the gray area, $s\not\in S^C(x)$. We distinguish two cases. If the specified direction $v=v_2$ points away from the gray area we allow for negative $t^*$ to get back to the region boundary (black line). If the specified direction $v=v_1$ points towards the gray area, then we ignore this region boundary in the construction of $t^*$. This way we protect against slight disturbances of the specified position $x$ and the assumption $s\in\mathcal{S}^C(x)$.}
\label{fig:mismatchRobustness}
\end{figure}
\begin{algorithm}
\caption{Advance maximally in a specified direction}
\label{alg:advMaxAdjusted}
\begin{algorithmic}[1]
	\Procedure{AdvanceMax}{$\texttt{x}$:position, $\texttt{v}$: direction, $\texttt{s}$:activation pattern, $\texttt{C}$: list of neuron positions to ignore}
	\State $\alpha_1\gets W_{1}\texttt{x}+b_{1}$
	\State $\beta_1\gets W_{1}\texttt{v}$
	\State $c\gets (1,1)$
	\State $t^*\gets\infty$
\For {$l=1,\dots,L$}
\For {$j=1,\dots,n_l$}
\If {$(l,j)\not\in \texttt{C}$ \textbf{and} $\beta_{lj}\neq 0$}
\State $t\gets -\tfrac{\alpha_{lj}}{\beta_{lj}}$
\If{$(\texttt{s}_{l,j}=1\land \beta_j<0)\lor(\texttt{s}_{l,j}=0\land \beta_j>0)$}
\If{$t<t^*$}
\State $t^*\gets t$
\State $\texttt{c}\gets (l,j)$
\EndIf
\EndIf
\EndIf
\EndFor
\State $\alpha_{l+1}\gets W_{l+1}\textnormal{diag}(\texttt{s}_{l})\alpha_l+b_{l+1}$
\State $\beta_{l+1}\gets W_{l+1}\textnormal{diag}(\texttt{s}_{l})\beta_l$
\EndFor
\State\Return $(\texttt{x}+t^*\texttt{v},\texttt{c})$
\EndProcedure
\end{algorithmic}
\end{algorithm}
\section{The Deep ReLU Simplex Algorithm}
\label{sec:drls}
\subsection{Description}
\label{sec:description}
In this section, we describe our \emph{Deep ReLU Simplex (DRLSimplex) algorithm}, an extension of the simplex algorithm from linear programming to deep ReLU programming problems, iterating on vertices of the objective neural network. Where appropriate, we provide pseudo-code. The objective feed-forward neural network $f$ is assumed to be implicitly given. 
We require it to satisfy the following two conditions:
\begin{enumerate}
	\item There need to exist vertices because we aim to first find a vertex and will then iterate on vertices.
	\item The network $f$ needs to be regular in the sense of Section~\ref{sec:regularity} because we rely on the linear independence of the subjective critical normal vectors $\left( \tilde v_{s,l,j} \right)_{(l,j)\in \tilde C_s(x)}$ and on the existence of exactly $2n_0$ region separating axes at every visited vertex $x$ with compatible $s\in\mathcal{S}^C(x)$.
\end{enumerate}
\subsubsection{State Variables}
The state of the algorithm involves the following quantities. The current position in the argument space $\mathbb{R}^{n_0}$ will be saved in the state variable $\texttt{x}\in\mathbb{R}^{n_0}$. Furthermore, the variable $\texttt{s}\in\mathcal{S}$ will always hold an activation pattern that is compatible with $\texttt{x}$, i.e. it shall always hold $\texttt{s}\in\mathcal{S}^C(\texttt{x})$. The variable $\texttt{C}$ is an ordered collection of elements in $\mathcal{I}$. Its purpose is to keep track of the critical indices $\tilde C_{\texttt{s}}$. We assume that $\texttt{C}$ contains at most $n_0$ such indices. Note that this assumption is true by our regularity assumption.

Finally, the algorithm will also keep track of feasible axes in the rows of a matrix $\texttt{A}^+\in\mathbb{R}^{|\texttt{C}|\times n_0}$. This matrix will always be the pseudoinverse of the matrix whose columns are the oriented normal vectors $\tilde u_{\texttt{s},l_1,j_1},\ldots,\tilde u_{\texttt{s},l_{|\texttt{C}|},j_{|\texttt{C}|}}$ where $((l_1,j_1),\ldots,(l_{|\texttt{C}|},j_{|\texttt{C}|}))=\texttt{C}$. The pseudoinverse will not be recomputed in every step but incrementally updated using the algorithmic primitives from Sections~\ref{sec:addColumn}, \ref{sec:dropColumn} and \ref{sec:updatingFeasibleAxes}.

\subsubsection{Initialization}
\label{sec:initialization}
Our DRLSimplex algorithm takes an initial position $\texttt{x}=x_0$ as input. For simplicity, we assume that this initial position is not at the boundary of two affine regions, i.e. $C(x_0)=\left\{  \right\}$. Hence we initialize $\texttt{C}$ to be empty and the matrix $\texttt{A}^+\in\mathbb{R}^{0\times n_0}$ to have no rows. We then compute the activation pattern $\texttt{s}=S(x)$ in a forward pass in $\mathcal{O}(\sum_{l=0}^Ln_ln_{l-1})$. Below we will denote this procedure by \textsc{Initialize(\texttt{x})} and assume it returns the initialized variables $(\texttt{s},\texttt{C},\texttt{A}^+)$.
\subsubsection{Finding a vertex}
After the initialization we need to perform $n_0$ iterations to find a regular vertex of the affine region corresponding to the initial activation pattern $\texttt{s}$. In this phase the activation pattern $\texttt{s}$ and hence affine region is not changed. We first compute the gradient $\nabla_{\texttt{s}}\in\mathbb{R}^n_0$ as explained in Section~\ref{sec:computationGradient} and set the initial direction to $\texttt{v}=-\nabla_{\texttt{s}}$. We then move into the direction opposite to the gradient until we encounter the border of the current affine region. This is done maximally such that the current affine region is not left by using the $\textsc{AdvanceMax}$ subroutine from Section~\ref{sec:advanceMax}. The matrix $\texttt{A}^+$ is updated by using the $\textsc{AddAxis}$ procedure from Section~\ref{sec:addColumn} and the neuron index $\texttt{c}$ returned by $\textsc{AdvanceMax}$ is added to the set $\texttt{C}$ of critical indices in line~\ref{line:addItem} of Algorithm~\ref{alg:FindVertex}. Then the direction for the next step is adjusted by projecting it onto the space orthogonal to span of all normal vectors with indices in $\texttt{C}$. Note that for any $v\in \mathbb{R}^{n_0}$ and subspace $U\subset \mathbb{R}^{n_0}$, the projection $v_U$ of $v$ onto $U$ satisfies $\langle v_U,-v\rangle\le 0$ such that the objective function will not increase in the next iteration. This procedure of moving maximally into a direction and the adjustment of used direction, $\texttt{A}^+$ and $\texttt{C}$ is repeated $n_0$ times until we arrive at a vertex, see Figure~\ref{fig:findVertex} and the pseudo-code of the procedure \textsc{FindVertex} in Algorithm~\ref{alg:FindVertex}.
\begin{figure}[htpb]
\centering
\includegraphics[width=0.5\linewidth]{./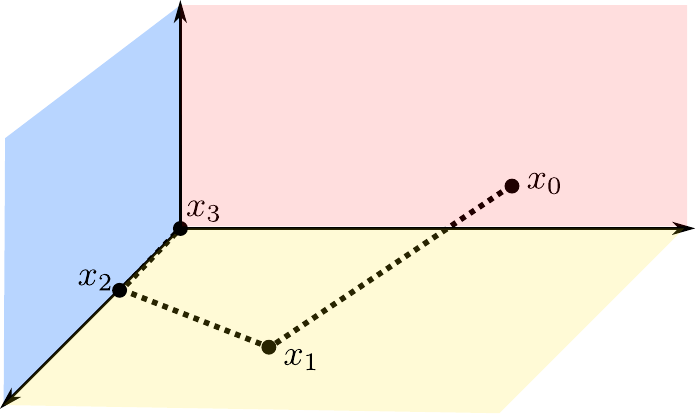}
\caption{
	Illustration of the procedure of finding a vertex $x_3$ for $n_0=3$ starting at position $x_0$. The vector $x_2-x_1$ is orthogonal to the normal vector of the yellow hyperplane, the vector $x_3-x_2$ is orthogonal to normal vectors of both, the yellow and the blue hyperplanes.
}
\label{fig:findVertex}
\end{figure}
\begin{algorithm}
\caption{Find a vertex}
\label{alg:FindVertex}
\begin{algorithmic}[1]
	\Procedure{FindVertex}{$\texttt{x}$: position, $\texttt{s}$: activation pattern, $\texttt{C}$: list of neuron positions, $\texttt{A}^+$: matrix}
	\State $\nabla_{\texttt{s}}\gets \Call{Gradient}{\texttt{s}}$
	\State $\texttt{v}\gets -\nabla_{\texttt{s}}$
\While{$|\texttt{C}|< n_0$}
\State $(\texttt{x},\texttt{c})\gets \Call{AdvanceMax}{\texttt{x},\texttt{v},\texttt{s},\texttt{C}}$
\State $\texttt{A}^+\gets \Call{AddAxis}{\texttt{A}^+,\texttt{s},\texttt{c}}$
\State $\texttt{C}\gets \Call{AddItem}{\texttt{C}^+,\texttt{c}}$\label{line:addItem}
\State $\texttt{v}\gets \texttt{v}-\Call{Project}{\texttt{A}^+,\texttt{C},\texttt{s},\texttt{v}}$
\EndWhile
\State \Return $(\texttt{x},\texttt{s},\texttt{C},\texttt{A}^+)$
\EndProcedure
\end{algorithmic}
\end{algorithm}
\subsubsection{Iterating on vertices and changing the affine region}
\label{sec:drlp}
After a vertex has been found by the \textsc{FindVertex} procedure, the returned position $\texttt{x}$ is a vertex and the rows of the matrix $\texttt{A}^+$ are the feasible axes. Similar to the simplex algorithm in linear programming, we want to choose one of these axes to proceed the iteration on edges. For our purposes we use the procedure $\textsc{ChooseAxis}$ from Algorithm~\ref{alg:chooseAxis} which selects the axis with the strongest correlation with an input direction $\texttt{v}$. 
\begin{algorithm}
\caption{Choose Axis with highest correlation}
\label{alg:chooseAxis}
\begin{algorithmic}[1]
	\Procedure{ChooseAxis}{$\texttt{A}^+$: matrix, $\texttt{v}$: direction}
	\State $\texttt{i}\gets \textnormal{argmax}(\left\{ 1,\ldots,\textnormal{nrows}(\texttt{A}^+) \right\}; i\mapsto \langle \texttt{v},\texttt{A}^+_{i,\cdot}\rangle/\sqrt{\langle \texttt{A}^+_{i,\cdot},\texttt{A}^+_{i,\cdot}\rangle})$\label{line:argmax}
	\State $\texttt{a}^+\gets \texttt{A}^+_{\texttt{i},.}$
	\State \Return $(\texttt{a}^+,\langle \texttt{v},\texttt{a}^+\rangle,\texttt{i})$\label{line:returnAxis}
\EndProcedure
\end{algorithmic}
\end{algorithm}
In line~\ref{line:argmax} the expression ``$\textnormal{nrows}(\texttt{A}^+)$'' returns the number of rows of the matrix $\texttt{A}^+$.
In line~\ref{line:returnAxis} we return the selected axis $\texttt{a}^+$, i.e. row of $\texttt{A}^+$, its inner product $\langle \texttt{v},\texttt{a}^+\rangle$ with the input direction $\texttt{v}$ and the row index $\texttt{i}$ corresponding to $\texttt{a}^+$.  

Now we can compose the subroutines we previously described to form the final DRLSimplex Algorithm~\ref{alg:main}. If we use the negative gradient $-\nabla_{\texttt{s}}$ for the direction $v\in\mathbb{R}^{n_0}$ in Algorithm~\ref{alg:chooseAxis} then the sign of the returned inner product $\langle\texttt{v},\texttt{a}^+\rangle$ tells us if there exists a feasible axis which has a negative inner product with the gradient $\nabla_{\texttt{s}}$ depending on which we are taking the following actions:
\begin{itemize}
	\item If there is such an axis $\texttt{a}^+$, the row with the corresponding index $\texttt{i}$ will be removed from the matrix $\texttt{A}^+$ and we proceed by moving into the direction of this axis using the $\textsc{AdvanceMax}$ procedure. Either there is no neuron that changes its activation in the direction $\texttt{a}^+$. In this case the resulting function can be made arbitrarily small and the algorithm terminates\footnote{For simplicity of the exposition, we did not consider this case where $t^*=\infty$ in Algorithm \ref{alg:advMaxAdjusted}}. Or the procedure returns the new position $\texttt{x}\in\mathbb{R}^{n_0}$ and the index $\texttt{c}\in\mathcal{I}$ of the corresponding neuron that changes its activation at that position. Based on this information we modify the matrix $\texttt{A}^+$ using the $\textsc{AddAxis}$ function from Section~\ref{sec:addColumn} such that its rows are the feasible axes $\tilde{\mathcal{A}}_{\texttt{s}}(\texttt{x})$. The old neuron index corresponding to the hyperplane that was abandoned in the $\textsc{AdvanceMax}$ step is removed from the ordered collection $\texttt{C}$ in line \ref{line:removeAtIndex} and the new index $\texttt{c}$ is appended in line~\ref{line:addItemDRLSimplex}. 
		
		At this stage, the considered affine region of $f$ shall be changed to allow to continue in a different region. To this end the following two state variables need to be adjusted.
		\begin{enumerate}
			\item The state variable $\texttt{s}$ which specifies the activation pattern is flipped at the position $\texttt{c}$ corresponding to the critical neuron. This is achieved by the $\textsc{Flip}$ function from Section~\ref{sec:flip} in line \ref{line:flip} of Algorithm~\ref{alg:main}.
			\item The set of feasible axes need to be updated. Since only one activation changes, we can exploit the fact that only one axis needs to be recomputed as described in Section~\ref{sec:updatingFeasibleAxes} by using the $\textsc{UpdateAxisNewRegion}$ in line~\ref{line:UpdateAxis}.
		\end{enumerate}
		We can now continue with the next iteration as above by computing the gradient $\nabla_\texttt{s}$ for the new $\texttt{s}$. However before the next iteration is made, we assign $1$ to an additional variable $\texttt{nextFlipIndex}$ which is explained below. 
	\item If there is no axis $\texttt{a}^+$ which has negative correlation with the gradient then the current vertex $\texttt{x}$ is a local minimum of the affine region corresponding to the activation pattern $\texttt{s}$. In this case we will change $\texttt{s}$ to a different compatible activation pattern in $\mathcal{S}^C(x)$. After $\texttt{s}$ was changed using the $\textsc{Flip}$ function in line~\ref{line:flip2} we update the axes in $\texttt{A}^+$ for the new activation pattern using $\textsc{UpdateAxisNewRegion}$ in line~\ref{line:UpdateAxis2}. Then we recheck whether there is a feasible axis among the new axes which has negative inner product with the new gradient $\nabla_{\texttt{s}}$. In each retry we flip $\texttt{s}$ at a different neuron. More precisely we subsequently flip at $\texttt{C}[1]$, $\texttt{C}[2]$,\ldots,$\texttt{C}[n_0]$. This is achieved by increasing the index counter $\texttt{nextFlipIndex}$ in line~\ref{line:nextFlipIncrease}. After $n_0$ such flips without finding a new direction to continue with, Proposition~\ref{prop:localMinimum} guarantees that $\texttt{x}$ is a local minimum of $f$ by and the procedure finishes returning $\texttt{x}$ in line~\ref{line:return}.
\end{itemize}
\begin{algorithm}
\caption{DRLSimplex Algorithm}
\label{alg:main}
\begin{algorithmic}[1]
	\Procedure{DRLSimplex-Algorithm}{$\texttt{x}$: starting position}
	\State $(\texttt{s},\texttt{C},\texttt{A}^+)\gets \Call{Initialize}{\texttt{x}}$
\State $(\texttt{x},\texttt{s},\texttt{C},\texttt{A}^+)\gets \Call{FindVertex}{\texttt{x},\texttt{s},\texttt{C},\texttt{A}^+}$
\While{true}
\State $\nabla_{\texttt{s}}\gets \Call{Gradient}{\texttt{s}}$
\State $(\texttt{a}^+,\alpha,\texttt{i})\gets  \Call{ChooseAxis}{\texttt{A}^+, \texttt{C},\texttt{s} ,\nabla_{\texttt{s}}}$
\If{$\alpha<0$}\label{line:alphasmaller}
\State $\texttt{A}^+\gets  \Call{RemovePseudorow}{\texttt{A}^+,\texttt{i}}$
\State $(\texttt{x},\texttt{c})\gets \Call{AdvanceMax}{\texttt{x},\texttt{a}^+,\texttt{s},\texttt{C}}$\label{line:advanceMax}
\State $\texttt{A}^+\gets \Call{AddAxis}{\texttt{A}^+,\texttt{c}}$
\State $\texttt{C}\gets \Call{RemoveAtIndex}{\texttt{C},\texttt{i}}$\label{line:removeAtIndex}
\State $\texttt{C}\gets \Call{AddItem}{\texttt{C},\texttt{c}}$\label{line:addItemDRLSimplex}

\State $\texttt{s}\gets \Call{Flip}{\texttt{s},\texttt{c}}$\label{line:flip}
\State $\texttt{A}^+\gets \Call{UpdateAxisNewRegion}{\texttt{A}^+,\texttt{i},\texttt{s},\texttt{c}}$\label{line:UpdateAxis}
\State $\texttt{nextFlipIndex}\gets 1$
\Else
\If {$\texttt{nextFlipIndex}>n_0$}
\State \Return $\texttt{x}$\label{line:return}
\EndIf
\State $\texttt{c}\gets \texttt{C}[\texttt{nextFlipIndex}]$
\State $\texttt{s}\gets \Call{Flip}{\texttt{s},\texttt{c}}$\label{line:flip2}
\State $\texttt{A}^+\gets \Call{UpdateAxisNewRegion}{\texttt{A}^+,\texttt{nextFlipIndex},\texttt{s},\texttt{c}}$\label{line:UpdateAxis2}
\State $\texttt{nextFlipIndex}\gets \texttt{nextFlipIndex}+1$\label{line:nextFlipIncrease}
\EndIf
\EndWhile
\EndProcedure
\end{algorithmic}
\end{algorithm}
\subsubsection{Computational Complexity per Iteration}
\label{sec:computationalComplexity}
Table~\ref{tab:complexity} summarizes the computational complexity orders for the subroutines used of the previous and this section.
\begin{table}[htpb]
\centering
\caption{Computational complexity of different subroutines used in our solvers}
\label{tab:complexity}
\begin{tabular}{|l|c|c|}
\hline
Algorithm routine & Described in Section & Computational complexity\\
\hline
\textsc{Gradient}& \ref{sec:computationGradient}& $\mathcal{O}(\sum_{l=1}^L n_ln_{l-1})$ \\
\hline
\textsc{Project}& \ref{sec:project} & $\mathcal{O}(\sum_{l=1}^L n_ln_{l-1}+n_0^2)$ \\
\hline
$\textsc{AddAxis}$& \ref{sec:addColumn}& $\mathcal{O}(\sum_{l=1}^L n_ln_{l-1}+n_0^2)$ \\
\hline
$\textsc{DropAxis}$& \ref{sec:dropColumn}& $\mathcal{O}(n_0^2)$ \\
\hline
\textsc{Flip}& \ref{sec:flip}& $\mathcal{O}(1)$ \\
\hline
$\textsc{UpdateAxisNewRegion}$&\ref{sec:updatingFeasibleAxes} & $\mathcal{O}(\sum_{l=1}^L n_ln_{l-1}+n_0^2)$ \\
\hline
$\textsc{AdvanceMax}$& \ref{sec:advanceMax}& $\mathcal{O}(\sum_{l=1}^L n_ln_{l-1})$ \\
\hline
\textsc{Initialize}& \ref{sec:initialization}& $\mathcal{O}(\sum_{l=1}^L n_ln_{l-1})$ \\
\hline
\end{tabular}
\end{table}
Note that all these subroutines can be executed in complexity order $\mathcal{O}(\sum_{l=1}^{L}n_{l}n_{l-1}+n_0^2)$. 
In particular every loop iteration in Algorithms~\ref{alg:FindVertex} and \ref{alg:main} is of the same complexity order. We call these loop iterations \emph{steps} of our algorithm, since either the position of $\texttt{x}$ or the considered activation pattern $\texttt{s}$ changes. Every such step is of order $\mathcal{O}(\sum_{l=1}^{L}n_{l}n_{l-1}+n_0^2)$. 

\subsubsection{Possible modifications}
\label{sec:modifications}
In our pseudo-code we focussed on an easy to understand proof-of-concept implementation and there are plenty possible extensions and improvements.

\paragraph{Position correction}
In our DRLSimplex algorithm, we keep track of the position $\texttt{x}\in\mathbb{R}^{n_0}$ and the activation pattern $\texttt{s}\in\mathcal{S}$ separately, these variables are only synchronized at the beginning in the \textsc{Initialize} function. Due to numerical uncertainty, they can diverge, i.e. $\texttt{s}\not\in \mathcal{S}^C(\texttt{x})$. To avoid such divergence, it may be beneficial to force the position to be exactly in a vertex whenever this is implicitly assumed. More precisely, after the $\textsc{FindVertex}$ procedure in Algorithm \ref{alg:main}, the position $\texttt{x}$ should only iterate on vertices. This means that at the neuron positions $(l,j)\in\texttt{C}\subset \mathcal{I}$ the ReLU arguments need to be zero, i.e. $A(x)_{lj}=0$. If $\texttt{s}\in\mathcal{S}^C(x)$ as assumed in our algorithm, $\tilde A_{\texttt{s}}(x)_{lj}=0$ for $(l,j)\in\texttt{C}$ by Theorem~\ref{thm:argumentRepresentation}. By equation~\eqref{eq:explFormulation} this yields a linear equation system involving inner products with normal vectors corresponding to the neuron positions $\texttt{C}$. Using the inner product computation from Section~\ref{sec:multipleInner}, equation~\eqref{eq:orientedNormal} and the state matrix $\texttt{A}^+$ this equation system can be solved in complexity order $\mathcal{O}(\sum_{l=1}^L n_ln_{l-1}+n_0^2)$. Hence, this position correction does not increase the overall complexity order per step.

\paragraph{Axis correction}
Again due to numerical uncertainty and the fact that the feasible axes in the state matrix $\texttt{A}^+$ are recursively modified, numerical errors can accumulate and it might be necessary to do a fresh computation of $\texttt{A}^+$ as the pseudoinverse of the corresponding oriented normal vectors. However, this step would require a computational complexity of $\mathcal{O}(n_0^3)$.

\paragraph{Other activation function}
Our theoretical and algorithmic considerations of Sections~\ref{sec:analysis} and ~\ref{sec:algPrimitives} can be easily extended to feed-forward neural networks using activation functions of the form
\begin{equation}
	\label{eq:otherActivation}
	\sigma_{a,b}:
	\begin{cases}
		\mathbb{R}&\to\mathbb{R}\\
		x&\mapsto a\textnormal{ReLU}(-x)+ b\textnormal{ReLU}(x)
	\end{cases}\quad \textnormal{ for }a,b\in\mathbb{R}.
\end{equation}To apply our algorithm for networks using this activation function, one can adjust the relevant parts of the algorithm such as the gradient computation. 

Another possibility is the representation of a neuron using this activation functions by a linear combination of two neurons each using a ReLU activation function. These pairs of neurons then induce the same hyperplanes and the overall neural network is not be regular anymore. One therefore has consider only one representative of each such pair in the \textsc{AdvanceMax} procedure and flip their activation synchronously in the \textsc{Flip} procedure. We use this technique in our Julia implementation described in Section~\ref{sec:juliaImplementation}. This way we are able to successfully demonstrate our algorithm in quantile regression, where  final layer's activation function is of the form~\eqref{eq:otherActivation}.

\paragraph{Alternatives to the iteration on vertices}
Our DRLSimplex algorithm first finds a vertex and then proceeds on vertices in subsequent iterations. The theory in Section~\ref{sec:feasibleAxes} on feasible axes for regular vertices can easily be extended for regular points. More precisely, the definition of the feasible axes are also valid for regular points $x\in\mathbb{R}^{n_0}$. Furthermore, from the proof of Lemma~\ref{lem:positiveSpan} it is clear that
\begin{equation}
	\label{eq:feasibleRegionAlternative}
	v\in \tilde D_s(x) \iff \forall (l,j)\in\tilde C_s(x)\; \langle v, \tilde u_{s,l,j}\rangle\ge 0 \quad \textnormal{ for } s\in\mathcal{S}^C(x)
\end{equation} such that $v\in \tilde D_s(x)$ if and only if $v=v_{\perp}+v_{\parallel}$ with $v_{\parallel}=\sum_{(l,j)\in \tilde C_s(x)}\alpha_{lj} \tilde a_{s,x,l,j}$ for non-negative coefficients $\alpha_{lj}$, $(l,j)\in \tilde C_s(x)$ and $\langle v_{\perp},\tilde u_{s,l,j}\rangle =0$ for $(l,j)\in \tilde C_s(x)$. This decomposition allows to find directions $v\in \tilde D_s(x)$ that have negative inner product with the gradient $\nabla_s$ which can then be used instead of $\texttt{a}^+$ in line~\ref{line:advanceMax} of Algorithm~\ref{alg:main}.

We can then drop the first condition on $f$ we required at the beginning of Section~\ref{sec:description}, since we do not need iterate on vertices anymore.
\paragraph{Quadratic Deep ReLU Programming}
For $m\in\mathbb{R}$, a positive definite $m$-ary quadratic form $q:\mathbb{R}^{m}\to\mathbb{R}_+$ is a degree 2 polynomial of the form $q(x)=x^\intercal Ax$ for a positive definite matrix $A\in\mathbb{R}^{m\times m}$. In particular for $x,v\in\mathbb{R}^m$ it holds that
\begin{equation}
\label{eq:quadraticForm}
q(x+tv)=at^2+ bt+c \textnormal{ for }t\in\mathbb{R}
\end{equation}
with real constants $a=(v^\intercal A v)$, $b=(x^\intercal (A+A^\intercal) v)$ and $c=x^{\intercal}A x$. If $v$ is non-zero, then the minimum $t^*_q$ of the parabola~\eqref{eq:quadraticForm} is given by the linear equation 
\begin{equation}
	\label{eq:quadraticFormMinimum}
	2a t^*_q+b=0.
\end{equation}
One can therefore extend our algorithm to be applicable to a combination of definite quadratic forms and ReLU feed-forward neural networks. For example let $f$ be as in equation~\eqref{eq:f}, $q$ a definite quadratic form with $m=n_0$ and define the objective
\begin{equation*}
	\mathfrak{f}:
	\begin{cases}
		\mathbb{R}^{n_0}&\to\mathbb{R}\\
		x&\mapsto f(x)+q(x)
	\end{cases}.
\end{equation*}For a position and a direction $x',v\in\mathbb{R}^{n_0}$ with $\lim_{h\to 0} (\mathfrak{f}(x+hv)-\mathfrak{f}(x))/h<0$ we can use the \textsc{AdvanceMax} procedure to obtain $t^*>0$ maximally such that $t\mapsto f(x+tv)$ is affine on $[0,t^*]$. Then by equation~\eqref{eq:quadraticForm}, $\mathfrak{f}$ is a parabola on $[0,t^*]$ whose extreme value position $\tilde t$ restricted to this interval can be easily computed by the linear equation~\eqref{eq:quadraticFormMinimum} and a comparison against $0$ and $t^*$. By the assumption on $v$, $\tilde t>0$ and $\mathfrak{f}(x)>\mathfrak{f}(x+\tilde t v)$. This shows how to automatically and efficiently select the step size in this case. At position $x\in\mathbb{R}^{n_0}$ with compatible activation pattern $s\in\mathcal{S}^C(x)$ the direction $v$ for the next iteration needs to be selected to have negative inner product with the gradient 
\begin{equation}
	\label{eq:frakGradient}
	\nabla_{\mathfrak{f},s}(x):=\nabla q(x)+\nabla_s
\end{equation}
within $ \tilde D_s(x)$ defined as in~\eqref{eq:feasibleRegionAlternative} for the state variable $s\in\mathcal{S}(x)$. If this is not possible, the compatible activation pattern $s\in\mathcal{S}(x)$ has to be changed to consider a different adjacent affine region of $f$ as described in Section~\ref{sec:drlp}. Note that local minima of $\mathfrak{f}$ are not necessarily on the vertices such that we have to resort to alternative iterations as described above. 

More generally, we can apply these modifications to allow for objective functions $Q$ of the form 
	\begin{equation}
		\label{eq:quadraticObjectiveFunction}
		Q:
		\begin{cases}
			\mathbb{R}^{n_0}&\to\mathbb{R}\\
			x&\mapsto q(x_1,\ldots,x_{n_0},f(x))
		\end{cases}
	\end{equation} for an $m=(n_0+1)$-ary quadratic form $q$. An example of such a \emph{Quadratic Deep ReLU Programming} problem is the LASSO optimization which we will cover in Section~\ref{sec:examples} below.

\subsection{Julia Implementation}
\label{sec:juliaImplementation}
We provide a simple proof-of-concept implementation of our DRLSimplex algorithm in the Julia programming language in the form of a GitHub repository\footnote{\url{https://github.com/hinzstatmathethzch/DRLP}}. It incorporates some of the extensions discussed in Section~\ref{sec:modifications}, specifically the use of two ReLU units to replicate absolute value function and the ability to allow for quadratic forms. However, we rather focussed on simple and instructive code and not on fast execution. In particular, we did not implement the incremental pseudoinverse matrix updating procedure described in Section~\ref{sec:dropColumn} and \ref{sec:dropColumn} because without a proper pivoting strategy as common in linear programming algorithms, this method suffers from numerical instability. Instead, for simplicity we use a full matrix inversion in every step which. This leads to a suboptimal computational complexity per step of order $\mathcal{O}(\sum_{l=1}^Ln_ln_{l-1}+n_{0}^3)$ in our simple implementation. In the next section, we demonstrate several applications which are also available in the code repository.
\subsection{Examples}
\label{sec:examples}
\subsubsection{Local minimum of random weight network}
We first want to apply our algorithm to a neural network with random weight sampled from a continuous distribution. By Theorem~\ref{thm:regularNetwork}, the resulting neural network $f$ will be almost surely regular\footnote{Here we ignore the fact, that finite precision floating point arithmetic only allows discrete distributions.}. As noted in the introduction, the function $f$ may attain infinitely small values and in this case the DRLSimplex algorithm would terminate in the \textsc{AdvanceMax} step. Otherwise it will stop after a finite number of iterations because the function values at different visited vertices will strictly decrease by line~\ref{line:alphasmaller} of Algorithm~\ref{alg:main}. In Figures~\ref{fig:localMinDim1} and \ref{fig:localMin} we depict the objective function centered around the local minimum found by our algorithm for input dimensions $n_0=1$ and $n_0=2$.
\begin{figure}[htpb]
\centering
\includegraphics[width=0.6\linewidth]{./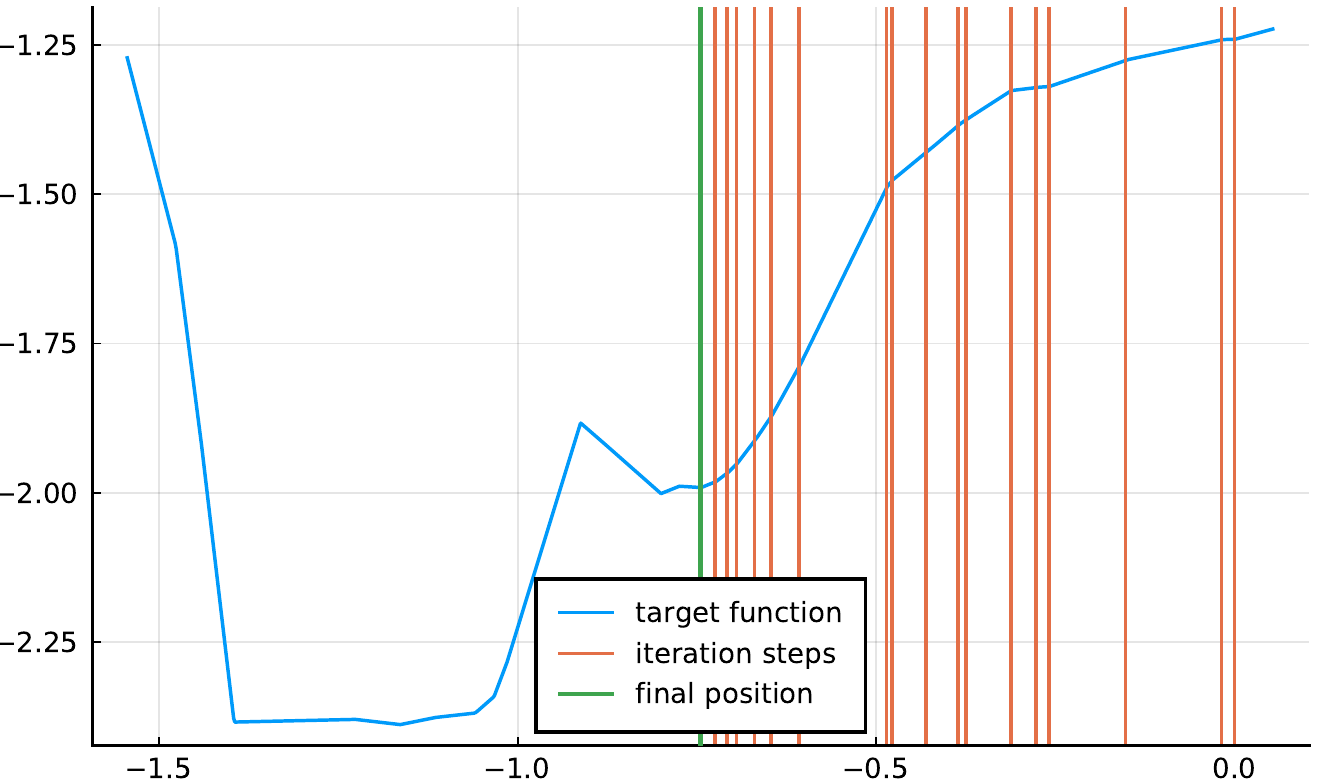}
\caption{
	After 18 iterations, our DRLSimplex algorithm successfully finds a local minimum of a feed-forward ReLU neural network with uniform random parameters on $[-1,1]$ and topology $(n_0,\ldots,n_{L+1})=(1,50,10,10,10,10,10,1)$ starting at position $x_0=0$. The red vertical lines represent the iteration steps. The green vertical line indicates the final position at termination.
}
\label{fig:localMinDim1}
\end{figure}
\begin{figure}[htpb]
\centering
\includegraphics[width=0.6\linewidth]{./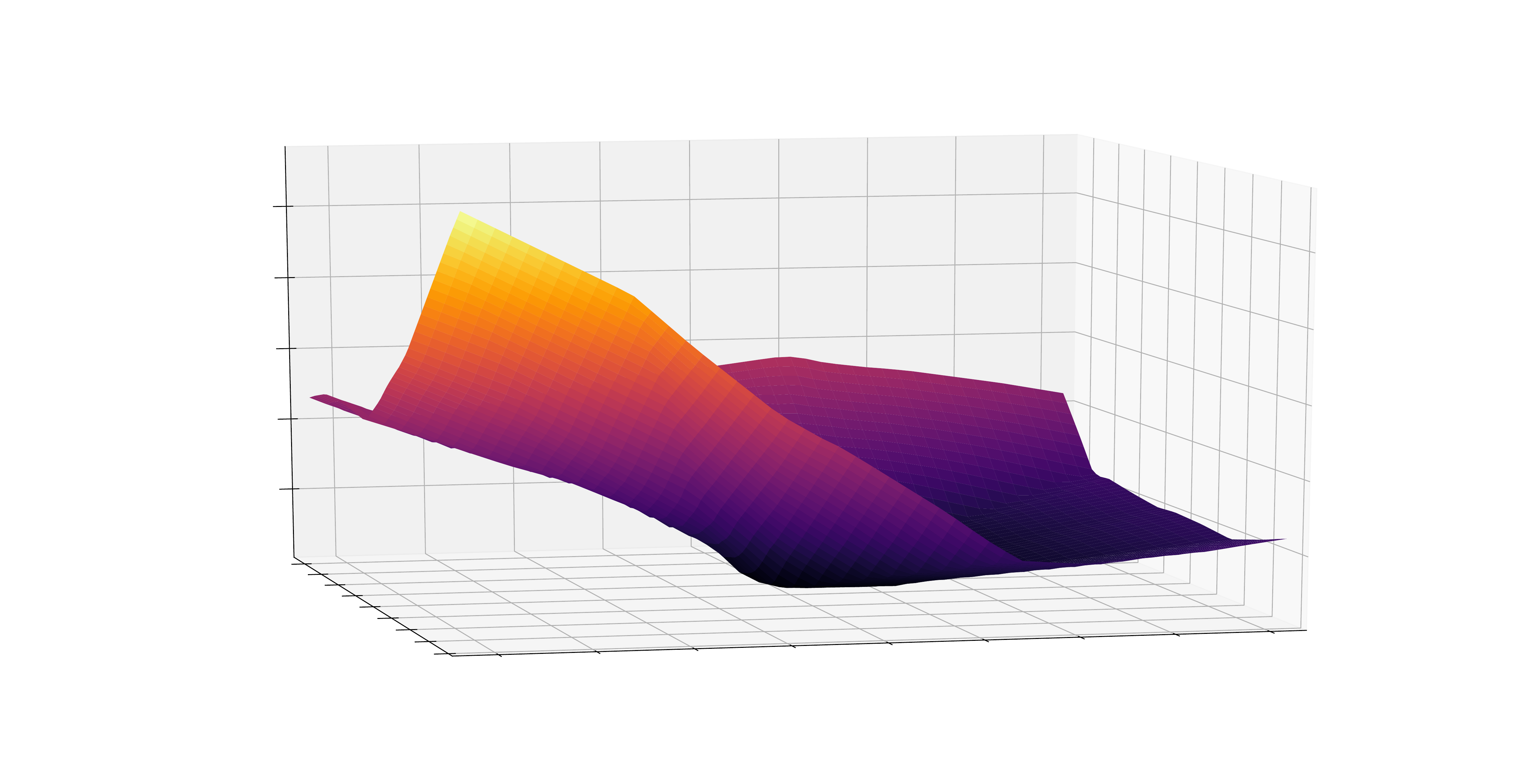}
\caption{
	Surface plot of the objective function around a local minimum found with our DRLSimplex algorithm applied to a random feed-forward ReLU neural network with topology $(n_0,\ldots,n_{L+1})=(2,10,10,10,10,10,1)$.
}
\label{fig:localMin}
\end{figure}

While this demonstrates our algorithm in practice, it is not immediately clear why it is useful to minimize a real-valued feed-forward neural network in its argument. In the next sections below we will transform loss functions of common machine-learning problems into such networks and thus, our algorithm allows provides a training method in these cases. Existing algorithms can then be seen as special cases of our DRLSimplex algorithm. 

\subsubsection{LASSO penalized quantile regression}
\label{sec:quantileReg}
For $p,N\in\mathbb{N}$, consider predictor and response samples $x_1,\dots,x_N\in\mathbb{N}^{p}$, $y_1,\dots,y_N\in\mathbb{R}$. Quantile regression as introduced in \cite{koenker1978regression} aims to minimize $\theta\mapsto\sum_{i=1}^N \rho_\alpha(y-\theta_0-\sum_{j=1}^p\theta_j x_{ij})$ over $\theta\in\mathbb{R}^{p+1}$ with $\rho_\alpha:x\mapsto \alpha\max(x,0)+(1-\alpha)\max(-x,0)$, $\alpha\in[0,1]$. For more generality, we also incorporate a LASSO penalty term weighted by $\lambda\ge 0$ in our loss function
\begin{equation}
	\label{eq:quantreg}
	\mathcal{L}:
\begin{cases}
\mathbb{R}^{p+1}&\to\mathbb{R}_{\ge 0}\\
\theta&\mapsto\sum_{i=1}^{N}\rho_\alpha( y_i-\sum_{j=1}^{p}\theta_j x_{ij}-\theta_0) +\lambda\sum_{j=1}^p\vert\theta_j\vert
\end{cases}.
\end{equation}
For $\lambda=0$ and $\alpha=0.5$ we obtain the loss function of \emph{Least Absolute Deviation} (LAD) regression \cite{Bloomfield1983}. Such optimization problems are usually solved using specialized linear programming algorithms such as the Barrodale-Roberts algorithm~\cite{barrodale1973improved}. 

Note that $\mathcal{L}(x)= W_{2}\textnormal{ReLU.}(W_1 x+ b_1)$ with matrices 
\begin{equation*}
{\tiny
W_1=
\begin{pmatrix}
-\alpha\mathds{X}\\
(1-\alpha)\mathds{X}\\
\lambda\textnormal{Id}_p\\
-\lambda\textnormal{Id}_p
\end{pmatrix}_{2(N+p)\times p+1},  
\mathds{X}=
\begin{pmatrix}
1& x_1^\intercal\\
\vdots& \vdots\\
1& x_N^\intercal
\end{pmatrix}_{N\times p+1}, 
b_1=
\begin{pmatrix}
\alpha Y\\
-(1-\alpha)Y\\
0_{2p\times 1}
\end{pmatrix}_{2(N+p)\times 1},
Y=
\begin{pmatrix}
y_1\\
\vdots\\
y_N
\end{pmatrix}_{N\times 1}
}
\end{equation*}and $W_2=(1,\ldots,1)\in\mathbb{R}^{1\times 2 (N+p)}$ is a one hidden layer feed-forward neural network with layer widths $(n_0,n_1,n_2)=(p+1,2(N+p),1)$. Each absolute value and $\rho_\alpha$ function is replicated by a linear combination of two ReLU neurons. They are inducing the same hyperplanes such that the resulting network is not regular. However our algorithm can be applied if we only consider the activation of one representative for each such pair as described in Section~\ref{sec:modifications}. 

We have incorporated this extension in our Julia implementation and for a sample data set we have tested our algorithm starting from a random position and compared its final position to the output of the \texttt{rq.fit.lasso} R function. Indeed our algorithm terminates with the correct minimizer. We have included this example in our code repository, see Section~\ref{sec:juliaImplementation}. While our implementation will not provide an advantage over the existing specialized optimization methods, this demonstrates the universality of our algorithm. In fact, the Barrodale-Roberts algorithm can be seen as a special case of our DRLSimplex-Algorithm as we will describe next. 
\paragraph{Barrodale-Roberts (BR) as a special case of DRLSimplex} Similarly to our DRLSimplex algorithm, the BR algorithm iterates on vertices and uses the same stopping criterion. The iteration terminates when there is no region separating axis that has negative inner product with the gradient corresponding the respective adjacent affine regions. However, as we have shown above the loss function $\mathcal{L}$ can be represented as a neural network with only one hidden layer such that there is no composition but only one layer involving the ReLU activation function. In consequence, the partitioning of the input space into regions of affine behaviour is based on $N+p$ hyperplanes and this induces a special vertex structure. More precisely, the region separating axes from equation~\eqref{eq:feasibleRegionseparating} at a vertex $x\in\mathbb{R}^{n_0}$ satisfy $a_{x,1,l,j}=-\alpha a_{x,0,l,j}$ for compatible $(l,j)\in\mathcal{I}$ and $\alpha>0$. Exploiting this fact, the BR algorithm follows ``lines of vertices'' given by the intersection of $n_0-1$ hyperplanes until a vertex is found such that the the objective function cannot further be decreased in the same direction. In particular, the updating process of new feasible axes as described Sections~\ref{sec:addColumn} and \ref{sec:updatingFeasibleAxes} is not necessary at every visited vertex in the DRLSimplex algorithm and the BR algorithm intelligently saves computation time by continuing with the same previous direction until it encounters a vertex where the this direction cannot be used anymore to further decrease the objective function. Only at such vertices, the feasible axes are recomputed to find a direction to continue with or to stop the iteration.

Therefore in its essential structure, the BR algorithm can be seen as a special case of the DRLSimplex algorithm for one-hidden-layer ReLU networks which saves computation time by exploiting the fact that there are no composition of nonlinearities such that the vertex structure is induced by hyperplane intersections. In contrast, the DRLSimplex algorithm can be used for ReLU networks with more layers and therefore has to compute the feasible axes at every visited vertex.

\subsubsection{Censored Least Absolute Deviation}
\label{sec:censoredLAD}
The \emph{censored least absolute deviation} (CLAD) estimator was introduced 1984 by J. Powell in \cite{powell1984least}. For $p,N\in\mathbb{N}$, and predictor and response samples $x_1,\dots,x_N\in\mathbb{R}^{p}$, $y_1,\dots,y_N\in\mathbb{R}$ it aims to minimize the loss
\begin{equation}
	\label{eq:clad}
	\mathcal{L}:
	\begin{cases}
		\mathbb{R}^{p}&\to\mathbb{R}\\
		\theta&\mapsto \sum_{i=1}^N|y_i-\max(\langle \theta, x_i\rangle,0)|
	\end{cases}.
\end{equation}
Quantile regression from Section~\ref{sec:quantileReg} involved no composition of nonlinearities and such that the structure can be easily understood and efficient linear programming based solver algorithms similar to the Barrodale-Roberts algorithm can be formulated. In contrast for the above loss function, a rewrite as a neural network requires two hidden layers since there is a composition of the absolute value function and the nonlinear mapping induced by the maximum. Furthermore the loss function $\mathcal{L}$ is non-convex such that practical solvers for this problem are not apparent.

Powell originally proposed direct programming solvers which only consider function evaluations and ignore the underlying structure of the loss function. 
Later, Buchinsky \cite{buchinsky} proposed his Iterative Linear Programming Algorithm (ILPA) which iteratively performs firstly a standard quantile regression for the previously selected uncensored observations and then checks for the newly estimated parameter $\theta$ which observations are actually uncensored and will be considered in the next iteration, starting with all observations uncensored in the first iteration. Unfortunately, this procedure does not always converge and does not necessarily provide a local minimum as shown by Fitzenberger \cite{Fitzenberger}. In this work, he also presents his BRCENS algorithm, an adaptation of the Barrodale-Roberts algorithm to CLAD regression iterating on the vertices induced by the hyperplanes
corresponding to the absolute value and maximum functions in equation \eqref{eq:clad}. 

\paragraph{BRCENS as a special case of DRLSimplex}
We can obtain BRCENS as a special case of our DRLSimplex algorithm by rewriting the loss function $\mathcal{L}$ as a neural network. For the data matrix $\mathds{X}:=\left( x_1,\dots,x_N \right)^{\intercal}\in\mathbb{R}^{N\times p}$ and the response vector $Y:=(y_1,\dots,y_N)^\intercal\in\mathbb{R}^{N}$ it holds that
\begin{eqnarray*}
	\mathcal{L}(\theta)&=& \sum_{i=1}^{N}\vert y-\max\left( \langle x_i,\theta\rangle, 0 \right)\vert \\
	&=& \sum_{i=1}^{N}\textnormal{ReLU}\left( y_i-\textnormal{ReLU}(\langle x_i,\theta\rangle \right) + \sum_{i=1}^{N}\textnormal{ReLU}\left( -y_i+\textnormal{ReLU}(\langle x_i,\theta\rangle \right)\\
&=& \left( 1,\dots,1 \right)_{1\times N}\textnormal{ReLU.}\left( Y-\textnormal{ReLU.}(\mathds{X}\theta ) \right) + \left( 1,\dots,1 \right)_{1\times N}\textnormal{ReLU.}\left( -Y+\textnormal{ReLU.}(\mathds{X}\theta ) \right)\\
&=& \left( 1,\dots,1 \right)_{1\times 2N}\textnormal{ReLU.}\left( W_2 \textnormal{ReLU.}\left( W_1 \theta\right)+b_2  \right),
\end{eqnarray*}
where
\begin{equation*}
{\scriptsize
W_2=
\begin{pmatrix}
\mathds{1}_N \\
-\mathds{1}_{N}\\
\end{pmatrix}_{2N\times N}, 
W_1= \mathds{X}
,\;
b_2=
\begin{pmatrix}
-Y\\
Y\\
\end{pmatrix}_{2N\times 1}
}.
\end{equation*}
In particular, $\mathcal{L}$ can be written as a $L=2$ hidden layer neural network in the form \eqref{eq:f} with $(n_0,n_1,n_2,n_3)=(p,N,2N,1)$ such that we can apply our DRLSimplex algorithm. 

Note that for $\theta\in\mathbb{R}^{n_0}$, the function $x\mapsto \langle \theta,x\rangle$ is a feed-forward neural network of the form~\eqref{eq:f} with $L=1$, $W_1=\theta^\intercal$, $W_2=1$, $b_1=0$ and $b_2=0$. In this sense the application of the DRLSimplex algorithm to the above problem can be seen as training the first layer parameters of a simple neural network with absolute deviation loss. We will leverage this idea to train the first layer of more general, possibly deep ReLU networks in the next section with our DRLSimplex algorithm.
\subsubsection{First layer L1 loss-optimization for feed-forward neural networks}
In this section, we want to use our DRLSimplex algorithm to train the first layer parameters of a feed-forward neural network $f\in\mathcal{F}$ as in equation~\eqref{eq:f}. To express the variability of the first layer parameters $\theta=(W_\theta,b_\theta)\in\mathbb{R}^{n_1\times n_0}\times \mathbb{R}^{n_0}=:\Theta$ we define 
\begin{equation*}
	f_{\theta}(x)=W_{L+1}g^{(L)}\circ\cdots\circ g^{(2)}\left(\textnormal{ReLU}.\left( W_\theta x+b_\theta\right)  \right)+b_{L+1}
\end{equation*}for $x\in\mathbb{R}^{n_0}$. Note that compared to $f$, the only difference is that the first layer weight and bias parameters are replaced by those specified by $\theta$. In particular $f_\theta=f$ for $\theta=(W_1,b_1)$. Using this notation, the $L1$ loss for fixed training samples $(x_1,y_1),\dots,(x_N,y_N)\in\mathbb{R}^{n_0}\times\mathbb{R}$, $N\in\mathbb{N}$ is given by
\begin{equation*}
	\mathcal{L}:
\begin{cases}
\Theta&\to\mathbb{R}\\
\theta&\mapsto \sum_{i=1}^{N}\vert f_{\theta}(x_i)-y_i\vert
\end{cases}.
\end{equation*}
We now want to construct a feed-forward neural network $f_{\mathcal{L}}:\mathbb{R}^{n_{1}(n_0+1)}\to\mathbb{R}$ that computes this loss $f_L(\tilde\theta)=\mathcal{L}(\theta)$ for $\theta\in\Theta$. Here we denote by $\tilde\theta \in\mathbb{R}^{n_1(n_0+1)}$ the rearrangement of the entries of $\theta=(W_\theta,b_{\theta})\in\mathbb{R}^{n_1\times n_0}\times \mathbb{R}^{n_1}$ into a vector according to the rule 
\begin{equation*}
	\tilde \theta_{k} =
	\begin{cases}
		(W_{\theta})_{ij}\quad &\textnormal{ if } k\le n_1n_0\textnormal{ with }(i,j)\in\left\{ 1,\ldots,n_1 \right\}\times \left\{ 1,\ldots,n_0 \right\} \textnormal{ such that } k=in_0+j\\
		b_{k-n_1n_0}\quad& \textnormal{ if } k>n_1n_0
	\end{cases}
\end{equation*}for indices $k\in\left\{ 1,\ldots,n_1(n_0+1) \right\}$. This means that we first fill the parameters of the weight matrix $W_{\theta}$ row-wise into the rearrangement $\tilde \theta$, followed by the entries of the bias vector $b_\theta$. We also need to introduce the corresponding $(Nn_1)\times (n_0+1)$ data matrix
$M_{X}=(M(x_1)^\intercal\cdots M(x_N)^\intercal)^\intercal$, where
\begin{equation*}
	{
	\tiny
M(x_i)=
\begin{pmatrix}
x_i^\intercal&0&\dots&0&1&0&\dots&0\\
0&\ddots&\ddots&\vdots&0&\ddots&\ddots&\vdots\\
\vdots&\ddots&\ddots&0&\vdots&\ddots&\ddots&0\\
0&\dots&0&x_i^\intercal&0&\dots&0&1\\
\end{pmatrix}}\quad \textnormal{ for }i\in\left\{ 1,\cdots,N \right\}.
\end{equation*}
Note that for $i\in\left\{ 1,\ldots,N \right\}$, $M(x_i)\tilde \theta=W_\theta x_i+b_\theta$ expresses the first layer affine transformation of $f_\theta$ applied to the predictor $x_i$ such that $M_X\tilde\theta$ is a long vector containing groups of the entries $M(x_1)\tilde\theta,\ldots,M(x_N)\tilde\theta$. To each of these groups, also the remaining transformations of $f_\theta$ need to be applied. In order to express this notationally conveniently, for $a,b,l\in\mathbb{N}$ we define 
the $l$-fold diagonal replication $D_l(W)$ of $W\in\mathbb{R}^{a\times b}$ and $l$-fold stacked vector $T_l(b)$ of $b\in\mathbb{R}^{a}$ by
\begin{equation*}
	\tiny
D_{l}(W)=
\begin{pmatrix}
W&0&\dots&0\\
0&\ddots&\ddots&\vdots\\
\vdots&\ddots&\ddots&0\\
0&\dots&0&W
\end{pmatrix}
\in\mathbb{R}^{la\times lb}
,\quad
T_{l}(b)=
\begin{pmatrix}
b\\
\vdots\\
b\\
\end{pmatrix}
\in\mathbb{R}^{la}.
\end{equation*}

Using the notation 
$g_{N}^{(i)}: \mathbb{R}^{Nn_{i-1}}\to\mathbb{R}^{Nn_{i}},\; x\mapsto\textnormal{ReLU.}\left( D_N(W_i)x+T_N(b_i) \right)$
for $i\in\left\{ 2,\dots,L \right\}$ and $Y=(y_1,\dots,y_N)\in\mathbb{R}^{N}$ it follows that
\begin{align*}
	\scriptsize\tiny
	&\mathcal{L}(\theta)=\sum_{i=1}^{N}\vert f_{\theta}(x_i)-y_i\vert = \sum_{i=1}^{N}\vert W_{L+1}g_N^{(L)}\circ\dots\circ g_{N}^{(2)}(  \textnormal{ReLU.}(W_\theta x_i+ b_\theta))+b_{L+1}-y_i\vert\\
=& \left( 1,\dots,1 \right)_{1\times N} \textnormal{ReLU.}\Big( -Y+T_N\left( b_{L+1}\right)+ 
D_N(W_{L+1})g_N^{(L)}\circ \dots\circ g_{N}^{(2)} \left( \textnormal{ReLU.}(M_X\tilde \theta) \right)\Big)\\
&+ \left( 1,\dots,1 \right)_{1\times N} \textnormal{ReLU.}\Big( Y-T_N\left( b_{L+1} \right)- D_N(W_{L+1})g_N^{(L)}\circ 
	\dots\circ g_{N}^{(2)}
\left( \textnormal{ReLU.}(M_X\tilde \theta )\right)\Big)\\
=& \left( 1,\dots,1 \right)_{1\times 2N} \textnormal{ReLU.}\Big( 
\begin{pmatrix}
-Y+T_N\left( b_{L+1}\right)\\
Y-T_N\left( b_{L+1}\right)
\end{pmatrix}+ 
{\scriptsize \begin{pmatrix}
D_N(W_{L+1})\\
-D_N(W_{L+1})
\end{pmatrix}}
g_N^{(L)}\circ 
\dots\circ g_{N}^{(2)}
\left( \textnormal{ReLU.}(M_X\tilde \theta) \right)\Big)
\qedhere
\end{align*}
It follows that a network $f_{\mathcal{L}}$ that computes $\mathcal{L}$ can be realized using $L+1$ hidden layers of widths $(n_0,\cdots,n_{L+1})=(n_1(n_0+1),Nn_1,\cdots,Nn_{L},2N)$. The predictor and response samples affect the first layer matrix and the $L+1$-th layer bias vector respectively.

Note that for $Nn_1>n_1(n_0+1)$ the origin $0\in\mathbb{R}^{n_1(n_0+1)}$ of the above constructed network $f_{\mathcal{L}}$ will generally be a vertex that is \emph{not} regular because the first layer bias vector is $0$ such that every of the $Nn_1$ neurons in the first layer may change their activation locally around the origin. Thus, our algorithm will only be applicable as long as the iterated positions are regular vertices.

Furthermore, note that similar to the above construction, also the L1 loss as a function of the weight and bias parameters of any other layer with index $\ell\in\left\{ 1,\ldots,L \right\}$ can be written as a feed forward neural for fixed parameters in the remaining layers. In this case, the transformed predictors $g^{(\ell-1)}\circ\cdots\circ g^{(1)}(x_i)$, $i\in\left\{ 1,\ldots,N \right\}$ take the role of the predictors above and induce the matrix $M_X$. However, the $\textnormal{ReLU}$ activation function applied during this transformation often causes a linear dependence structure among the transformed predictors such that whenever the ReLU arguments are zero for some neurons, automatically those of other neurons are also zero, for example in the case multiple transformed predictors being completely zero. This causes corresponding vertices to be non-regular, thus rendering our algorithm unusable. Due to this reason, we restricted to first layer's parameter training here.
\begin{example}
	\label{ex:training}
	For the topology $L=3$, $(n_0,n_1,n_2,n_3,n_4)=(4,5,4,2,1)$ we sample the network weights and bias vectors from a uniform distribution and apply our DRLSimplex algorithm to the function $\mathcal{L}$ above rewritten as a neural network for 500 training samples also sampled from a uniform distribution. Our algorithm produces a sequence $n_1(n_0+1)=25$-dimensional estimates for the first layer network parameters with strictly decreasing loss depicted in Figure~\ref{fig:nntraining}, a property that most other training methods do not guarantee. After a finite number of steps convergence is reached. 
	
	Interestingly, the decay in training loss is approximately exponential. Note that since our algorithm iterates on vertices, this might empirically provide information about the density of vertices around local minima of the loss $\mathcal{L}$. 
	An interesting question to ask is whether this decay graph provides information about the quality of the converged local minimum: Assuming an exponential decay, maybe a overly fast convergence might indicate premature trapping in a suboptimal undesirable local minimum. A further question for future analysis is whether such vertex density information can be used to improve the step size control in standard gradient descent in the sense that a high density requires small step sizes and vice versa.
\begin{figure}[htpb]
\centering
\includegraphics[width=0.6\linewidth]{./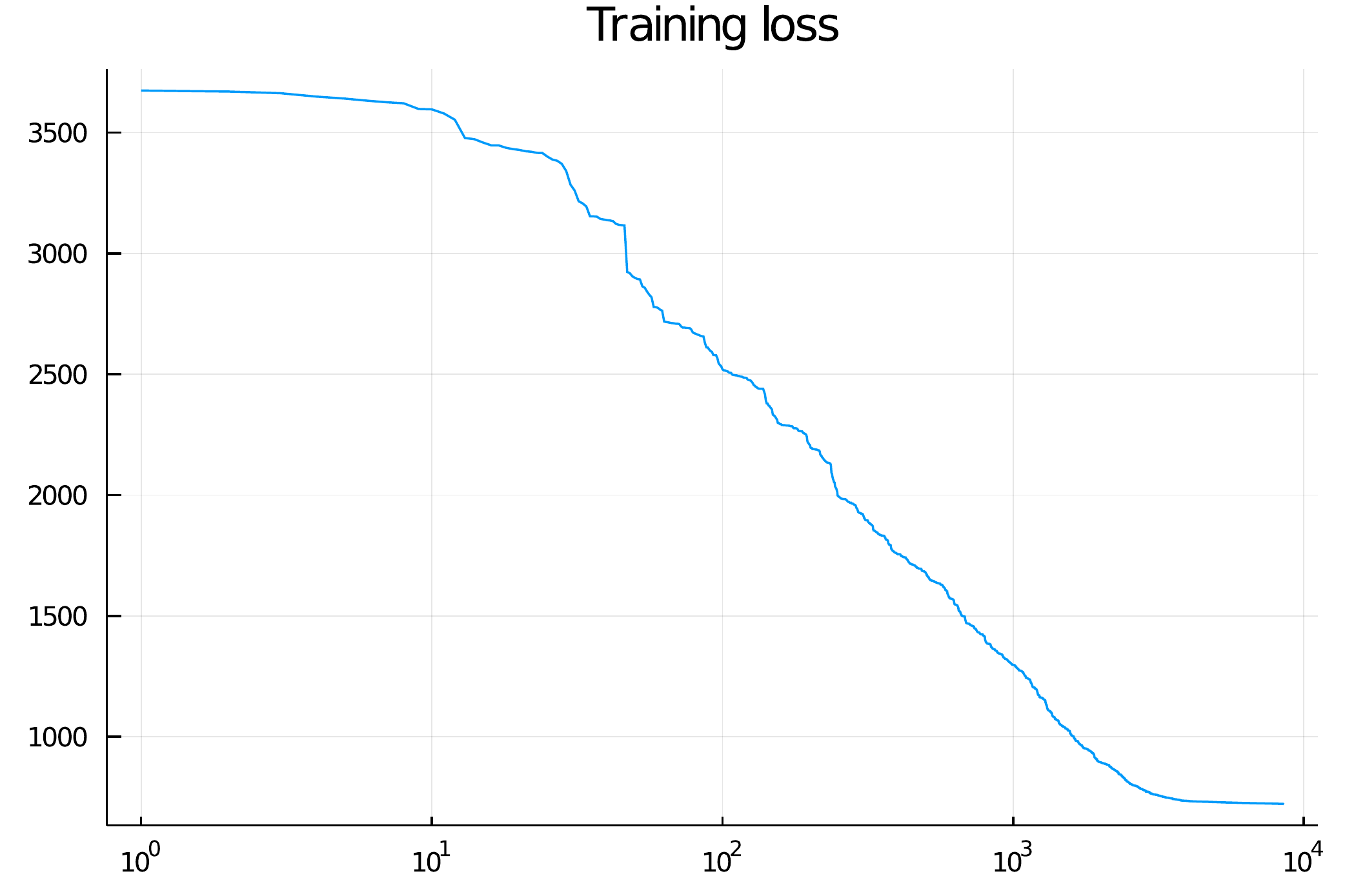}
\caption{
	Training loss for the setup explained in Example~\ref{ex:training} across different epochs.
}
\label{fig:nntraining}
\end{figure}
\end{example}

The widths of the hidden layers of $f_{\mathcal{L}}$ scale with $N$ such that one step of our algorithm is of order $\mathcal{O}(N^2)$ for increasing $N$, see Section~\ref{sec:computationalComplexity}. This undesired blowup is caused by the large replication matrices and stacked vectors used in the network $f_{\mathcal{L}}$ above which compute the parallel evaluation of the original network $f$ on the predictor samples. Instead, one could consider a modification of our algorithm adapted to such multiple evaluations of the same function $f$ on different samples. This way, the quadratic dependence in $N$ can be improved to a linear dependence. We will pursue this idea in future work.

\subsubsection{LASSO optimization}
\label{sec:lassoOptimization}
	For a data matrix $\mathbb{X}\in\mathbb{R}^{N\times p}$ and response vector $Y\in\mathbb{R}^{N}$ as in Section~\ref{sec:censoredLAD} and a penalty parameter $\lambda>0$, the classical LASSO~\cite{tibshirani1996regression} aims to optimize the loss function
\begin{equation*}
\mathcal{L}(\theta):
\begin{cases}
	\mathbb{R}^{p}&\to\mathbb{R}\\
\theta&\mapsto \|Y-X\theta\|_2+\lambda\sum_{i=1}^p|\theta_i|
\end{cases}.
\end{equation*}
where $\lambda>0$ is a hyper parameter.
In Section~\ref{sec:modifications} we explained how our algorithm can be modified such that mixtures \eqref{eq:quadraticObjectiveFunction} of quadratic forms and feed-forward neural networks can be optimized. 
This modification then no longer iterates on vertices of the neural network but is rather an specialization of an active-set method from quadratic programming with the same ideas as our DRLSimplex algorithm. For brevity and clarity of our exposition we do not explicitly provide pseudo-code for this modification. Instead, we provide a Julia implementation in our code repository, see Section~\ref{sec:juliaImplementation}. 

We do not aim to compete with existing implementations but rather want to demonstrate how our methodology and theoretical results may be used for the implementation of new algorithms for piece-wise quadratic optimization of objective functions whose structure is of the form~\eqref{eq:quadraticObjectiveFunction}.
For a random sample data set indeed, our implementation based on the above discussed modification of the DRLSimplex algorithm converges to the same solution as the corresponding R LASSO estimator of the \texttt{glmnet} package.

\subsection{Discussion}
\label{sec:discussion}
In this section we want to first discuss our DRLSimplex algorithm in the context of linear programming and gradient descent. Then we point out the benefits that arise from our contribution.
\subsubsection{Relation to the standard simplex algorithm}
As noted in Section~\ref{sec:drlpproblem}, deep ReLU programming can be seen as an extension of linear programming because the objective function $f$ is piece-wise affine on convex domains
\begin{equation}
	R_s:=\big\{x\in\mathbb{R}^{n_0}\mid\; s\in \mathcal{S}^C(x)  \big\}, \quad s\in\mathcal{S}
\end{equation}
each of which has the form of a feasible region in linear programming. In this sense linear programming focusses only on an affine objective on the feasible region while deep ReLU linear programming considers multiple regions $R_s$ with their own affine objective function $f|_{R_s}$ for $s\in\mathcal{S}$. At the boundaries of these regions their objective functions are equal by the continuity of the feed-forward ReLU neural network $f$. This means that deep ReLU programming can be seen as a patchwork of many linear programming problems which satisfy a continuity condition and the algorithmic difference of our DRLSimplex algorithm compared to the standard simplex algorithm is the ability to change the considered region $R_s$ by switching $s\in\mathcal{S}$ in a computationally efficient way that allows to reuse previously computed axes.

Section~\ref{sec:drlpproblem} also showed that every linear program can be rewritten as a deep ReLU program by constructing a ReLU neural network that has the solution of the linear program as its global minimum. However, this is only a theoretical statement and such a rewrite is of no practical use since our algorithm does not provide any computational runtime advantage over existing solvers for linear programs. Instead, the advantage of our DRLSimplex algorithm is the ability to iterate through different affine regions to find local minima of feed-forward ReLU neural networks, whereas the standard simplex algorithm does not leave its hard-coded feasible region.

For linear programming problems there exist inner point methods which have polynomial runtime for the required number of iterations in terms of the number of conditions and the dimensionality. Their iteration steps are within the feasible region of the considered linear program. In contrast, for deep ReLU programming problems it is very unlikely that the  currently considered affine region of the objective function has a local minimum of $f$ as one the vertices on its boundary. Instead probably many different affine regions have to be traversed such that it is questionable whether a transfer of ideas from inner point methods from linear programming to deep ReLU programming is beneficial.

For the variants of the standard simplex algorithm, it is generally known that the worst case number of iterations until convergence is exponential in the number of variables and inequality conditions. The reformulation of a linear programming problem as a deep ReLU programming problem presented in Section~\ref{sec:drlpproblem} shows that our DRLSimplex algorithm inherits this exponential worst case number of iterations as it also iterates on vertices.
\subsubsection{Relation to gradient descent procedures}
Given a differentiable function $\varphi:\mathbb{R}^{n_0}\to\mathbb{R}$ and a starting point $x_0\in\mathbb{R}^{n_0}$, gradient descent-like algorithms iteratively compute a sequence of points $x_0, x_1,\ldots\in\mathbb{R}^{n_0}$ with an update rule of the form
\begin{equation}
	\label{eq:updateRule}
	x_{i+1}=x_i-\eta_i\nabla \varphi(x_i),\quad i\in\mathbb{N}
\end{equation} for positive step size parameters $\eta_0,\eta_1,\ldots\in\mathbb{R}$. This is motivated by the fact that for every $x\in\mathbb{R}^{n_0}$
\begin{equation*}
	\mathbb{R}^n_0\setminus \left\{ 0 \right\}\to\mathbb{R}, v\mapsto \lim_{t\to 0}\frac{\varphi(x+tv)-\varphi(x)}{t\|v\|_2}=\langle \frac{v}{\|v\|_2},\nabla\varphi(x)\rangle
\end{equation*}
is minimized by $v=-\alpha \nabla\varphi(x)$, $\alpha>0$ by Cauchy-Schwartz inequality. However, this is only the optimal direction to decrease $\varphi$ in an infinitely small neighbourhood around $x$ such that the step size in equation~\eqref{eq:updateRule} has to be chosen with care. The DRLSimplex algorithm has a similar update rule, however the arguments generated in its iterations are placed on vertices which are the boundaries between affine-regions. Here the gradient is not defined and instead of the gradient, an axis separating such regions is used as the direction for the next iteration step. 

The step size is automatically determined maximally, such that the assumed affine behaviour is still valid. This is achieved by exploiting the structure of feed-forward ReLU neural networks. 
In contrast to standard gradient descent-like algorithms this adaptation to such functions allows the update procedure to decrease the function value at every iteration. This is a very strong property since every new iteration will either strictly decrease the objective function or our algorithm stops. For regular networks with vertices, this stopping condition explained in Section~\ref{sec:drlp} exactly determines local minima such that for these networks, our algorithm finds the exact position of a local minimum after a finite number of iterations. Usually, other gradient descent-like algorithms do not have these properties but they are not restricted to piece-wise affine functions.

Whenever it is defined, the gradient $\nabla f(x)$ for a feed forward neural network $f$ as in equation~\eqref{eq:f} at position $x\in\mathbb{R}^{n_0}$ can be computed in $\mathcal{O}(\sum_{l=0}^L n_{l+1}n_l)$ by the chain rule in calculus and also the update rule~\eqref{eq:updateRule} is of this order. In contrast, we have shown in Section~\ref{sec:computationalComplexity} that one iteration in our DRLSimplex algorithm is of order $\mathcal{O}(\sum_{l=0}^Ln_{l+1}n_{l}+n_0^2)$. In particular, when $n_0^2$ is dominated by one of $n_{i+1}n_{i}$, $i\in\left\{ 0,\ldots,L \right\}$, one iteration in our algorithm is of the same order as in gradient descent for $\max(n_0,\ldots,n_{L+1})\to\infty$. 

Concerning the required number of iterations until convergence it is plausible that for large $n_0,\ldots,n_{L}$, the automatically selected step size in deep ReLU programming is small because the input space $\mathbb{R}^{n_0}$ is split into many small regions, each with its own affine behaviour. This can slow down the minimization progress, especially at the beginning when large step sizes in equation~\eqref{eq:updateRule} are appropriate. In contrast, at later stages of the minimization process, when a small step size has to be chosen in gradient descent-like algorithms to further minimize the objective function, the automatic step size-control of the DRLSimplex algorithm can be beneficial such that a hybrid algorithm may be interesting, especially because of the exponential number of iterations in a worst case scenario inherited from the standard simplex algorithm from linear programming.

Furthermore the use of an axis that is separating affine regions of $f$ instead of its gradient can reduce the number iteration steps needed in situations as depicted in Figure~\ref{fig:axisVsGradient}. In this case, gradient descent-like algorithms will oscillate between two affine regions because for every new iteration, they will follow direction of the gradient and will eventually overjump the region boundary. While modifications such as the momentum method~\cite{qian1999momentum} can circumvent this problem to some extend, the DRLSimplex algorithm would first find the intersection point and then follow the region separating axis in the direction that decreases the function value until a neuron activation changes, thus completely avoiding unnecessary oscillations. 
\begin{figure}[htpb]
\centering
\includegraphics[width=0.5\textwidth]{./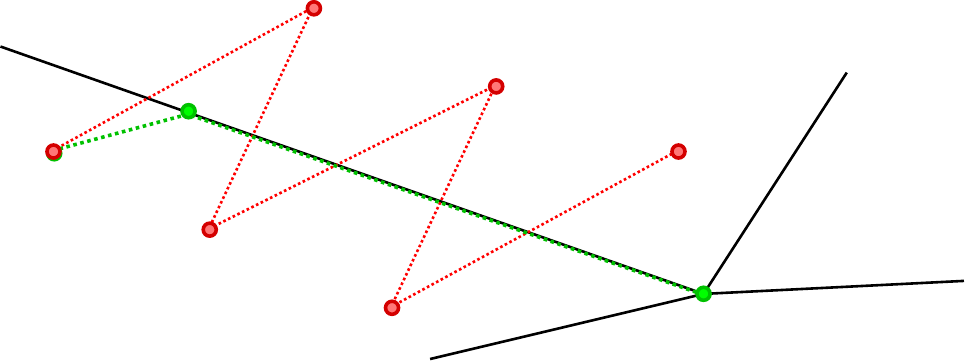}
\caption{Qualitative comparison of possible behaviours of the DRLSimplex algorithm (green) versus other gradient descent-like algorithms (red) in three dimensions (two-dimensional input space) in starting at the same position $x_0\in\mathbb{R}^2$. The dotted lines connect the points corresponding to subsequent iteration steps. The black lines represent the boundaries of the affine regions of $f$.}
\label{fig:axisVsGradient}
\end{figure}

\subsubsection{Possible use cases and contribution}

Theoretical foundations such as the stability concept of several quantities are the justification for the use these quantities as state parameters in our algorithm and guarantee that derived quantities like the critical neurons are not specific to the choice of the compatible activation pattern at the current position. In this sense, our theoretical concepts and results are interesting in their own right because they are useful for the construction of similar linear-programming or active set related algorithms for the minimization of feed-forward ReLU neural networks. In this context also the algorithmic primitives we presented in Section~\ref{sec:algPrimitives} may be relevant.

Our DRLSimplex algorithm generalizes the standard simplex algorithm from linear programming because it can iterate on vertices generated by a feed-forward neural network and not only on vertices given by the intersection of hyperplanes. Compared to the simplex algorithm, it can change the feasible region and does this in a computationally efficient way such that the computational complexity $\mathcal{O}(\sum_{l=1}^{L}n_{l}n_{l-1}+n_0^2)$ per iteration still reflects the unavoidable terms $\sum_{l=1}^L n_ln_{l-1}$ (number of network parameters) and $n_0^2$ (simplex tableau) and is therefore not negatively affected.

Important use cases of our DRLSimplex algorithm emerge from the insight that several L1 loss functions relevant for machine-learning can be rewritten as a minimization problem of a ReLU feed-forward neural network. As we have shown in Section~\ref{sec:examples}, corresponding special purpose solver algorithms such as the Barrodale-Roberts algorithm for quantile regression of the BRCENS algorithm for CLAD regression can be seen as incarnations of our DRLSimplex algorithm for one- and two-hidden-layer networks respectively. More generally, our algorithm provides a novel training method for the first layer parameters of a feed forward neural network to find a local minimum of the L1 loss with fixed training data. Compared to other training methods we get exact positions of vertices and local minima and furthermore obtain a strictly decreasing training loss at every new vertex returned by our algorithm. A hybrid combination with other gradient-descent algorithms might provide additional benefits in an early training phase where fast training is more important than guaranteed strict training loss decrease.

Furthermore, the iteration on vertices might provide a new tool for an empirical vertex density analysis in the case where the training loss can be written as a ReLU feed-forward neural network. For example as discussed previously in Example~\ref{ex:training}, the loss decay behaviour there seems to behave exponential in the number of visited vertices such that this training loss curve might contain information on the quality of a local minimum converged to. In addition, local vertex density could be estimated by our DRLSimplex algorithm using the distance of vertex positions which in turn could serve as an input to tune or improve the step size control in other gradient descent training algorithms.
\section{Summary}
We have introduced the class of deep ReLU programming problems as non-convex optimization problems of finding a local minimum of a real-valued feed-forward neural network $f$ with ReLU activation functions. These functions are piece-wise affine on domains which are given by sets of linear inequalities. In particular, if restricted to one such domain, the resulting minimization problem has the form of a linear programming problem where the restricted domain takes the role of the feasible region. Hence, deep ReLU programming can be seen as a generalization of linear programming with the difference that we allow multiple feasible regions, each with its own affine objective function such that the overall function is continuous.

We developed notation and theory suitable for the analysis of these functions in Section~\ref{sec:analysis}. 
For an input value $x$ we defined activation patterns $s$ to be compatible if replacing the ReLU activation functions by hard-coded activities specified in $s$ preserves the output and all intermediate layer values of $f$, i.e. if the network $f$ can be described by fixed affine layer maps induced by $s$. We call quantities \emph{objective} when they are defined based on the true network $f$ and \emph{subjective} when they are defined based on this fixed affine layer network induced by some activation pattern $s$. Quantities that have an objective and meaningful subjective versions are \emph{stable}, if the objective version coincides with the subjective versions for compatible $s$. We have introduced several local network quantities such as the critical kernel and the critical indices and proved stability. We further formulated the concept of \emph{regularity} which is a requirement in our stability result for the critical indices. For a random continuous independent distribution on its parameters the resulting network is almost surely regular which shows that regularity is not a strong requirement. Our stability results lay the foundation for algorithms that use the activation of the neurons as a state parameters because for they identify subjective quantities that do not need to be updated when the state parameter $s$ is changed from one compatible activation pattern to another, thus a change of the ``feasible region'' identified by $s$ without the need to update or recompute such quantities. We then formalized the notion of a vertex, its corresponding axes that separate affine regions of $f$ and formulated a necessary and sufficient condition for $x$ being a local minimum of $f$.
	
In addition we provided algorithmic subroutines suitable for the construction of iterative optimization algorithms for deep ReLU programming problems in Section~\ref{sec:algPrimitives}. In Section~\ref{sec:description} we combined these subroutines with our theoretical concepts and results to formalize our DRLSimplex algorithm, a generalization of the standard simplex algorithm to deep ReLU programming that can be used to iteratively optimize regular feed-forward ReLU neural networks. In its first phase it first finds a vertex and then iterates on vertices while changing the affine region and keeping track of a compatible activation pattern and a set of critical indices that define the region separating axes to proceed with. The automatic step size selection exploits the piece-wise constant gradient and ensures that the objective function cannot increase across iterations. Our algorithm stops based on the local minimum condition for regular networks we formulated previously. For an $L$ layer network of widths $n_0,\ldots,n_L$ we showed that every iteration is of computational complexity $\mathcal{O}(\sum_{i=1}^Ln_i n_{i-1}+n_0^2)$, where the first and the second terms reflect the number of parameters and simplex tableau size, such that this complexity order is optimal for an extension of the simplex algorithm to ReLU neural networks. We further proposed modifications for better numeric stability, other activation functions, alternatives to vertex iterations and generalizations for mixtures of feed forward ReLU nets and quadratic forms as objective functions.

For our DRLSimplex algorithm we provided a proof-of-concept implementation in the Julia programming language in Section~\ref{sec:juliaImplementation} which is demonstrated in Section~\ref{sec:examples} at optimization problems that can be rewritten in the form of a deep ReLU programming problem. We showed that our algorithm can be applied in LASSO penalized quantile regression, censored least absolute deviation and even for training of the first layer of a deep ReLU network using L1 loss. For the first and second applications in this list, there exist specialized linear programming solver algorithms that can be seen as special cases of our DRLSimplex algorithm, for the third example one usually resorts to standard neural network training algorithms not specifically designed to take advantage of the piece-wise affine structure of the ReLU network to be trained. Here our algorithm provides a simplex-like iteration on vertices of the loss function such that the decreases in every step. In future work it will be interesting to apply deep linear programming for training of not only the first layer of ReLU neural networks but also other layers iteratively. This would yield a training procedure which monotonically decreases the L1 training loss across iterations, a major advantage compared to other optimization methods. We also demonstrated how our DRLSimplex method can be generalized for objective functions that are a mixture of a feed-forward ReLU neural network and a positive definite form: Our implementation is able to successfully perform LASSO optimization.

We finally discussed our DRLSimplex algorithm and compared it to linear programming and gradient descent variants in Section~\ref{sec:discussion}.
The overall computational complexity for an iteration in our algorithm of $\mathcal{O}(n_0^2+\sum_{i=1}^Ln_in_{i-1})$ allows it to be considered as a new competitive optimization tool. Especially for loss functions that can be rewritten in the form of a feed-forward ReLU neural network, in a combination other gradient-descent based algorithms could benefit from the strictly decreasing loss guaranteed by our algorithm's automatic step size control or the fact that exact vertex positions are returned. In particular the latter property could be used to empirically analyze the vertex density of such loss functions which in turn could provide new insight in their structure or allow for improved step-size control of gradient-descent algorithms based on local vertex density. 

We hope that our theoretical framework, our algorithmic building blocks and our DRLSimplex algorithm open the door for the application of linear programming and active-set related techniques to feed-forward ReLU neural networks. This could help to analyze and better understand the vertex structure of loss functions related to ReLU neural networks and might eventually lead to improved neural network training algorithms.
\label{sec:summary}

\newpage
\appendix

\section{Proofs and auxiliary results}
\label{app:proofs}
In the sequel, we use the notation introduced in Sections~\ref{sec:introduction} and \ref{sec:analysis}.
\subsection{Compatibility}
\label{app:compatibility}
\begin{proof}[Proof of Theorem~\ref{thm:argumentRepresentation}]
Fix $s\in \mathcal{S}^C(x)$. For $(l,j)\in \mathcal{I}$, if $s_{lj}=0$ then by equation~\eqref{eq:compatibleSignatures} $\textnormal{sign}(A(x)_{lj})=H(x)_{lj}\in\left\{ 0,-1 \right\}$ and therefore $\textnormal{ReLU}(A(x)_{lj})=0$. Similarly $s_{lj}=1$ implies $\textnormal{ReLU}(A(x)_{lj})=A(x)_{lj}$. Hence $g^{(1)}(x)=\tilde g^{(1)}_{s_1}(x)$ and for $l\in\left\{ 2,\ldots,L \right\}$,
\begin{equation*}
g^{(l)}\circ\cdots\circ g^{(1)}(x)=\textnormal{ReLU.}(A(x)_{l})
=\textnormal{diag}(s_l)A(x)_l=\tilde g^{(l)}_{s_l}\circ g^{(l-1)}\circ\cdots\circ g^{(1)}(x).
\end{equation*}A recursive application of the above formula yields $A(x)=\tilde{A}_s(x)$.

Now fix $s\in\tilde{\mathcal{S}}^C(x)$. By equation~\eqref{def:representingSignaturesFixed} $s$ is compatible with $\tilde{H}_s(x)$. In particular for $(l,j)\in\mathcal{I}$,  $s_{lj}=0$ implies $\textnormal{sign}(\tilde A_s(x)_{lj})=\tilde H_s(x)_{lj}\in\left\{ -1,0 \right\}$ and $\textnormal{ReLU}(\tilde A_s(x)_{lj})=0$. Similarly $s_{lj}=1$ implies $\textnormal{ReLU}(\tilde A_s(x)_{lj})=\tilde{A}_s(x)_{lj}$. This implies
\begin{equation}
\label{eq:reluDiag}
\forall l\in\left\{ 1,\ldots,L \right\}\quad \textnormal{ReLU.}(\tilde A_s(x)_l)=\textnormal{diag}(s_l)\tilde A_s(x)_l 
\end{equation}
We now prove  $A(x)=\tilde A_s(x)$ by induction.  Note that $A(x)_1=W_1x+b_1=\tilde A_s(x)_1$.  
For the induction step assume $A(x)_l=\tilde A_s (x)_{l}$ for some $l\in\left\{ 1,\ldots,L-1 \right\}$. Then by equation~\eqref{eq:reluDiag}
\begin{equation*}
	g^{(l)}\circ\cdots\circ g^{(1)}(x)=\textnormal{ReLU.}(A(x)_{l})=\textnormal{ReLU.}(\tilde A_{s}(x)_{l})=\textnormal{diag}(s_{l})(\tilde A_s(x)_{l})=\tilde g_{s_l}^{(l)}\circ\cdots\circ \tilde g_{s_1}^{(1)}(x).
\end{equation*} This implies that $A(x)_{l+1}=W_{l+1}g^{(l)}\circ\cdots\circ g^{(1)}(x)+b_{l+1}=W_{l+1} \tilde{g}_{s_l}^{(l)}\circ\cdots\circ \tilde{g}_{s_1}^{(1)}+b_{l+1}=\tilde A_s(x)_{l+1}$. An induction on $l$ yields $A(x)=\tilde A_s(x)$.

Therefore we have shown that $s\in\mathcal{S}^C(x)\cup\tilde{\mathcal{S}}^C(x)$ implies $A(x)=\tilde A_s(x)$, hence $H(x)=\tilde H_s(x)$ and therefore $\mathcal{S}^C(x)=\tilde{\mathcal{S}}^C(x)$.
\end{proof}
\begin{proof}[Proof of Corollary~\ref{cor:representation}]
This follows from Theorem~\ref{thm:argumentRepresentation} because $\tilde f_s(x)= W_{L+1}\textnormal{ReLU}.(\tilde A_{s}(x)_L)+b_{L+1}$ and $f(x)=W_{L+1}\textnormal{ReLU}(A(x)_L)+b_{L+1}$.
\end{proof}
\begin{proof}[Proof of Corollary~\ref{cor:sameHyperplanePatternSet}]
Let $y\in \mathbb{R}^{n_0}$ with $H(y)=H(x)$. Then $H(y)$ and $H(x)$ are compatible with $s$ such that Theorem~\ref{thm:argumentRepresentation} implies that $\tilde H_s(y)=\textnormal{sign.}(\tilde{A}_s(y))=\textnormal{sign.}(A(y))=H(y)$ and similarly $\tilde H_s(x)=H(x)$. In particular $\tilde H_s(y)=\tilde H_s(x)$. For the other direction assume $y\in\mathbb{R}^{n_0}$ with $\tilde H_s(y)=\tilde H_s(x)$. By assumption and again Theorem~\ref{thm:argumentRepresentation} it holds that $s\in\tilde{\mathcal{S}}^C(x)$, i.e. $s$ is compatible with $\tilde H_s(x)$ and hence also with $\tilde H_s(y)$ such that $s\in\tilde{\mathcal{S}}^{C}(y)$. Like above, it follows from the same theorem that $H(y)=\tilde H_s(y)=\tilde H_s(x)=H(x)$.
\end{proof}
\subsection{Local Behaviour}
\begin{lemma}
\label{lem:localCompat}
For all $x\in\mathbb{R}^{n_0}$, there exists $\varepsilon>0$ such that for all $y\in B_\varepsilon(x)$ it holds that $\mathcal{S}^C(y)\subset\mathcal{S}^C(x)$.
\begin{proof}
Let $\varepsilon>0$ such that $H(x)_{lj}=H(y)_{lj}$ for $y\in B_\varepsilon(x)$ and $(l,j)\in\mathcal{I}\setminus C(x)$. By Lemma~\ref{lem:zero} $H(x)_{lj}=0$ for $(l,j)\in C(x)$ such that for all $y\in B_\varepsilon(x)$ any $s\in\mathcal{S}$ compatible with $H(y)$ is also compatible with $H(x)$.
\end{proof}
\end{lemma}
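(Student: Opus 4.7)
The plan is to produce an $\varepsilon>0$ such that, inside $B_\varepsilon(x)$, the hyperplane pattern can only differ from $H(x)$ at the critical indices $C(x)$, and then observe that critical indices pose no obstruction to compatibility because they are zero at $x$.

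First, I would use continuity of the ReLU arguments. For every non-critical index $(l,j)\in\mathcal{I}\setminus C(x)$, the map $y\mapsto A(y)_{lj}$ is continuous, and by definition of $C(x)$ (equation~\eqref{eq:C}) there is some $\varepsilon_{lj}>0$ with $H(y)_{lj}=H(x)_{lj}$ for all $y\in B_{\varepsilon_{lj}}(x)$. Since $\mathcal{I}$ is finite, setting $\varepsilon=\min\{\varepsilon_{lj}:(l,j)\in\mathcal{I}\setminus C(x)\}$ gives a uniform radius on which $H(\cdot)_{lj}$ agrees with $H(x)_{lj}$ simultaneously at every non-critical index.

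Next, I would pick an arbitrary $s\in\mathcal{S}^C(y)$ for $y\in B_\varepsilon(x)$ and verify $s\in\mathcal{S}^C(x)$ by checking the defining inequality $H(x)_{lj}(s_{lj}-0.5)\ge0$ coordinate-wise. For $(l,j)\in\mathcal{I}\setminus C(x)$, we have $H(x)_{lj}=H(y)_{lj}$, so compatibility of $s$ with $H(y)$ at this position immediately transfers to compatibility with $H(x)$. For $(l,j)\in C(x)$, Lemma~\ref{lem:zero} gives $H(x)_{lj}=0$, making the product $H(x)_{lj}(s_{lj}-0.5)=0\ge0$ trivially regardless of $s_{lj}$. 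Both cases combined show $s\in\mathcal{S}^C(x)$, yielding the claimed inclusion.

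I do not foresee a genuine obstacle; the proof is essentially bookkeeping on top of two already-established facts (continuity of $A$ away from sign changes, and vanishing of $H(x)$ at critical indices). The only subtlety worth flagging is that the inclusion is genuinely one-sided: a compatible $s$ at $y$ may cease to be \emph{strictly} compatible at $x$ in the sense that $s$ could pick up new ``$0$'' entries, but this only enlarges the compatible set, which is precisely why $\mathcal{S}^C(y)\subset\mathcal{S}^C(x)$ rather than equality holds.
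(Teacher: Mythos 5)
Your proof is correct and follows exactly the same route as the paper's: choose $\varepsilon$ so that the hyperplane pattern is locally constant at all non-critical indices, then invoke Lemma~\ref{lem:zero} to conclude that $H(x)_{lj}=0$ at critical indices, where compatibility holds vacuously. The only difference is that you spell out the coordinate-wise verification that the paper leaves implicit.
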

This means that  for every input $x\in\mathbb{R}^{n_0}$ there exists a neighbourhood such that for all $y$ in this neighbourhood all $s$ compatible with $H(y)$ are also compatible with $H(x)$.
\begin{proof}[Proof of Lemma~\ref{lem:tildeKernel}]
	If $v^*\in\tilde{\textnormal{Ker}}{}^C_s(x)$, then there exists $\varepsilon^*>0$ such that for all $(l,j)\in \tilde C_s(x)$ and $t\in(-\varepsilon,\varepsilon)$ $\tilde H_s(x+tv^*)_{lj}=\tilde H_s(x)_{lj}=0$. In particular for all $t\in(-\varepsilon,\varepsilon)$ it holds that $\tilde A_s(x+tv^*)_{lj}=0$. By affinity of $z\mapsto \tilde A_s(z)_{lj}$, it holds that $\tilde A_s(x+v^*)_{lj}=0$. 
For the other direction assume that $v^*\in\mathbb{R}^{n_0}$ with $\tilde A_s(x+v^*)_{lj}=0$ for $(l,j)\in\tilde C_s(x)$. Let $\varepsilon^*>0$ such that for all $v\in B_{\varepsilon^*}(0)$ and $(l,j)\mathcal{I}\setminus\tilde C_s(x)$ it holds that $\tilde H_s(x+v)_{lj}=\tilde H_{s}(x)_{lj}$. This is possible by equation~\eqref{eq:subjectiveC}. Then for all $t\in\mathbb{R}^{n_0}$ such that $t v^*\in B_{\varepsilon^*}(0)$ it holds that $\tilde H_s(x+tv^*)=\tilde H_s(x)$ since also for indices $(l,j)\in \tilde C_s(x)$ it holds that $\tilde H_s(x+tv)=0=\tilde H_s(x)$ by Lemma~\ref{lem:zero}.
\end{proof}
\begin{proof} [Proof of Lemma~\ref{lem:hyperKernelEquality}]
By Corollary~\ref{cor:sameHyperplanePatternSet} $\big\{ v\in\mathbb{R}^{n_0}\mid H(x+v)=H(x) \big\}=\big\{ v\in\mathbb{R}^{n_0}\mid \tilde H_s(x+v)=\tilde H_s(x) \big\}$. Let $\varepsilon^*>0$ small enough such that $H_s(x+v)_{ij}$ is constant for indices $(i,j)\in \mathcal{I}\setminus \tilde C_s(x)$ and $v\in B_{\varepsilon^*}(0)$. This is possible by equation~\eqref{eq:subjectiveC}. For all other indices $(i,j)\in \tilde C_s(x)$, the subjective hyperplane pattern satisfies $\tilde H_s(x)_{ij}=0$ by Lemma~\ref{lem:zero}. It follows that $B_{\varepsilon^*}(x)\cap \big\{ v\in\mathbb{R}^{n_0}\mid \tilde H_s(x+v)=\tilde H_s(x) \big\}= B_{\varepsilon^*}(x)\bigcap_{(l,j)\in \tilde C_s(x)}\left\{ v\in\mathbb{R}^{n_0}\mid \tilde A_s(x+v)_{lj}=0 \right\}$. Lemmas~\ref{lem:tildeKernel} and \ref{lem:sameKernel} complete the proof.
\end{proof}

\subsection{Regularity}
\label{app:regularity}
\subsubsection{Basic results}
\begin{lemma}
	\label{lem:collinear}
	Assume that $x\in\mathbb{R}^{n_0}$, $(l^*,j^*)\in \mathcal{I}$ and $s,s'\in \mathcal{S}^C(x)$ with $s_{lj}=s'_{lj}$ for $(l,j)\in\mathcal{I}\setminus \left\{ (l^*,j^*) \right\}$ and $s_{l^*j^*}\neq s'_{l^*j^*}$. If $v_{s,\tilde l,\tilde j}=0$ and $v_{s',\tilde l,\tilde j}\neq 0$ for some $(\tilde l, \tilde j)\in \mathcal{I}$ then $v_{s',\tilde l,\tilde j}=\alpha v_{s,l^*,j^*}$ for some $\alpha\in\mathbb{R}$.
	\begin{proof}
		By equation~\eqref{eq:v} $\tilde l>l^*$ and 
		\begin{equation*}
		v_{s',\tilde l,\tilde j}=v_{s',\tilde l,\tilde j}-v_{s,\tilde l,\tilde j}=\left( W_{\tilde l}\textnormal{diag}(s_{\tilde l-1})W_{l-1}\cdots \textnormal{diag}(s_{l^*+1})W_{l^*+1} \right)_{\tilde j j^*} v_{s,l^*,j^*}
	\end{equation*} for $s'_{j^*l^*}=1$. Similarly, for $s'_{l^*j^*}=0$ we obtain the negative right-hand side.
	\end{proof}
\end{lemma}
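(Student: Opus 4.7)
The plan is to compute the difference $\tilde v_{s',\tilde l,\tilde j}-\tilde v_{s,\tilde l,\tilde j}$ directly from the explicit product formula~\eqref{eq:v}, exploiting the fact that $s$ and $s'$ differ only at the single neuron $(l^*,j^*)$. First I would observe that the hypotheses force $\tilde l>l^*$: for $\tilde l\le l^*$ the product in~\eqref{eq:v} only involves $\textnormal{diag}(s_1),\ldots,\textnormal{diag}(s_{\tilde l-1})$, none of which depend on $s_{l^*}$, hence $\tilde v_{s,\tilde l,\tilde j}=\tilde v_{s',\tilde l,\tilde j}$, contradicting that one is zero and the other is not.

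With $\tilde l>l^*$, the only factor at which the two product representations of $\tilde v_{s,\tilde l,\tilde j}$ and $\tilde v_{s',\tilde l,\tilde j}$ differ is $\textnormal{diag}(s_{l^*})$ versus $\textnormal{diag}(s'_{l^*})$. The key observation is the rank-one identity
\[
\textnormal{diag}(s'_{l^*})-\textnormal{diag}(s_{l^*})=(s'_{l^*j^*}-s_{l^*j^*})\,e_{j^*}e_{j^*}^{\intercal}.
\]
Substituting this into the difference of the two matrix products factors the result, because the right portion $e_{j^*}^{\intercal}W_{l^*}\textnormal{diag}(s_{l^*-1})\cdots W_1$ is exactly the row $\tilde v_{s,l^*,j^*}^{\intercal}$. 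Concretely, this yields
\[
\tilde v_{s',\tilde l,\tilde j}-\tilde v_{s,\tilde l,\tilde j}=(s'_{l^*j^*}-s_{l^*j^*})\,c_{\tilde j}\,\tilde v_{s,l^*,j^*},
\]
where $c_{\tilde j}$ is the $(\tilde j,j^*)$-entry of $W_{\tilde l}\textnormal{diag}(s_{\tilde l-1})\cdots\textnormal{diag}(s_{l^*+1})W_{l^*+1}$, a real scalar.

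Plugging in the hypothesis $\tilde v_{s,\tilde l,\tilde j}=0$ gives $\tilde v_{s',\tilde l,\tilde j}=\alpha\,\tilde v_{s,l^*,j^*}$ with $\alpha=(s'_{l^*j^*}-s_{l^*j^*})\,c_{\tilde j}\in\mathbb{R}$, which is the claim. The only real obstacle is notational bookkeeping: one must carefully track how the rank-one difference inside the telescoping matrix product splits off a left prefactor that depends on $\tilde j$ from a right suffix that is precisely the $j^*$-th row associated with layer $l^*$. Once the rank-one decomposition of the diagonal difference is spotted, no regularity or compatibility arguments are required, and the case $s'_{l^*j^*}-s_{l^*j^*}=-1$ simply produces the opposite sign of $\alpha$.
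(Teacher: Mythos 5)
Your proposal is correct and follows essentially the same route as the paper's proof: both deduce $\tilde l>l^*$ from the fact that the product formula~\eqref{eq:v} is unaffected by $s_{l^*}$ when $\tilde l\le l^*$, and both compute the difference $\tilde v_{s',\tilde l,\tilde j}-\tilde v_{s,\tilde l,\tilde j}$ by isolating the single changed diagonal entry, which splits the telescoping product into a scalar prefactor times $\tilde v_{s,l^*,j^*}$. Your explicit rank-one decomposition $\textnormal{diag}(s'_{l^*})-\textnormal{diag}(s_{l^*})=(s'_{l^*j^*}-s_{l^*j^*})e_{j^*}e_{j^*}^{\intercal}$ is just a cleaner way of writing the same computation the paper carries out in one line.
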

\begin{proof}[Proof of Theorem~\ref{thm:regularity}]
		For two compatible activation patterns $s,s'\in \mathcal{S}^C(x)$ that differ only at one position $(l^*,j^*)\in \mathcal{I}$, it holds that $(l^*,j^*)\in \tilde C_s(x)\cap \tilde C_{s'}(x)$ by equations~\eqref{eq:subjectiveArgs} and \eqref{eq:subjectiveC}. If $\tilde C_s(x)\neq \tilde C_{s'}(x)$ then there exists $(l',j')\in \tilde C_{s}(x)\cup \tilde C_{s'}(x)$ with both, $v_{s,l^*,j^*}=0$ and $v_{s',l^*,j^*}\neq 0$ or both, $v_{s',l^*,j^*}=0$ and $v_{s,l^*,j^*}\neq 0$. But by Lemma~\ref{lem:collinear} this violates the regularity assumption such that $\tilde C_s(x)=\tilde C_{s'}(x)$.

		For two arbitrary compatible activation patterns $s,s'\in \mathcal{S}^C(x)$ that differ at $k>1$ positions we can find a transformation path $s=:s^{(1)},\ldots,s^{(k+1)}:=s'\in \mathcal{S}^C(x)$ such that two subsequent activation patterns $s^{(i)},s^{(i+1)}$, $i\in\left\{ 1,\ldots,k \right\}$ differ only at one position $(l_i,j_i)\in \mathcal{I}$. Now the above result implies $\tilde C_{s^{(1)}}(x)=\ldots = \tilde C_{s^{(k)}}(x)$, in particular $\tilde C_s(x)=\tilde C_{s'}(x)$. 

		We now have shown that there exists $C\subset \mathcal{I}$ with $\tilde C_s(x)=C^*$ for all $s\in\mathcal{S}^C(x)$. Now equations~\eqref{eq:C} and $\eqref{eq:subjectiveC}$ imply that this set $C^*$ of indices must be exactly $C(x)$.
	\end{proof}
\subsubsection{Probabilistic results on regular vertices}
Below we use the notation and concepts of Section~\ref{sec:analysis}. For $s\in\mathcal{S}$, $(l,j)\in\mathcal{I}$ let
\begin{equation}
	\label{eq:hyperplanes}
	H^s_{lj}:=\left\{ x\in\mathbb{R}^{n_0}\middle\vert\;\langle x,\tilde v_{s,l,j}\rangle+
		\left( \sum_{i=1}^{l-1} W_l\textnormal{diag}(s_{l-1})\cdots W_{i+1}\textnormal{diag}(s_i)b_i \right)_j+b_{lj} 
	=0\right\}.
\end{equation}
Note that $H^s_{lj}=\big\{ x\in\mathbb{R}^{n_0}\mid\; \tilde H_s(x)_{lj}=0 \big\}$.
\begin{lemma}
Assume the $\sum_{i=2}^{L+1}n_ln_{l-1}$ weight parameters of the matrices $W_1,\ldots,W_{L}$ and the $\sum_{i=2}^{L+1}n_i$ bias parameters of the vectors $b_1,\ldots,b_L$ of the neural network $f$ are sampled from a distribution such that conditionally on the weight parameters, the bias parameters are independent with a conditional marginal distribution that assigns probability zero to all finite sets. 
Then the following holds almost surely: For all $n\in\mathbb{N}$, $s_1,\ldots,s_n\in\mathcal{S}$, $(l_1,j_1),\ldots,(l_n,j_n)\in\mathcal{I}$ with non-zero normal vectors $\tilde v_{s_1,l_1,j_1},\ldots,\tilde v_{s_n,l_n,j_n}$ the hyperplanes $H^{s_1}_{l_1,j_1},\ldots,H^{s_n}_{l_n,j_n}$ defined by equation~\eqref{eq:hyperplanes} satisfy either $\cap_{i=1}^n H^{s_i}_{l_i,j_i}= \left\{  \right\}$ or the normal vectors $\tilde v_{s_1,l_1,j_1},\ldots,\tilde v_{s_n,l_n,j_n}$ are linearly independent.
	\label{lem:randomHyperplanes}
	\begin{proof}
		We proceed by induction. For $n=1$ the claim is obviously true. Now assume the claim is true for $n=n^*\in\mathbb{N}$ and let $s_1,\ldots,s_{n^*+1}\in\mathcal{S}$, $(l_1,j_1),\ldots,(l_{n^*+1},j_{n^{*}+1})\in\mathcal{I}$. Conditionally on the weight parameters, the normal vectors 
		\begin{equation}
			\label{eq:normalvectorsProof}
			\tilde v_{s_1,l_1,j_1},\ldots,\tilde v_{s_{n^*+1},l_{n^*+1},j_{n^*+1}}
		\end{equation} are almost surely constant and by assumption the bias parameters are independent with a continuous distribution. We distinguish two cases:
\begin{enumerate}
	\item Conditionally on the weight parameters, at least one of the above normal vectors is equal to zero.
	\item Conditionally on the weight parameters, all of the above normal vectors are nonzero. In this case let $k\in\left\{ 1,\ldots,n^*+1 \right\}$ with $l_k=\max_{i\in\left\{ 1,\ldots,n^* \right\}}l_i$ and let $J:=\left\{ 1,\ldots,n^*+1 \right\}\setminus\{k\}$. Now we condition on the weight parameters and on $b_{l_i,j_i}$, $i\in J$. In this case, the bias vector $b_{l_k,j_k}$ has a continuous distribution. If $\cap_{i=1}^{n^*+1}H^{s_i}_{l_i,j_i}\neq \{\}$ then by induction assumption the vectors $\tilde v_{s_i,l_i,j_i}$, $i\in J$ are linearly independent. If $\tilde v_{s_i,l_i,j_i}$, $i\in\left\{ 1,\ldots,n^* \right\}$ are linearly dependent, $\cap_{i=1}^{n^*+1}H^{s_i}_{l_i,j_i}\neq \{\}$ implies that $\cap_{i\in J}H^{s_i}_{l_i,j_i}\subset H^{s_k}_{l_k,j_k}$ and this is only possible when $b_{l_k,j_k}$ attains a specific value $b^*\in\mathbb{R}^*$ which almost surely does not happen.
\end{enumerate}
The above discussion shows that with probability zero the normal vectors in equation \eqref{eq:normalvectorsProof} are all non-zero, linearly dependent and the hyperplanes satisfy $\cap_{i=1}^{n^*+1}H^{s_i}_{l_i,j_i}\neq \{\}$. Since there are only finitely many choices for $s_1,\ldots,s_{n^*+1}\in\mathcal{S}$, $(l_1,j_1),\ldots,(l_{n^*+1},j_{n^{*}+1})\in\mathcal{I}$ the claim holds almost surely for $n=n^*+1$.
	\end{proof}
\end{lemma}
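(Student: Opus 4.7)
I would induct on $n$, using that by equation~\eqref{eq:v} each normal vector $\tilde v_{s,l,j}$ depends only on the weight matrices, whereas the constant term of the hyperplane in~\eqref{eq:hyperplanes} also involves biases that, conditionally on the weights, are independent with continuous marginals. The base case $n=1$ is immediate, since a single non-zero vector is linearly independent.

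For the inductive step, fix $(s_i,l_i,j_i)_{i=1}^{n^*+1}$ with all $\tilde v_{s_i,l_i,j_i}\neq 0$ and condition on the weight matrices, which renders the normal vectors deterministic. If they are linearly independent the second disjunct holds, so assume linear dependence. Choose an index $k$ with $l_k$ maximal. Inspecting
\begin{equation*}
\beta_i\;=\;b_{l_i,j_i}+\Bigl(\sum_{m=1}^{l_i-1}W_{l_i}\textnormal{diag}(s_{i,l_i-1})\cdots W_{m+1}\textnormal{diag}(s_{i,m})b_m\Bigr)_{j_i},
\end{equation*}
the scalar $b_{l_k,j_k}$ enters $\beta_k$ with coefficient $1$ and enters $\beta_i$ with coefficient $0$ for every $i\neq k$ with $(l_i,j_i)\neq(l_k,j_k)$, because maximality of $l_k$ excludes $l_i>l_k$ and within the same layer a different $j$-index touches a different coordinate of $b_{l_k}$. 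Apply the inductive hypothesis to $J:=\{1,\ldots,n^*+1\}\setminus\{k\}$: either $\bigcap_{i\in J}H^{s_i}_{l_i,j_i}=\emptyset$ and the full intersection is empty, or $(\tilde v_{s_i,l_i,j_i})_{i\in J}$ are linearly independent, in which case the assumed dependence yields a unique expansion $\tilde v_{s_k,l_k,j_k}=\sum_{i\in J}\alpha_i\tilde v_{s_i,l_i,j_i}$. Any $x$ in the full intersection must then satisfy the scalar equation $\beta_k=\sum_{i\in J}\alpha_i\beta_i$, which — since $b_{l_k,j_k}$ appears with effective coefficient $1$ — pins $b_{l_k,j_k}$ to a singleton, a conditional null event by continuity of the bias distribution. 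Integrating over the weights preserves nullness.

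Since $\mathcal{S}$ and $\mathcal{I}$ are finite, for each $n$ only finitely many tuples arise; the countable union over $n\in\mathbb{N}$ of the resulting null events is again null, which yields the almost-sure statement holding simultaneously for all admissible configurations.

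The main obstacle is the degenerate case where several indices share the pair $(l_i,j_i)=(l_k,j_k)$ but differ in activation patterns, so that $b_{l_k,j_k}$ enters several $\beta_i$ with coefficient $1$ and its net coefficient in $\beta_k-\sum_{i\in J}\alpha_i\beta_i$ becomes the weight-measurable scalar $1-\sum_{i\in J,(l_i,j_i)=(l_k,j_k)}\alpha_i$, which may vanish. The remedy is to treat the full map from the bias vector to $(\beta_i)_i$ as a weight-determined affine map and argue, via Fubini with a layer-by-layer conditioning order, that either the scalar constraint $\beta_k-\sum_{i\in J}\alpha_i\beta_i=0$ depends non-trivially on some bias coordinate — giving a null preimage by continuity — or it is bias-independent, in which case the weight-level identity it produces makes $H^{s_k}_{l_k,j_k}$ and $\bigcap_{i\in J}H^{s_i}_{l_i,j_i}$ either coincide as hyperplanes (so we may discard the redundant tuple and apply the induction with one fewer element) or lie in parallel but offset affine flats (so the intersection is empty). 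In either branch the bad event is null, and the induction closes.
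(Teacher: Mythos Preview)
Your main argument mirrors the paper's exactly: induction on $n$, condition on the weights so the normal vectors are fixed, pick $k$ with $l_k$ maximal, apply the inductive hypothesis to $J=\{1,\dots,n^*+1\}\setminus\{k\}$, and then use the continuity of $b_{l_k,j_k}$ to kill the remaining scalar constraint. For tuples with pairwise distinct $(l_i,j_i)$ --- which is all the application to Theorem~\ref{thm:regularNetwork} needs --- both your proof and the paper's are fine.

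You go further than the paper by flagging the degenerate case where several tuples share the same $(l_i,j_i)=(l_k,j_k)$ with different activation patterns, so that $b_{l_k,j_k}$ enters multiple $\beta_i$ and its net coefficient $1-\sum_{i\in J,\,(l_i,j_i)=(l_k,j_k)}\alpha_i$ may vanish. The paper's proof silently assumes this does not occur (it conditions on ``$b_{l_i,j_i}$, $i\in J$'' and then asserts $b_{l_k,j_k}$ still has a continuous distribution), so it shares the gap you spotted. However, your proposed remedy does not close it. In the branch where $\beta_k-\sum_{i\in J}\alpha_i\beta_i$ is bias-independent and hence identically zero, you obtain $\bigcap_{i\in J}H^{s_i}_{l_i,j_i}\subset H^{s_k}_{l_k,j_k}$ and then ``discard the redundant tuple and apply the induction with one fewer element''. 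But the induction on $J$ only yields that the $n^*$ vectors $(\tilde v_{s_i,l_i,j_i})_{i\in J}$ are independent when $\bigcap_{i\in J}H^{s_i}_{l_i,j_i}\neq\emptyset$; it says nothing about the full family of $n^*+1$ vectors, which is dependent by construction, while the full intersection equals the nonempty $J$-intersection. So in this branch neither disjunct of the conclusion holds and the induction does not close.

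In fact the lemma as literally stated can fail in this degenerate situation: with deterministic weights chosen so that some column of $W_2$ vanishes, two second-layer tuples with the same $(l,j)$ and activation patterns differing only in that column produce identical nonzero normal vectors and identical hyperplanes, violating both disjuncts with probability one. The clean fix is to add the hypothesis that the $(l_i,j_i)$ are pairwise distinct --- which is exactly what the application requires --- and then your (and the paper's) main argument goes through without needing the extra branch at all.
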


\begin{proof}[Proof of Theorem~\ref{thm:regularNetwork}]
	The claim is just a reformulation of Lemma~\ref{lem:randomHyperplanes}.
\end{proof}

\subsection{Feasible directions}
\label{app:feasibleDirections}
\begin{lemma}
\label{lem:compatibleReluArgumetns}
For $x\in\mathbb{R}^{n_0}$ it holds that
$s\in \mathcal{S}^C(x)$ if and only if $\tilde A_s(x)_{lj}(s_{lj}-0.5)\ge 0$ for all $(l,j)\in\mathcal{I}$.
\begin{proof}
This follows from equations~\eqref{eq:compatible} and \eqref{eq:subjectiveHPattern}.
\end{proof}
\end{lemma}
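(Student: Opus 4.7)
The plan is to unpack the definitions carefully and reduce the claim to Theorem~\ref{thm:argumentRepresentation}. The key preliminary observation is that for any real number $a$ and any $r\in\{0,1\}$, the inequality $a(r-0.5)\ge 0$ is equivalent to $\operatorname{sign}(a)(r-0.5)\ge 0$: if $a=0$ both sides vanish, and if $a\neq 0$ dividing by $|a|>0$ preserves the sign. Applied entrywise, the condition $\tilde A_s(x)_{lj}(s_{lj}-0.5)\ge 0$ for all $(l,j)\in\mathcal{I}$ is therefore equivalent to $\tilde H_s(x)_{lj}(s_{lj}-0.5)\ge 0$ for all $(l,j)\in\mathcal{I}$, which by equation~\eqref{eq:compatible} is precisely the statement that $s$ is compatible with $\tilde H_s(x)$, i.e.\ $s\in\tilde{\mathcal{S}}^C(x)$ as defined in~\eqref{def:representingSignaturesFixed}.

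From here, I would invoke the first part of Theorem~\ref{thm:argumentRepresentation}, which asserts $\mathcal{S}^C(x)=\tilde{\mathcal{S}}^C(x)$. This immediately closes the gap: the condition in the lemma is equivalent to $s\in\tilde{\mathcal{S}}^C(x)$, which in turn is equivalent to $s\in\mathcal{S}^C(x)$. So the whole proof is essentially a two-line bookkeeping argument once the sign equivalence is noted.

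If one prefers a direct two-direction proof rather than going through $\tilde{\mathcal{S}}^C(x)$, I would argue as follows. For the forward direction, assume $s\in\mathcal{S}^C(x)$; then part~2 of Theorem~\ref{thm:argumentRepresentation} gives $A(x)=\tilde A_s(x)$, and the compatibility condition $H(x)_{lj}(s_{lj}-0.5)\ge 0$ from~\eqref{eq:compatible} together with the sign equivalence above yields exactly $\tilde A_s(x)_{lj}(s_{lj}-0.5)\ge 0$. For the converse, the sign equivalence turns the hypothesis into $s\in\tilde{\mathcal{S}}^C(x)$, and Theorem~\ref{thm:argumentRepresentation} again gives $s\in\mathcal{S}^C(x)$.

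There is no real obstacle here; the only mild subtlety is remembering that $s_{lj}-0.5\in\{-0.5,0.5\}$ never vanishes, so the sign rewriting $a(s_{lj}-0.5)\ge 0 \Leftrightarrow \operatorname{sign}(a)(s_{lj}-0.5)\ge 0$ is an unconditional equivalence. The whole content of the lemma lies in Theorem~\ref{thm:argumentRepresentation}; what this auxiliary lemma adds is merely the convenience of expressing compatibility directly in terms of the affine quantities $\tilde A_s(x)_{lj}$ rather than their signs, which is the form needed in the proof of Lemma~\ref{lem:feasibleFormulation} in Section~\ref{app:feasibleDirections}.
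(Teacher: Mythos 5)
Your proof is correct and follows essentially the same route as the paper: the sign equivalence turns the stated condition into compatibility of $s$ with $\tilde H_s(x)$ via equations~\eqref{eq:compatible} and \eqref{eq:subjectiveHPattern}, and Theorem~\ref{thm:argumentRepresentation} identifies $\tilde{\mathcal{S}}^C(x)$ with $\mathcal{S}^C(x)$. In fact you are slightly more careful than the paper's one-line proof, which cites only the two defining equations and leaves the (necessary) appeal to Theorem~\ref{thm:argumentRepresentation} implicit.
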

\begin{proof}[Proof of Lemma~\ref{lem:feasibleFormulation}]
Let $v^*\in\mathbb{R}^{n_0}$ with $\tilde A_s(x+v^*)_{lj}(s_{lj}-0.5)\ge 0$ for all $\tilde C_s(x)$. Choose $\varepsilon^*>0$ small enough such that for indices $(l,j)\in\mathcal{I}\setminus \tilde C_s(x)$, the subjective hyperplane pattern is constant, i.e. such that for all $v\in B_{\varepsilon^*}(0)$ it holds that $\tilde H_s(x+v)_{lj}=\tilde H_s(x)_{lj}$. This is possible by equation~\eqref{eq:subjectiveC}. Now let $\varepsilon>0$ such that $\varepsilon v^*\in B_{\varepsilon^*}(0)$. In particular at indices $(l,j)\in\mathcal{I}\setminus \tilde C_s(x)$ $\tilde H_s(x+\varepsilon v^*)_{lj}=\tilde H_s(x)_{lj}$ is compatible with $s$. For the other indices $(l,j)\in \tilde C_s(x)$ it holds by assumption and Lemma~\ref{lem:tildeV} that 
\begin{equation*}
\tilde A_s(x+\varepsilon v^*)_{lj}(s_{lj}-0.5)=\varepsilon\langle v^*,\tilde v_{s,l,j}\rangle (s_{lj}-0.5)=\varepsilon\tilde A_s(x+ v^*)_{lj}(s_{lj}-0.5)\ge 0.
\end{equation*}
Using Lemma~\ref{lem:compatibleReluArgumetns} it follows that $\tilde H_s(x+\varepsilon v^*)$ is compatible with $s$.  
To prove the other direction let $v\in\mathbb{R}^{n_0}$ such that for some $(l,j)\in\tilde C_s(x+v)$ the condition $A_s(v)_{lj}(s_{lj}-0.5)\ge 0$ does \emph{not} hold. Similar to above it follows that $H(x+\varepsilon v)$ cannot be compatible with $s$ for any $\varepsilon>0$.
\end{proof}
\begin{corollary}
\label{cor:feasibleCompatible}
For an input $x\in\mathbb\mathbb{R}^{n_0}$ and a compatible activation pattern $s\in\mathcal{S}^{C}(x)$ it holds that all directions $v\in\mathbb{R}^{n_0}$ such that $s\in\mathcal{S}^C(x+v)$ satisfy $v\in\tilde D_s(x)$.
\begin{proof}[Proof of Corollary~\ref{cor:feasibleCompatible}]
This follows immediately from Lemmas~\ref{lem:compatibleReluArgumetns} and \ref{lem:feasibleFormulation}.
\end{proof}
\end{corollary}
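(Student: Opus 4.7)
The plan is to compose the two immediately preceding results: Lemma~\ref{lem:compatibleReluArgumetns}, which characterizes compatibility of $s$ with a point pointwise via the sign conditions $\tilde A_s(y)_{lj}(s_{lj}-0.5)\ge 0$ on \emph{every} neuron, and Lemma~\ref{lem:feasibleFormulation}, which recasts $\tilde D_s(x)$ as those directions $v$ for which the same sign condition holds at $x+v$, but only on the critical indices $\tilde C_s(x)\subseteq \mathcal{I}$.

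First, I would invoke Lemma~\ref{lem:compatibleReluArgumetns} on the hypothesis $s\in\mathcal{S}^C(x+v)$ to deduce
\begin{equation*}
\tilde A_s(x+v)_{lj}(s_{lj}-0.5)\ge 0\quad\text{for every }(l,j)\in\mathcal{I}.
\end{equation*}
Restricting this collection of inequalities to the sub-collection $(l,j)\in \tilde C_s(x)$ gives exactly the membership condition on the right-hand side of equation~\eqref{eq:feasibleAlternativeDef}. Since the remaining hypothesis $s\in\mathcal{S}^C(x)$ is precisely what is needed to apply Lemma~\ref{lem:feasibleFormulation}, the latter yields $v\in \tilde D_s(x)$, which is the claim.

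There is no real obstacle here; the corollary is essentially a bookkeeping consequence of chaining the two lemmas. The content worth noting, however, is conceptual: while $\tilde D_s(x)$ was originally defined via the weaker existential requirement that $s\in\mathcal{S}^C(x+\varepsilon v)$ for some (possibly tiny) $\varepsilon>0$, the argument shows that full compatibility at the endpoint $x+v$ already suffices. This works because compatibility on all of $\mathcal{I}$ is automatically stronger than compatibility on the subset $\tilde C_s(x)$, which alone governs local feasibility via the affine characterization in Lemma~\ref{lem:feasibleFormulation}.
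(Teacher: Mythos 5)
Your proposal is correct and is precisely the argument the paper intends: its proof consists only of the sentence ``this follows immediately from Lemmas~\ref{lem:compatibleReluArgumetns} and \ref{lem:feasibleFormulation},'' and your write-up is the natural expansion of that chain (apply Lemma~\ref{lem:compatibleReluArgumetns} at the point $x+v$, restrict the resulting inequalities to $\tilde C_s(x)$, and conclude via the characterization \eqref{eq:feasibleAlternativeDef} in Lemma~\ref{lem:feasibleFormulation}). No gaps; the approach matches the paper's.
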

The above corollary gives a sufficient criterion for directions being feasible directions in terms of compatibility.
\begin{proof}[Proof of Theorem~\ref{thm:identicalAxes}]
	For $r:=s_{lj}=s'_{lj}$, let
\begin{equation*}
\mathcal{W}_s:=\left\{v\in\mathbb{R}^{n_0}\mid \forall (l,j')\in \mathcal{I}\setminus \left\{ (l,j) \right\}\;\tilde H_s(x+v)_{l'j'}=\tilde H_s(x)_{l'j'} \land  (r-0.5)\tilde A_s(x+v)_{lj}>0 \right\}
\end{equation*} and define $\mathcal{W}_{s'}$ similarly. Corollary \ref{cor:sameHyperplanePatternSet} implies $\mathcal{W}_s=\mathcal{W}_{\tilde s}$.
By equation~\eqref{eq:subjectiveC} and Theorem~\ref{thm:regularity} it is possible to find $\varepsilon_s,\varepsilon_{s'}>0$ such that for $y_s\in B_{\varepsilon_s}(x)$, $y_{s'}\in B_{\varepsilon_{s'}}(x)$ the hyperplane patterns $\tilde H_s(y_s)_{lj}$, $\tilde H_{s'}(y_{s'})_{lj}$ are constant at indices $(l,j)\not\in C(x)=\tilde C_{s}(x)=\tilde C_{s'}(x)$.
Since $B_{\min(\varepsilon_s,\varepsilon_{s'})}(x)$ contains an open ball around $x$, it follows that
\begin{align*}
	&\; \left\{v\in\mathbb{R}^{n_0}\mid \forall (l,j')\in \tilde C_s(x)\setminus \left\{ (l,j) \right\}\;\tilde H_s(x+v)_{l'j'}=\tilde H_s(x)_{l'j'} \land  (r-0.5)\tilde A_s(x+v)_{lj}>0\right\}=\\
	&\; \left\{v\in\mathbb{R}^{n_0}\mid \forall (l,j')\in \tilde C_{s'}(x)\setminus \left\{ (l,j) \right\}\;\tilde H_{s'}(x+v)_{l'j'}=\tilde H_{s'}(x)_{l'j'} \land  (r-0.5)\tilde A_{s'}(x+v)_{lj}>0\right\}.
\end{align*} 
Now note that by equation~\eqref{eq:axisCondition}, $\tilde a_{s,x,l,j}$ is an element of the first of the above sets and by equivalence also of the second. In particular, for $\varepsilon^*>0$ small enough $x+\varepsilon^*\tilde a_{s,x,l,j}\in \mathcal{W}_s=\mathcal{W}_{\tilde s}$ such that $s,s'\in \mathcal{S}^C(x+\varepsilon^*\tilde a_{s,x,l,j})$. Now Lemma~\ref{lem:tildeV} and Theorem~\ref{thm:argumentRepresentation} imply
\begin{align*}
	\varepsilon^*\langle \tilde a_{s,x,l,j},\tilde v_{s',l',j'}\rangle&= \tilde A_{s'}(x+\varepsilon^* a_{s,x,l,j})_{l'j'} = A(x+\varepsilon^* \tilde a_{s,x,l,j})_{l'j'} = \tilde A_{s}(x+\varepsilon^* \tilde a_{s,x,l,j})_{l'j'}\\&= \varepsilon^*\langle \tilde a_{s,x,l,j},\tilde v_{s,l',j'}\rangle=\varepsilon^*\delta_{(l,j),(l',j')}.
\end{align*}
for all critical indices $(l',j')\in C(x)$.
By equation~\eqref{eq:axisCondition} this means 
$\tilde a_{s,x,l,j}=\tilde a_{s',x,l,j}$.
\end{proof}
\subsection{Regular vertices and local minima}
\label{app:regularVerticesLocalMinima}
\begin{proof}[Proof of Lemma~\ref{lem:minimumVertex}]
	If a strict local minimum $x\in\mathbb{R}^{n_0}$ is not a vertex then there exists a non-zero $v\in\textnormal{Ker}^C(x)$. But by equation~\eqref{eq:hyperKernel} cannot be a strict local minimum. For the second claim let $x^*$ be a non-strict local minimum of $f$, i.e. there exists $\varepsilon^*>0$ such that $f(x^*)=\inf\big\{f(x)\mid\; x\in B_{\varepsilon}(x^*)  \big\}$. If $\textnormal{df}^C(x^*)>0$ then choose a non-zero $v\in \textnormal{Ker}^C(x^*)$. The map $t\mapsto f(x^*+tv)$ is an affine function for $t\in \left\{ t\in\mathbb{R}\mid\; H(x^*+tc)=H(x^*) \right\}$. By equation~\eqref{eq:hyperKernel} there exists $\varepsilon>0$ such that $(-\varepsilon,\varepsilon)\subset \left\{ t\in\mathbb{R}\mid\; H(x^*+tv)=H(x^*) \right\}$. This implies that, the function $\mathbb{R}\to\mathbb{R}, t\mapsto f(x^*+tv)$ is differentiable at $t=0$ with derivative $0$. Then for $t^*=\min\left\{ t>0\middle\vert\; H(x^*+tv)\neq H(x^*) \right\}$ it holds that $f(x^*)=f(x^*+tv)$ and $\textnormal{df}^C(x^*+tv)<\textnormal{df}^C(x^*)$. Repetition of this procedure leads to a point $x^{**}$ with $\textnormal{df}^C(x^{**})=0$, i.e. a vertex with $f(x^{**})=f(x^*)$ after a finite number of steps.
\end{proof}
\begin{lemma}
\label{lem:oneSided}
For $x\in\mathbb{R}^{n_0}$ and $v\in\mathbb{R}^{n_0}\setminus\left\{ 0 \right\}$ the limit
$\lim_{t\searrow 0}\frac{1}{t}( f(x+tv)-f(x) )$
exists. Furthermore assume that $\varepsilon^*>0$ such that for all $(l'j')\in \mathcal{I}\setminus C(x)$, $H(y)_{lj}$ is constant for $y\in B_{\varepsilon^*}(x)$ and let $t^*>0$ small enough such that $x+t^*v\in B_{\varepsilon^*}$. For $s\in \mathcal{S}^{C}(x+t^*v)$ it holds that
\begin{equation*}
\lim_{t\searrow 0}\frac{1}{t}\left( f(x+tv)-f(x) \right)= \lim_{t\searrow 0}\frac{1}{t}\left( \tilde f_s(x+tv)-\tilde f_s(x) \right)=\langle v ,\nabla_s\rangle
\end{equation*}
\begin{proof}
Note that $H(x)_{lj}=H(x+t^*v)_{lj}$ for $(l,j)\in \mathcal{I}\setminus C(x)$ and $H(x)_{lj}=0$ for $(l,j)\in C(x)$. Hence $s$ is compatible with $H(x)$ and by assumption $s$ is compatible with $H(x+t^*v)$. By Theorem~\ref{thm:argumentRepresentation} it follows that $s$ is compatible with both $\tilde H_s(x)$ and $\tilde H_s(x+t^*v)$. By equation~\eqref{eq:subjectiveHPattern} and the fact that all coordinates of $y\mapsto \tilde A_s(y)$ are affine functions it follows that $s$ is compatible with all $\tilde H_s(x+tv)$ for $t\in[0,t^*]$ and hence again by Theorem~\ref{thm:argumentRepresentation} with all $H(x+tv)$ for $t\in[0,t^*]$. Applying the same theorem again yields $A(x+tv)=\tilde A_s(x+tv)$ and hence $f(x+tv)=\tilde f_s(x+tv)$ for all $t\in[0,t^*]$. The result now follows from the fact that $\tilde f_s$ is affine with gradient exactly $\nabla_s$ as in equation~\eqref{eq:gradient}.
\end{proof}
\end{lemma}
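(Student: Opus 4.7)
The plan is to prove the lemma by exhibiting an activation pattern $s$ that is compatible with every point on the initial segment $\{x+tv : t\in[0,t^*]\}$, so that $f$ and $\tilde f_s$ coincide on this segment; once that is done the one-sided derivative claims are trivial because $\tilde f_s$ is affine with gradient $\nabla_s$.

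First I would verify that the hypothesized $\varepsilon^*$ actually exists (this is needed to establish the unconditional part of the statement). By equation~\eqref{eq:C}, for each $(l',j')\in\mathcal{I}\setminus C(x)$ there is some radius $\varepsilon_{l'j'}>0$ on which $H(\cdot)_{l'j'}$ is constant. Because $\mathcal{I}\setminus C(x)$ is finite, taking the minimum over these radii produces the desired common $\varepsilon^*$. Given this $\varepsilon^*$, for any $v$ a sufficiently small $t^*>0$ places $x+t^*v$ in $B_{\varepsilon^*}(x)$, and $\mathcal{S}^C(x+t^*v)$ is always non-empty (one may always choose $s_{lj}=\operatorname{ReLU}(H(x+t^*v)_{lj})$), so the second, conditional statement of the lemma can indeed be invoked to prove the first.

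Next, fix $s\in\mathcal{S}^C(x+t^*v)$ as in the lemma and verify $s\in\mathcal{S}^C(x)$. For indices $(l',j')\notin C(x)$, constancy on $B_{\varepsilon^*}(x)$ gives $H(x)_{l'j'}=H(x+t^*v)_{l'j'}$, so $s$ inherits compatibility with $H(x)$ at these coordinates from its compatibility at $x+t^*v$. For indices $(l,j)\in C(x)$, Lemma~\ref{lem:zero} forces $H(x)_{lj}=0$, which is compatible with any $s_{lj}\in\{0,1\}$ by equation~\eqref{eq:compatible}. Hence $s\in\mathcal{S}^C(x)$, and Theorem~\ref{thm:argumentRepresentation} yields $A(x)=\tilde A_s(x)$ and $A(x+t^*v)=\tilde A_s(x+t^*v)$.

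The central technical step is extending this equality to the whole segment. By equation~\eqref{eq:explFormulation} each map $y\mapsto \tilde A_s(y)_{lj}$ is affine, so for $t\in[0,t^*]$ one has
\begin{equation*}
\tilde A_s(x+tv)_{lj}\;=\;\left(1-\tfrac{t}{t^*}\right)\tilde A_s(x)_{lj}+\tfrac{t}{t^*}\,\tilde A_s(x+t^*v)_{lj}.
\end{equation*}
Multiplying by $(s_{lj}-0.5)$ and using that both endpoint values of $\tilde A_s(\cdot)_{lj}(s_{lj}-0.5)$ are non-negative (since $s\in\mathcal{S}^C(x)\cap\mathcal{S}^C(x+t^*v)$ and, via Theorem~\ref{thm:argumentRepresentation}, $s\in\tilde{\mathcal{S}}^C$ at both endpoints), the convex combination stays non-negative. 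Lemma~\ref{lem:compatibleReluArgumetns} then gives $s\in\tilde{\mathcal{S}}^C(x+tv)=\mathcal{S}^C(x+tv)$ for all $t\in[0,t^*]$. Applying Theorem~\ref{thm:argumentRepresentation} and Corollary~\ref{cor:representation} pointwise on the segment produces $f(x+tv)=\tilde f_s(x+tv)$ for $t\in[0,t^*]$.

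Finally, since $\tilde f_s$ is affine with gradient $\nabla_s$ by equation~\eqref{eq:gradient}, one obtains directly
\begin{equation*}
\lim_{t\searrow 0}\frac{f(x+tv)-f(x)}{t}=\lim_{t\searrow 0}\frac{\tilde f_s(x+tv)-\tilde f_s(x)}{t}=\langle v,\nabla_s\rangle,
\end{equation*}
which establishes both the existence of the one-sided derivative and the two identified formulas. The main obstacle in this plan is the convex-combination compatibility argument: one must be careful that compatibility at both endpoints transfers to every interior point coordinatewise, which relies critically on the linearity of $y\mapsto\tilde A_s(y)_{lj}$ together with the compatibility characterization via Lemma~\ref{lem:compatibleReluArgumetns}.
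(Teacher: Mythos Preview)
Your proposal is correct and follows essentially the same approach as the paper: establish $s\in\mathcal{S}^C(x)$ from the constancy hypothesis and Lemma~\ref{lem:zero}, use affinity of $\tilde A_s$ to interpolate compatibility along the segment $[0,t^*]$, and conclude via Corollary~\ref{cor:representation} and the affinity of $\tilde f_s$. Your write-up is in fact slightly more careful than the paper's, since you explicitly justify the existence of $\varepsilon^*$ (needed for the unconditional existence claim) and spell out the convex-combination step with Lemma~\ref{lem:compatibleReluArgumetns}, whereas the paper treats these points more tersely.
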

\begin{corollary}
\label{cor:localexpansion}
Let $x\in\mathbb{R}^{n_0}$ be a regular vertex and $\varepsilon^*>0$ such that for all $(l'j')\in \mathcal{I}\setminus C(x)$, $ H(y)$ is constant for $y\in B_{\varepsilon^*}(x)$. For every $v\in B_{\varepsilon^*}(0)$ there exist non-negative coefficients $\alpha_1,\ldots,\alpha_{n_0}\in [0,\infty)$ and $n_0$ region separating axes $a_1,\ldots,a_{n_0}\in \mathcal{A}(x)$ such that
\begin{equation*}
	f(x+v)= f(x)+\sum_{i=1}^{n_0}\alpha_{i}\lim_{t\searrow 0}\frac{1}{t}\left( f(x+ta_i)-f(x) \right).
\end{equation*}
\begin{proof}
	Let $v\in B_{\varepsilon^*}(0)$ and $s\in \mathcal{S}^C(x)$ such that $s$ is compatible with $H(x+v)$. Then $v\in \tilde D_s(x)$ by Corollary~\ref{cor:feasibleCompatible}. Lemma~\ref{lem:positiveSpan} and equation~\eqref{eq:feasibleRegionseparating} imply that there exist coefficients $\alpha_{lj}\in[0,\infty)$, $(l,j)\in C(x)$ such that
\begin{equation}
\label{eq:proofDirectionConvex}
v=\sum_{(l,j)\in \tilde C_s(x)}\alpha_{lj} \tilde a_{x,s,l,j}=\sum_{(l,j)\in \tilde C_s(x)}\alpha_{lj} a_{x,s_{lj},l,j}
\end{equation}This implies that
\begin{align*}
	f(x+v)&\overset{(1)}{=} \tilde f_s(x+v)\overset{(2)}{=}\tilde f_s(x+\sum_{(l,j)\in \tilde C_s(x)}\alpha_{lj} a_{x,s_{lj},l,j})
	\overset{(3)}{=}\tilde f_s(x)+\langle \sum_{(l,j)\in \tilde C_s(x)}\alpha_{lj} a_{x,s_{lj},l,j},\nabla_s\rangle\\
	&\overset{}{=} f(x)+\sum_{(l,j)\in \tilde C_s(x)}\alpha_{lj}\langle  a_{x,s_{lj},l,j},\nabla_s\rangle 
	\overset{(4)}{=} f(x)+\sum_{(l,j)\in \tilde C_s(x)}\alpha_{lj}\left( \tilde f_s(x+a_{x,s_{lj,l,j}})-\tilde f_s(x) \right) \\
	&\overset{(5)}{=} f(x)+\sum_{(l,j)\in \tilde C_s(x)}\alpha_{lj}\lim_{t\searrow 0}\frac{1}{t}\left( \tilde f_s(x+ta_{x,s_{lj},l,j})-\tilde f_s(x) \right) \\
	&\overset{(6)}{=} f(x)+\sum_{(l,j)\in \tilde C_s(x)}\alpha_{lj}\lim_{t\searrow 0}\frac{1}{t}\left( f(x+ta_{x,s_{lj},l,j})-f(x) \right) .
\end{align*}
The justification of the steps is as follows: (1) follows from Corollary~\ref{cor:representation}, (2) from equation~\eqref{eq:proofDirectionConvex}, (3), (4) and (5) from the fact that the functions $f_{s'}$, $s'\in\mathcal{S}$ are affine and (6) is an implication of Lemma~\ref{lem:oneSided}. Furthermore $|\tilde C_s(x)|=n_0$ by the regularity assumption.
\end{proof}
\end{corollary}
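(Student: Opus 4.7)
The plan is to produce a single compatible activation pattern $s\in\mathcal{S}^C(x)$ that simultaneously certifies both $x$ and $x+v$ as living in the same affine region of the frozen network $\tilde f_s$, decompose $v$ as a non-negative combination of the feasible axes of $\tilde f_s$ at $x$, and then rewrite those feasible axes as region separating axes via equation~\eqref{eq:feasibleRegionseparating}.

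Concretely, I would begin by choosing any $s$ compatible with $H(x+v)$. The point of the hypothesis on $\varepsilon^*$ is that on $B_{\varepsilon^*}(x)$ only the critical coordinates of $H$ can move, and by Lemma~\ref{lem:zero} those coordinates vanish at $x$; hence the same $s$ is automatically compatible with $H(x)$ too, so $s\in\mathcal{S}^C(x)\cap\mathcal{S}^C(x+v)$. Corollary~\ref{cor:feasibleCompatible} then places $v\in\tilde D_s(x)$, and since $x$ is a regular vertex, Lemma~\ref{lem:positiveSpan} expresses $v=\sum_{(l,j)\in C(x)}\alpha_{lj}\,\tilde a_{x,s,l,j}$ with $\alpha_{lj}\ge 0$. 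Equation~\eqref{eq:feasibleRegionseparating} then identifies each feasible axis $\tilde a_{x,s,l,j}$ with the region separating axis $a_{x,s_{lj},l,j}\in\mathcal{A}(x)$, and the count $|C(x)|=n_0$ follows from the regular vertex hypothesis $\textnormal{df}^C(x)=0$ together with Lemma~\ref{lem:dfge}, yielding exactly $n_0$ axes.

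Next I would invoke Corollary~\ref{cor:representation} on both arguments (using that $s$ is compatible with both $x$ and $x+v$) to replace $f(x+v)-f(x)$ by $\tilde f_s(x+v)-\tilde f_s(x)$. Because $\tilde f_s$ is affine with gradient $\nabla_s$, substituting the positive decomposition of $v$ yields
\begin{equation*}
f(x+v)=f(x)+\sum_{(l,j)\in C(x)}\alpha_{lj}\,\langle a_{x,s_{lj},l,j},\nabla_s\rangle.
\end{equation*}
To recover the asserted one-sided derivatives I would apply Lemma~\ref{lem:oneSided} along each individual axis: for $t>0$ small enough, $x+t\,a_{x,s_{lj},l,j}$ remains in a region where the same $s$ is compatible, so the lemma rewrites the inner product as $\lim_{t\searrow 0}\tfrac{1}{t}\bigl(f(x+t\,a_{x,s_{lj},l,j})-f(x)\bigr)$, completing the identity.

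The one subtle point, and the main place to be careful, is that a single $s$ must do three jobs at once: certify $f$ at $x+v$ via Corollary~\ref{cor:representation}, provide a positive decomposition of $v$ in feasible axes at $x$ via Lemma~\ref{lem:positiveSpan}, and have $\tilde C_s(x)=C(x)$ so that the decomposition has exactly $n_0$ terms and the identification with $\mathcal{A}(x)$ is unambiguous. The first two fall out of the compatibility argument powered by the choice of $\varepsilon^*$ and Lemma~\ref{lem:zero}; the third is precisely what Theorem~\ref{thm:regularity} (stability of critical indices at regular points) contributes. Everything else is algebra that uses only affinity of $\tilde f_s$, so no further obstacles arise.
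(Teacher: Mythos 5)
Your proposal is correct and follows essentially the same route as the paper's proof: the same choice of $s$ compatible with $H(x+v)$, the same use of Corollary~\ref{cor:feasibleCompatible}, Lemma~\ref{lem:positiveSpan} and equation~\eqref{eq:feasibleRegionseparating} to decompose $v$ into region separating axes, and the same chain through Corollary~\ref{cor:representation}, affinity of $\tilde f_s$, and Lemma~\ref{lem:oneSided}. You even make explicit (via the $\varepsilon^*$ hypothesis and Lemma~\ref{lem:zero}) why such an $s$ is automatically in $\mathcal{S}^C(x)$, a step the paper leaves implicit.
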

\begin{proof}[Proof of Proposition~\ref{prop:localMinimum}]
The claim follows from Lemmas~\ref{lem:oneSided} and Corollary~\ref{cor:localexpansion}.
\end{proof}
\section{Bounds on the number of affine regions}

\label{sec:NumReg}
In the sequel, let $N(n_0,\dots,n_L)$ denote the maximal number of affine regions that a ReLU feed-forward neural network of depth $L$ and widths $n_0,\dots,n_L$ can have. We will now present some bounds on $N(n_0,\dots,n_L)$.

The work \cite{Montufar:2014:NLR:2969033.2969153} establishes the following bound on the number of affine regions:
\begin{lemma}[Mont\'ufar]
Let $L\in\mathbb{N}$. A neural network with $L+2$ layers ($L$ hidden layers) of widths $n_0,\dots,n_{L+1}$ is a piece-wise affine function with at most 

\begin{equation}
\label{eq:Montufar}
N(n_0,\dots,n_L)\le \prod_{i=1}^L\sum_{j=0}^{\min(n_0,\dots,n_i)}{n_i\choose j}
\end{equation}
affine regions.
\label{lem:counting}
\end{lemma}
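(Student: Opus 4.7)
The plan is an inductive, layer-by-layer argument built on Zaslavsky's classical bound: $n$ affine hyperplanes cut $\mathbb{R}^d$ into at most $r(n,d):=\sum_{j=0}^{d}\binom{n}{j}$ full-dimensional regions. I would apply this bound once per layer, but the crucial point is that the ambient dimension fed into Zaslavsky at layer $i$ is not $n_{i-1}$ but the effective rank of the piece-wise affine map computed by the preceding layers.

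First I would fix an activation pattern $s\in\mathcal{S}$ and recall from equation~\eqref{eq:v} that the subjective map $\tilde g_{s_i}^{(i)}\circ\cdots\circ \tilde g_{s_1}^{(1)}$ is affine of rank at most $d_i:=\min(n_0,n_1,\ldots,n_i)$, since each $\tilde g_{s_l}^{(l)}$ has rank at most $\min(n_{l-1},n_l)$ and the rank of a composition is bounded by the minimum of the ranks. By Corollary~\ref{cor:representation} the true network agrees with $\tilde f_s$ on every region where $s$ is compatible, so on each maximal affine region $R\subset\mathbb{R}^{n_0}$ of the partial network $g^{(i-1)}\circ\cdots\circ g^{(1)}$ the image lies in an affine subspace of $\mathbb{R}^{n_{i-1}}$ of dimension at most $d_{i-1}$.

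Next I would count the sub-partition of $R$ induced by layer $i$. The $n_i$ new neurons define $n_i$ hyperplanes in $\mathbb{R}^{n_{i-1}}$ through the rows of $W_i$; pulled back by the affine map acting on $R$, these become at most $n_i$ affine conditions. Because that affine map's image has dimension at most $d_{i-1}$, the induced arrangement inside $R$ is effectively an arrangement of $n_i$ hyperplanes in a $d_{i-1}$-dimensional affine space. Zaslavsky's bound then gives at most $r(n_i,d_{i-1})=\sum_{j=0}^{d_{i-1}}\binom{n_i}{j}$ pieces, and since $\binom{n_i}{j}=0$ for $j>n_i$ this equals $\sum_{j=0}^{\min(n_0,\ldots,n_i)}\binom{n_i}{j}$, matching the factor in~\eqref{eq:Montufar}.

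Finally I would iterate: starting from the single region $R=\mathbb{R}^{n_0}$ before any layer is applied, each layer multiplies the current number of regions by at most $r(n_i,d_{i-1})$, so after all $L$ hidden layers the total count is bounded by $\prod_{i=1}^{L} \sum_{j=0}^{\min(n_0,\ldots,n_i)} \binom{n_i}{j}$, which is the claimed bound. The main technical obstacle, in my view, is the dimension-reduction step: one must argue rigorously that when the image of $R$ sits in a proper affine subspace of $\mathbb{R}^{n_{i-1}}$, the layer-$i$ hyperplanes restricted to this image genuinely live in an ambient space of dimension $d_{i-1}$ (and not $n_{i-1}$), otherwise one only obtains the weaker bound $\sum_{j=0}^{n_{i-1}}\binom{n_i}{j}$ per layer. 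A careful treatment has to handle potential degeneracies where several pulled-back hyperplanes coincide or become trivial on $R$; these only decrease the count, so they do not obstruct the upper bound but do require an explicit check.
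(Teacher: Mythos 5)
Your sketch is correct and follows essentially the same route as the source the paper cites: the paper itself does not reprove Lemma~\ref{lem:counting} but attributes it to Mont\'ufar et al.\ and points to Buck's hyperplane-counting bound (Lemma~\ref{lem:zaslavsky}) as the key ingredient, which is exactly the layer-wise induction with the rank reduction $d_{i-1}=\min(n_0,\dots,n_{i-1})$ that you describe. The degeneracy and dimension-reduction caveats you flag are genuine but, as you note, only decrease the count, so no new idea is needed beyond what you have written.
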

It uses a result from 1943 by R.C.Buck, which bounds the number of regions that are carved out of $\mathbb{R}^{n_0}$ by $n_1$ hyperplanes, see~\cite{buck1943partition}:
\begin{lemma}[Buck]
In $\mathbb{R}^d$, $N \in\mathbb{N}$ hyperplanes cut the space $\mathbb{R}^d$ into at most $\sum_{j=0}^{d}{N\choose j}$ regions.
\label{lem:zaslavsky}
\end{lemma}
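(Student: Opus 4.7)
The plan is to prove the bound by double induction on $N$ and the ambient dimension $d$. Writing $r(N,d)$ for the maximum number of regions that $N$ hyperplanes can cut $\mathbb{R}^d$ into, the target becomes $r(N,d) \le \sum_{j=0}^{d}\binom{N}{j}$. The base cases are immediate: $r(0,d)=1$ for every $d\ge 1$, matching $\binom{0}{0}=1$; and $r(N,1)=N+1$, since $N$ points on a line yield at most $N+1$ intervals, which equals $\binom{N}{0}+\binom{N}{1}$.

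For the inductive step, I would fix an arrangement of $N$ (without loss of generality distinct) hyperplanes $H_1,\dots,H_N$ and isolate the contribution of $H_N$ relative to the sub-arrangement $H_1,\dots,H_{N-1}$. Since each region of the sub-arrangement is convex (an intersection of open half-spaces), its intersection with $H_N$ is either empty or a single connected convex set; hence $H_N$ splits an old region into two exactly once per connected piece of $H_N$ in the induced trace arrangement $H_N \cap H_1,\dots,H_N \cap H_{N-1}$. Each non-empty intersection $H_N \cap H_i$ is a $(d-2)$-dimensional affine subspace, i.e.\ a hyperplane inside $H_N \cong \mathbb{R}^{d-1}$, so by induction on $d$ the number of such pieces is bounded by $r(N-1,d-1)$. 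This yields
\begin{equation*}
r(N,d) \le r(N-1,d) + r(N-1,d-1).
\end{equation*}

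Feeding the induction hypothesis into both terms on the right and collecting with Pascal's identity $\binom{N-1}{j}+\binom{N-1}{j-1}=\binom{N}{j}$ then closes the argument:
\begin{align*}
r(N,d) &\le \sum_{j=0}^{d}\binom{N-1}{j} + \sum_{j=0}^{d-1}\binom{N-1}{j}\\
&= \binom{N-1}{0} + \sum_{j=1}^{d}\Bigl[\binom{N-1}{j}+\binom{N-1}{j-1}\Bigr] = \sum_{j=0}^{d}\binom{N}{j}.
\end{align*}

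The main obstacle I anticipate is the precise geometric bookkeeping that justifies the recursion, namely that adding $H_N$ produces exactly one new region per connected piece of $H_N$ in the trace arrangement. The convexity of the $(N-1)$-arrangement regions is what guarantees this one-to-one correspondence and rules out over-counting; degenerate configurations (parallel hyperplanes, higher-order coincidences) can only reduce the number of trace components and therefore still respect the upper bound. Once that combinatorial observation is nailed down, the remainder of the proof is routine binomial manipulation.
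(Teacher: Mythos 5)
Your proof is correct. Note, however, that the paper itself does not prove this lemma at all: it is stated as a citation to Buck's 1943 result (and appears only in the supplementary section on region-counting bounds), so there is no in-paper argument to compare against. What you have written is the classical deletion--restriction induction: bound the number of new regions created by the last hyperplane $H_N$ by the number of regions of the trace arrangement $\{H_N\cap H_i\}_{i<N}$ inside $H_N\cong\mathbb{R}^{d-1}$, obtain the recursion $r(N,d)\le r(N-1,d)+r(N-1,d-1)$, and close with Pascal's identity. The one step you flag as delicate --- that each old region meeting $H_N$ contributes exactly one new region and that these correspond injectively to trace regions --- is handled correctly by your convexity observation: a convex region meeting $H_N$ is split into exactly two pieces, and distinct old regions map to distinct trace regions since each trace region lies in a unique old region. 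Your remark that degeneracies (coincident or parallel hyperplanes) only decrease the counts and so preserve the upper bound is also the right way to dispose of non-generic configurations. The argument is complete and self-contained.
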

This bound was improved in~\cite{DBLP:BoundingCounting} to
\begin{equation}
\label{eq:BoundingCounting}
N(n_0,\dots,n_L)\le \sum_{j_1,\dots,j_L\in J}^{}\prod_{i=1}^L{n_{i}\choose j},
\end{equation}where $J=\left\{ (j_1,\dots,j_L)\in\mathbb{N}^L\vert\;\forall i\in\left\{ 1,\dots,L \right\}:\;j_i\le\min(n_0,n_1-j_1,\dots,n_{L-1}-j_{L-1},n_L)\right\}$. Later, an abstract framework for the construction of such upper bounds was presented in~\cite{hinz2018framework}. It yields bounds formulated as the L1 norm of a product of matrices
\begin{equation}
\label{eq:framework}
N(n_0,\dots,n_L)\le\|B^{(\gamma)}_{n_L}M_{n_{L-1},n_L}\dots B^{(\gamma)}_{n_1}M_{n_{0},n_{1}}e^{(n_{0}+1)}_{n_0+1}\|_1,
\end{equation}
where $M_{n_0,n_1},\dots,M_{n_{L-1},n_L}$ are defined by $M_{a,b}=\left(\delta_{i,\min(j,a+1)}\right)_{i\in\left\{ 1,\dots,b+1 \right\},j\in\left\{ 1,\dots,a+1 \right\}}\in\mathbb{N}^{b+1\times a+1}$ for $a,b\in\mathbb{N}_+$, $e^{(n_{0}+1)}_{n_0+1}=\left(\delta_{n_0+1,i}\right)_{i\in\{1,\dots,n_0+1\}}\in\mathbb{N}^{n_0+1}$ and $B^{(\gamma)}_{a}\in\mathbb{N}^{a+1\times a+1}$ are upper triangular matrices depending on some family of vectors $(\gamma_{ij})_{i,j\in\mathbb{N}}$. The bounds~\eqref{eq:Montufar} and~\eqref{eq:BoundingCounting} can be derived from~\eqref{eq:framework}, see~\cite{hinz2018framework}.

\bibliography{DRLProgramming} 
\bibliographystyle{ieeetr}
\end{document}